\newcommand{\N}{\mathbb N}
\newcommand{\Z}{\mathbb Z}
\newcommand{\Q}{\mathbb Q}
\newcommand{\R}{\mathbb R}
\newcommand{\C}{\mathbb C}
\newcommand{\T}{\mathbb T}
\newcommand{\D}{\mathbb D}
\newcommand{\F}{\mathbb F}
\newcommand{\cL}{\mathcal L}
\newcommand{\Hil}{\mathscr H}
\newcommand{\support}{\operatorname{supp}}
\newcommand{\trace}{\operatorname{Tr}}
\newcommand{\sspan}{\ensuremath{\operatorname{span}}}
\newcommand{\linbeg}{\boldsymbol{\operatorname{B}}}
\newcommand{\traceclass}{\linbeg^1}
\newcommand{\cont}{C}
\newcommand{\cstar}{\ensuremath{C^*}}
\newcommand{\cb}{\mathrm{cb}}
\newcommand{\cpt}{\mathrm c}
\newcommand{\unit}{\boldsymbol 1}
\newcommand{\dd}{\mathrm d}
\newcommand{\GL}{{GL}}
\newcommand{\PGL}{{PGL}}
\newcommand{\ELL}{{L}}
\newcommand{\ip}[2]{\langle#1,#2\rangle}
\newcommand{\seto}[1]{\seta{#1}}
\newcommand{\seta}[1]{\ensuremath{\{#1\}}}
\newcommand{\setw}[2]{\setaw{#1}{#2}}
\newcommand{\setaw}[2]{\seta{#1\::\:#2}}
\newcommand{\Email}{\begingroup \def\UrlLeft{<}\def\UrlRight{>} \urlstyle{tt}\Url}
\newcommand{\mailto}[1]{\href{mailto:#1}{\Email{#1}}}
\newcommand{\contrib}[3]{#1\quad\mailto{#2}{\small\\\quad\textit{#3}}\\[1ex]}
\newcommand{\FA}{A}
\newcommand{\FSA}{B}
\newcommand{\kk}{c}
\newcommand{\eval}{s}
\newcommand{\FF}{(\ast_{m=1}^M\Z/2\Z)\ast(\ast_{n=1}^N\Z)}
\newcommand{\cc}{c}
\newcommand{\G}{G}
\newcommand{\K}{K}
\newcommand{\X}{X}
\newcommand{\Y}{Y}
\newcommand{\GLQ}{\GL_2(\padicnum)}
\newcommand{\PGLQ}{\PGL_2(\padicnum)}
\newcommand{\SLZ}{\GL_2(\padicint)}
\newcommand{\PSLZ}{\PGL_2(\padicint)}
\newcommand{\padicnum}{\Q_q}
\newcommand{\padicnums}{\Q_q^*}
\newcommand{\padicint}{\Z_q}
\newcommand{\padicints}{\Z_q^*}
\newcommand{\MA}{MA}
\newcommand{\MoA}{M_0A}
\newcommand{\PG}{P_\G}
\newcommand{\QG}{Q_\G}
\newcommand{\PX}{P_\X}
\newcommand{\QX}{Q_\X}
\numberwithin{equation}{section}
\theoremstyle{plain}
\newtheorem{theorem}{Theorem}[section]
\newtheorem{maintheorem}[theorem]{Theorem}
\newtheorem{proposition}[theorem]{Proposition}
\newtheorem{lemma}[theorem]{Lemma}
\newtheorem{corollary}[theorem]{Corollary}
\theoremstyle{definition}
\newtheorem{definition}[theorem]{Definition}
\theoremstyle{remark}
\newtheorem{remark}[theorem]{Remark}
\renewcommand{\vec}{\boldsymbol}
\renewcommand{\Re}{\mathrm{Re}}
\renewcommand{\Im}{\mathrm{Im}}
\renewcommand{\contrib}[2]{#1\quad\mailto{#2}}
\begin{document}
	\hyphenation{hil-bert-space fran-ces-co lo-rentz boun-ded func-tions haa-ge-rup pro-per-ties a-me-na-bi-li-ty re-wri-ting pro-per-ty tro-els steen-strup jen-sen de-note as-su-ming Przy-by-szew-ska ma-jo-ri-za-tion o-pe-ra-tors mul-ti-pli-ers i-so-mor-phism la-place bel-tra-mi re-pre-sen-ta-tions pro-po-si-tion the-o-rem lem-ma co-rol-la-ry}

	\title{Schur Multipliers and Spherical Functions on Homogeneous Trees}
\author{Uffe Haagerup\thanks{Partially supported by the Danish Natural Science Research Council.} \and Troels Steenstrup\thanks{Partially supported by the Ph.D.-school OP--ALG--TOP--GEO.} \and Ryszard Szwarc\thanks{Supported by European Commission Marie Curie Host Fellowship for the Transfer of Knowledge ``Harmonic Analysis, Nonlinear Analysis and Probability''  MTKD-CT-2004-013389 and by MNiSW Grant N201 054 32/4285.}}
\date{\today}
\maketitle

	\begin{abstract}
		Let $\X$ be a homogeneous tree of degree $q+1$ ($2\leq q\leq\infty$) and let $\psi:\X\times\X\to\C$ be a function for which $\psi(x,y)$ only depend on the distance between $x,y\in\X$. Our main result gives a necessary and sufficient condition for such a function to be a Schur multiplier on $\X\times\X$. Moreover, we find a closed expression for the Schur norm $\|\psi\|_S$ of $\psi$. As applications, we obtain a closed expression for the completely bounded Fourier multiplier norm $\|\cdot\|_{\MoA(\G)}$ of the radial functions on the free (non-abelian) group $\F_N$ on $N$ generators ($2\leq N\leq\infty$) and of the spherical functions on the p-adic group $\PGLQ$ for every prime number $q$.

	\end{abstract}
	\section*{Introduction}
\label{intro}
Let $\Y$ be a non-empty set. A function $\psi:\Y\times\Y\to\C$ is called a \emph{Schur multiplier} if for every operator $A=(a_{x,y})_{x,y\in\Y}\in\linbeg(\ell^2(\Y))$ the matrix $(\psi(x,y)a_{x,y})_{x,y\in\Y}$ again represents an operator from $\linbeg(\ell^2(\Y))$ (this operator is denoted by $M_\psi A$). If $\psi$ is a Schur multiplier it follows easily from the closed graph theorem that $M_\psi\in\linbeg(\linbeg(\ell^2(\Y)))$, and one referrers to $\|M_\psi\|$ as the \emph{Schur norm} of $\psi$ and denotes it by $\|\psi\|_S$.
The following result, which gives a characterization of the Schur multipliers, is essentially due to Grothendieck, cf.~\cite[Theorem~5.1]{Pis:SimilarityProblemsAndCompletelyBoundedMaps} for a proof.
\begin{proposition}[Grothendieck]
	\label{Grothendieck}
	Let $\Y$ be a non-empty set and assume that $\psi:\Y\times\Y\to\C$ and $k\geq0$ are given, then the following are equivalent:
	\begin{itemize}
		\item [(i)]$\psi$ is a Schur multiplier with $\|\psi\|_S\leq k$.
		\item [(ii)]There exists a Hilbert space $\Hil$ and two bounded maps $P,Q:\Y\to\Hil$ such that
		\begin{equation*}
			\psi(x,y)=\ip{P(x)}{Q(y)}\qquad(x,y\in\Y)
		\end{equation*}
		and
		\begin{equation*}
			\|P\|_\infty\|Q\|_\infty\leq k,
		\end{equation*}
		where
		\begin{equation*}
			\|P\|_\infty=\sup_{x\in\Y}\|P(x)\|\quad\mbox{and}\quad\|Q\|_\infty=\sup_{y\in\Y}\|Q(y)\|.
		\end{equation*}
	\end{itemize}
\end{proposition}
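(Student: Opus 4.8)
The plan is to prove the two implications separately; $(ii)\Rightarrow(i)$ is a direct computation, which we do first. Realising $\Hil$ as $\ell^2(I)$, one has $\psi(x,y)=\ip{P(x)}{Q(y)}=\sum_{i\in I}P(x)_i\,\overline{Q(y)_i}$. For $A=(a_{x,y})\in\linbeg(\ell^2(\Y))$ and finitely supported $\xi,\eta\in\ell^2(\Y)$, substitute this into $\ip{(M_\psi A)\xi}{\eta}=\sum_{x,y}\psi(x,y)a_{x,y}\,\xi(y)\,\overline{\eta(x)}$ and rearrange to obtain $\ip{(M_\psi A)\xi}{\eta}=\sum_{i\in I}\ip{A\xi_i}{\eta_i}$, where $\xi_i(y):=\overline{Q(y)_i}\,\xi(y)$ and $\eta_i(x):=\overline{P(x)_i}\,\eta(x)$. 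Applying the Cauchy--Schwarz inequality to the sum over $i$ and using $\sum_i\|\xi_i\|_2^2=\sum_y\|Q(y)\|^2\,|\xi(y)|^2\le\|Q\|_\infty^2\|\xi\|_2^2$ (and symmetrically for $\eta$), one gets $|\ip{(M_\psi A)\xi}{\eta}|\le\|A\|\,\|P\|_\infty\|Q\|_\infty\,\|\xi\|_2\|\eta\|_2\le k\|A\|\,\|\xi\|_2\|\eta\|_2$. Since finitely supported vectors are dense, $M_\psi A$ extends to a bounded operator with $\|M_\psi A\|\le k\|A\|$; thus $\psi$ is a Schur multiplier and $\|\psi\|_S\le k$.

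For $(i)\Rightarrow(ii)$ I would reduce first to a finite set. For finite $F\subseteq\Y$, put $\psi_F:=\psi|_{F\times F}$; extending operators on $\ell^2(F)$ by zero exhibits $M_{\psi_F}A$ as a corner of $M_\psi(A\oplus0)$, so $\|\psi_F\|_S\le\|\psi\|_S\le k$. Granting the finite case, each $\psi_F$ factors as $\ip{P_F(\cdot)}{Q_F(\cdot)}$ with $P_F,Q_F\colon F\to\Hil_F$ and, after rescaling, $\|P_F\|_\infty,\|Q_F\|_\infty\le\sqrt k$; replacing $Q_F$ by its orthogonal projection onto $\operatorname{span}\{P_F(x):x\in F\}$ one may also take $\dim\Hil_F\le|F|$. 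Fix an ultrafilter $\mathcal U$ on the directed set of finite subsets of $\Y$ with $\{F:F\supseteq F_0\}\in\mathcal U$ for every finite $F_0$, let $\Hil:=\prod_{\mathcal U}\Hil_F$ be the Hilbert space ultraproduct, and set $P(x):=(P_F(x))_{\mathcal U}$, $Q(y):=(Q_F(y))_{\mathcal U}$ (the entries indexed by those $F$ not containing $x$, resp.\ $y$, put to $0$). These are well-defined vectors of $\Hil$, and since $\{F:x,y\in F\}\in\mathcal U$ one has $\ip{P(x)}{Q(y)}=\lim_{\mathcal U}\ip{P_F(x)}{Q_F(y)}=\lim_{\mathcal U}\psi(x,y)=\psi(x,y)$ together with $\|P\|_\infty,\|Q\|_\infty\le\sqrt k$, completing the reduction.

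So let $\Y=\{1,\dots,n\}$ and $\|M_\psi\|\le k$. I would finish by a Hahn--Banach separation. Write $\gamma_2(\phi):=\inf\{\|P\|_\infty\|Q\|_\infty:\phi=\ip{P(\cdot)}{Q(\cdot)}\}$ and let $\mathcal B_k$ be the set of $\phi$ with $\gamma_2(\phi)\le k$; by the dimension bound, $\mathcal B_k$ is the image of the compact set $\{(P,Q):\|P\|_\infty,\|Q\|_\infty\le\sqrt k\}$ under a continuous map, hence compact, and it is convex and balanced. If $\psi\notin\mathcal B_k$, separating $\psi$ from $\mathcal B_k$ in the underlying real vector space (using that $\mathcal B_k$ is balanced to pass from real parts to moduli) produces a matrix $T=(t_{x,y})$ with $|\sum_{x,y}\psi(x,y)t_{x,y}|>\sup_{\phi\in\mathcal B_k}|\sum_{x,y}\phi(x,y)t_{x,y}|=k\,\gamma_2^{*}(T)$, where $\gamma_2^{*}$ is the polar of $\gamma_2$. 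On the other hand, for invertible diagonal matrices $D_u=\operatorname{diag}(u)$, $D_v=\operatorname{diag}(v)$ one checks $\sum_{x,y}\psi(x,y)t_{x,y}=\ip{M_\psi(D_v^{-1}TD_u^{-1})\,u}{\overline{v}}$, hence $|\sum_{x,y}\psi(x,y)t_{x,y}|\le\|M_\psi\|\,\|D_v^{-1}TD_u^{-1}\|\,\|u\|_2\|v\|_2$, and minimising over $u,v$ gives $|\sum_{x,y}\psi(x,y)t_{x,y}|\le\|M_\psi\|\cdot\nu(T)$ with $\nu(T):=\inf_{u,v}\|D_v^{-1}TD_u^{-1}\|\,\|u\|_2\|v\|_2$. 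The crux is then the identity $\nu=\gamma_2^{*}$; granting it, $|\sum\psi(x,y)t_{x,y}|\le k\,\gamma_2^{*}(T)$ contradicts the separation, so $\psi\in\mathcal B_k$, which is exactly (ii) with $\|P\|_\infty\|Q\|_\infty\le k$.

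The identity $\nu=\gamma_2^{*}$ — equivalently, a closed formula for the polar of $\gamma_2$, such as $\gamma_2(\phi)=\sup\{\,\|(\phi(x,y)v_xu_y)_{x,y}\|_1:\|u\|_2,\|v\|_2\le1\,\}$ with $\|\cdot\|_1$ the trace norm — is the step I expect to be the real obstacle: it is the only place where boundedness of $M_\psi$, rather than mere continuity of $\psi$, is genuinely used, and it has the character of a minimax / matrix-scaling statement. An alternative that packages this difficulty as a citation: in the finite-dimensional case a bounded Schur multiplier is automatically completely bounded with $\|M_\psi\|_{\cb}=\|M_\psi\|$ (a result of Smith on maps out of a matrix algebra), so the factorization theorem for completely bounded maps (Wittstock, Haagerup, Paulsen) yields a $*$-representation $\pi$ of $\linbeg(\ell^2(\Y))$ on a Hilbert space $\Hil$ and operators $V,W\colon\ell^2(\Y)\to\Hil$ with $M_\psi(\cdot)=V^{*}\pi(\cdot)W$ and $\|V\|\,\|W\|\le\|M_\psi\|$; since $M_\psi(e_{x,y})=\psi(x,y)e_{x,y}$ and $\pi(e_{x,y})=\pi(e_{1,x})^{*}\pi(e_{1,y})$, one computes $\psi(x,y)=\ip{M_\psi(e_{x,y})e_y}{e_x}=\ip{\pi(e_{1,y})We_y}{\pi(e_{1,x})Ve_x}$, so $P(x):=\pi(e_{1,x})Ve_x$ and $Q(y):=\pi(e_{1,y})We_y$ furnish (ii) with $\|P\|_\infty\le\|V\|$, $\|Q\|_\infty\le\|W\|$ (reading the pairing as $\ip{Q(y)}{P(x)}$, or applying this to $\overline{\psi}$, which has the same Schur norm, and conjugating). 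In practice I would take this second route, since both ingredients are quotable; the Hahn--Banach route is more elementary but pushes the difficulty into the scaling identity $\nu=\gamma_2^{*}$.
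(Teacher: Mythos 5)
The paper does not prove this proposition at all---it is quoted with a reference to Pisier's book---so your argument has to stand on its own. Your direction (ii)$\Rightarrow$(i) is the standard computation and is correct, including the rearrangement (legitimate for finitely supported $\xi,\eta$) and the Cauchy--Schwarz estimate. For (i)$\Rightarrow$(ii) you offer two routes, and they are not on the same footing. The Hahn--Banach route is incomplete by your own admission: the identity $\nu=\gamma_2^{*}$ (equivalently the closed formula for the polar of $\gamma_2$) is precisely the Grothendieck-type content of the proposition, and leaving it as ``the real obstacle'' means this route proves nothing beyond the easy reductions. The second route, which you say you would actually take, is a correct and standard argument: Smith's theorem that a bounded Schur (masa-bimodule) multiplier is automatically completely bounded with $\|M_\psi\|_{\cb}=\|M_\psi\|$, followed by the Wittstock--Haagerup--Paulsen factorization $M_\psi=V^{*}\pi(\cdot)W$, and your extraction of $P(x)=\pi(e_{1,x})Ve_x$, $Q(y)=\pi(e_{1,y})We_y$ is carried out correctly, including the fix for the antilinear slot (passing to $\overline\psi$) and the norm bounds $\|P\|_\infty\|Q\|_\infty\le\|V\|\,\|W\|\le k$. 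Of course this buys completeness at the price of quoting two substantial theorems, whereas the proof the paper points to is self-contained at the level of a Hahn--Banach separation; note also that the paper deduces $\|M_\psi\|_{\cb}=\|M_\psi\|$ as a corollary of the proposition, while you use that equality as input---this is not circular, since Smith's theorem is proved independently, but it reverses the logical order the paper has in mind.

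Two smaller points. First, your reduction to finite $\Y$ plus the ultraproduct gluing is sound (the restriction estimate $\|\psi_F\|_S\le\|\psi\|_S$, the dimension bound, and the recovery of $\psi(x,y)$ along the ultrafilter are all fine), but it is unnecessary for your preferred route: Smith's masa-bimodule theorem and the factorization theorem apply directly to $M_\psi$ on $\linbeg(\ell^2(\Y))$ for arbitrary $\Y$, and the matrix-unit computation goes through verbatim with a fixed base point in place of $1$. Second, the attribution ``Smith's result on maps out of a matrix algebra'' is slightly off: the result you need is Smith's theorem on bounded bimodule maps over a masa (his matrix-algebra result concerns maps \emph{into} $M_n$); the fact you use is true and quotable, but cite the right statement.
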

It follows from (the proof of) the above theorem that $M_\psi$ is completely bounded when $\psi$ is a Schur multiplier and that $\|M_\psi\|_\cb=\|M_\psi\|$.

Let $\X$ be (the vertices of) a \emph{homogeneous tree} of degree $q+1$ for $2\leq q\leq\infty$, i.e., $\X$ consists of the vertices of a connected and cycle-free graph satisfying that each edge is connected to precisely $q+1$ other edges. Let $\dd:\X\times\X\to\N_0$ be the graph distance on $\X$, that is, $\dd(x,y)=1$ if and only if there is an edge connecting $x$ and $y$. Let $x_0$ be a fixed vertex in $\X$ and consider the pair $(\X,x_0)$. If $\varphi:\X\to\C$ is \emph{radial}, i.e., of the form
\begin{equation}
	\label{0.1}
	\varphi(x)=\dot\varphi(\dd(x,x_0))\qquad(x\in\X)
\end{equation}
for some $\dot\varphi:\N_0\to\C$, then we consider the function $\tilde\varphi:\X\times\X\to\C$ given by
\begin{equation}
	\label{0.2}
	\tilde\varphi(x,y)=\dot\varphi(\dd(x,y))\qquad(x,y\in\X).
\end{equation}

The main results of section~\ref{homtree} (Theorem~\ref{maintheorem} and~\ref{maintheoreminf}) are stated in Theorem~\ref{Theorem0.2} below:
\begin{theorem}
	\label{Theorem0.2}
	Let $(\X,x_0)$ be a homogeneous tree of degree $q+1$ ($2\leq q\leq\infty$) with a distinguished vertex $x_0\in\X$. Let $\varphi:\X\to\C$ be a radial function and let $\dot\varphi:\N_0\to\C$ and $\tilde\varphi:\X\times\X\to\C$ be defined as in~\eqref{0.1} and~\eqref{0.2}. Then $\tilde\varphi$ is a Schur multiplier if and only if the Hankel matrix $H=(h_{i,j})_{i,j\in\N_0}$ given by
	\begin{equation*}
		h_{i,j}=\dot\varphi(i+j)-\dot\varphi(i+j+2)\qquad(i,j\in\N_0)
	\end{equation*}
	is of trace class. In this case, the limits
	\begin{equation*}
		\lim_{n\to\infty}\dot\varphi(2n)\quad\mbox{and}\quad\lim_{n\to\infty}\dot\varphi(2n+1)
	\end{equation*}
	exists and the Schur norm of $\tilde\varphi$ is given by
	\begin{equation*}
		\|\tilde\varphi\|_{S}=|c_+|+|c_-|+\left\{
		\begin{array}{lll}
			\|H\|_1 & \mbox{if} & q=\infty\\
			\big(1-\tfrac{1}{q}\big)\|\big(I-\tfrac{\tau}{q}\big)^{-1}H\|_1 & \mbox{if} & q<\infty,
		\end{array}
		\right.
	\end{equation*}
	where
	\begin{equation*}
		c_\pm=\tfrac{1}{2}\lim_{n\to\infty}\dot\varphi(2n)\pm\tfrac{1}{2}\lim_{n\to\infty}\dot\varphi(2n+1)
	\end{equation*}
	and $\tau$ is the operator on the space of trace class operators $\traceclass(\ell^2(\N_0))$ given by
	\begin{equation*}
		\tau(A)=SAS^*\qquad(A\in\traceclass(\ell^2(\N_0))),
	\end{equation*}
	where $S$ is the forward shift on $\ell^2(\N_0)$.
\end{theorem}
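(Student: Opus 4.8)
The plan is to run everything through Grothendieck's characterization (Proposition~\ref{Grothendieck}), so that both implications become statements about representations $\tilde\varphi(x,y)=\ip{P(x)}{Q(y)}$ with $P,Q\colon\X\to\Hil$ bounded. A preliminary step removes the ``boundary part'': the constant kernel $\unit$ and the sign kernel $(x,y)\mapsto(-1)^{\dd(x,y)}$ are Schur multipliers of norm exactly $1$ (for the second one, $\dd(x,y)$ is even iff $x,y$ lie in the same class of the $2$-colouring of the bipartite graph $\X$, so $(-1)^{\dd(x,y)}=\varepsilon(x)\varepsilon(y)$), both leave the Hankel matrix $H$ unchanged, and replacing $\dot\varphi(m)$ by $\dot\varphi_0(m):=\dot\varphi(m)-c_+-(-1)^mc_-$ reduces the problem to the case $c_+=c_-=0$; one checks that the three pieces split orthogonally in the constructions and that two of them have constant norm, so that the $|c_+|+|c_-|$ in the formula comes out exactly in both directions. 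For this reduction one first needs that the two limits exist, which follows from the elementary lemma that $H\in\traceclass(\ell^2(\N_0))$ forces $\sum_m|\gamma_m|<\infty$, where $\gamma_m:=\dot\varphi(m)-\dot\varphi(m+2)=h_{i,j}$ for $i+j=m$: estimating the main diagonals of $H$ and of $HS$ against $\|H\|_1$ gives $\sum_m|\gamma_m|\le 2\|H\|_1$, whence $\dot\varphi(2n)\to c_++c_-$ and $\dot\varphi(2n+1)\to c_+-c_-$ and $\dot\varphi_0(m)=\sum_{j\ge0}\gamma_{m+2j}$.

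For sufficiency and the upper bound I would, with $c_\pm=0$, build an explicit representation supported on the geodesics through $x_0$. Fix a singular value decomposition $h_{i,j}=\sum_\ell s_\ell\,\xi_\ell(i)\overline{\eta_\ell(j)}$ with $(\xi_\ell),(\eta_\ell)$ orthonormal in $\ell^2(\N_0)$ and $\sum_\ell s_\ell=\|H\|_1$, write $|x|=\dd(x,x_0)$ and let $x\wedge y$ be the last common vertex of $[x_0,x]$ and $[x_0,y]$, so that $\dd(x,y)=|x|+|y|-2|x\wedge y|$, and use the telescoping identity
\[
\dot\varphi(\dd(x,y))=\Big(\sum_{j\ge0}h_{|x|+j,\,|y|+j}\Big)+\sum_{p=1}^{|x\wedge y|}h_{|x|-p,\,|y|-p}.
\]
Take $\Hil=\bigoplus_{v\in\X}\cH_v$ with $\cH_{x_0}=\ell^2(\N_0)\,\bar\otimes\,\ell^2(L)$ and $\cH_v=\ell^2(L)$ for $v\ne x_0$, and set $P(x)_{x_0}=(\sqrt{s_\ell}\,\xi_\ell(|x|+j))_{j,\ell}$, $P(x)_v=(\sqrt{s_\ell}\,\xi_\ell(\dd(v,x)))_\ell$ for $v\in[x_0,x]\setminus\{x_0\}$, and $P(x)_v=0$ otherwise, and likewise $Q(y)$ with $\eta_\ell$. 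Since $\ip{P(x)_v}{Q(y)_v}$ vanishes off $[x_0,x\wedge y]$, one recovers exactly the telescoping sum, so $\ip{P(x)}{Q(y)}=\tilde\varphi(x,y)$; and since the arguments of $\xi_\ell$ used along $[x_0,x]$ run over $\N_0$ without repetition, $\|P(x)\|^2=\sum_\ell s_\ell\|\xi_\ell\|^2=\|H\|_1$ for every $x$. Reinstating the $c_\pm$ coordinates gives $\|\tilde\varphi\|_S\le|c_+|+|c_-|+\|H\|_1$, i.e.\ the $q=\infty$ bound. For $q<\infty$ the scheme is refined by using the $q$ outgoing directions at each vertex and spreading the deposit at $v$ along them with weights $q^{-1}$; the effect is to replace $H$ by $K:=(1-\tfrac1q)(I-\tfrac{\tau}q)^{-1}H=(1-\tfrac1q)\sum_{n\ge0}q^{-n}\tau^n(H)$, which lies in $\traceclass(\ell^2(\N_0))$ because $\|\tau\|_{\traceclass\to\traceclass}\le1$ makes $I-\tfrac{\tau}q$ invertible, and then the per-vertex budget totals $(1-\tfrac1q)\|(I-\tfrac{\tau}q)^{-1}H\|_1$. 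Making this refined bookkeeping reproduce $\tilde\varphi$ exactly, with that constant and no more, is the delicate point of this half.

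For necessity and the matching lower bound I would work with an arbitrary representation $\tilde\varphi(x,y)=\ip{P(x)}{Q(y)}$ and, when $q<\infty$, average $(P,Q)$ over the compact (profinite) group $\LG=\mathrm{Aut}(\X,x_0)$ — without increasing $\|P\|_\infty\|Q\|_\infty$ — to arrange covariance $P(gx)=\pi(g)P(x)$, $Q(gx)=\pi(g)Q(x)$ for a unitary representation $\pi$ of $\LG$. Decomposing $\pi$ according to the classification of the irreducible representations of $\mathrm{Aut}(\X,x_0)$ that can occur here — equivalently, via the spherical Plancherel decomposition of the tree, which is where the spherical functions of the title enter — one reads off from the covariant $P,Q$ a factorization $h_{i,j}=\sum_\ell a_\ell(i)\overline{b_\ell(j)}$ with $\sum_\ell\|a_\ell\|\,\|b_\ell\|\le\|P\|_\infty\|Q\|_\infty-|c_+|-|c_-|$; this simultaneously forces $\gamma_m\to0$ (so the two limits exist), forces $H\in\traceclass(\ell^2(\N_0))$, and — because the count of directions at each vertex restricts how the $a_\ell,b_\ell$ can be laid out over the tree — forces $\sum_\ell\|a_\ell\|\,\|b_\ell\|\ge(1-\tfrac1q)\|(I-\tfrac{\tau}q)^{-1}H\|_1$. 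Combined with the previous paragraph this determines $\|\tilde\varphi\|_S$. The case $q=\infty$ then follows from the finite-$q$ statements by restricting $\tilde\varphi$ to a homogeneous subtree of degree $q+1$ and letting $q\to\infty$, using $(1-\tfrac1q)\|(I-\tfrac{\tau}q)^{-1}H\|_1\to\|H\|_1$. I expect the crux of the whole proof to be exactly this extraction step — isolating the constraint that finite degree $q$ imposes on Grothendieck representations of radial kernels and recognising that it is governed by $(1-\tfrac1q)(I-\tfrac{\tau}q)^{-1}$ — together with its constructive mirror, the refined geodesic construction; the telescoping identity, the norm computations, and the behaviour of $\tau$ on $\traceclass$ are routine by comparison.
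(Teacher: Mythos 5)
Your reduction to $c_\pm=0$, the argument that trace-class $H$ forces $\sum_m|\gamma_m|\le2\|H\|_1$ (hence the two limits exist), the telescoping identity, and the geodesic construction for $q=\infty$ are all correct; that last construction is a legitimate variant of the paper's (which places the vectors along the chain $[x,\omega)$ towards a fixed end rather than along $[x_0,x]$ with the tail stored at the root), and your plan to deduce the $q=\infty$ lower bound from the finite-$q$ case by restriction to subtrees is exactly what the paper does. But for finite $q$ the proposal stops precisely where the theorem becomes hard, and you say so yourself: the claim that ``spreading the deposit at $v$ along the $q$ outgoing directions with weights $q^{-1}$'' reproduces $\tilde\varphi$ exactly and converts the budget into $(1-\tfrac1q)\|(I-\tfrac{\tau}{q})^{-1}H\|_1$ is asserted, not proved, and it is not a routine modification: in the paper this step requires the vectors $\delta_x'=(1-\tfrac1q)^{-1/2}(I-UU^*)\delta_x$ attached to an end of the tree, the operators $S_{m,n}$, the identity $\ip{\delta'_{\cc^j(y)}}{\delta'_{\cc^i(x)}}=(S_{m,n})_{i,j}$, and Lemma~\ref{HH} translating $\trace(S^i(S^*)^jH)=\trace(S_{i,j}H')$, so that $\dot\psi(\dd(x,y))=\trace(S_{m,n}H')$ with $H'=(1-\tfrac1q)(I-\tfrac{\tau}{q})^{-1}H$.

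The necessity direction and the matching lower bound are a larger gap. Averaging a Grothendieck pair $(P,Q)$ over $\mathrm{Aut}(\X,x_0)$ to make it covariant is plausible, but everything after that --- ``decomposing $\pi$'', ``reading off'' a factorization $h_{i,j}=\sum_\ell a_\ell(i)\overline{b_\ell(j)}$ with $\sum_\ell\|a_\ell\|\,\|b_\ell\|\le\|P\|_\infty\|Q\|_\infty-|c_+|-|c_-|$, and the claim that counting directions at each vertex forces $\sum_\ell\|a_\ell\|\,\|b_\ell\|\ge(1-\tfrac1q)\|(I-\tfrac{\tau}{q})^{-1}H\|_1$ --- is stated without any mechanism; even the bare implication ``$\tilde\varphi$ Schur multiplier $\Rightarrow$ $H$ trace class'' is not derived, nor is the exact splitting-off of $|c_+|+|c_-|$. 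The paper's route is quite different and is where the actual work lies: one shows that $M_{\tilde\varphi}$ leaves $\cstar(U)$ invariant and acts on $U_{m,n}$ by the scalar $\dot\varphi(m+n)$ (Lemma~\ref{Umn}, Corollary~\ref{cstarU}), composes with the character of $\cstar(U)$ coming from Coburn's theorem to get a functional $f_\varphi$ with $f_\varphi(U_{m,n})=\dot\varphi(m+n)$ and $\|f_\varphi\|\le\|\tilde\varphi\|_S$, and then uses the Wold decomposition (Proposition~\ref{functional}) to write $f_\varphi(U^m(U^*)^n)=\int_\T z^{m-n}\dd\mu(z)+\trace(S^m(S^*)^nT')$ with $\|f_\varphi\|=\|\mu\|+\|T'\|_1$; identifying $\mu=c_+\delta_{+1}+c_-\delta_{-1}$ and $T=(1-\tfrac1q)^{-1}(I-\tfrac{\tau}{q})T'$ with $H$ then yields simultaneously the existence of the limits, the trace-class property of $H$, and the exact lower bound. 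Without a concrete substitute for this functional-analytic extraction (or for the covariance/Plancherel analysis you gesture at), your proposal establishes only the $q=\infty$ upper bound and the easy preliminaries, so as it stands it is an outline whose two crucial halves are missing.
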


In section~\ref{sphfct} we consider \emph{spherical functions} on a homogeneous tree $\X$ of degree $q+1$ ($2\leq q\leq\infty$). For $q<\infty$ the spherical functions can be characterized as the normalized radial eigenfunctions to the \emph{Laplace operator} $L$ (cf.~Definition~\ref{spherical}). Spherical functions have been studied extensively in the literature, cf.~\cite{FTN:HarmonicAnalysisAndRepresentationTheoryForGroupsActingOnHhomogeneousTrees}. Although the Laplace operator is not well defined for $q=\infty$ one can still define spherical functions in this case (cf.~Definition~\ref{sphericalinf}). The main result of section~\ref{sphfct} is the following characterization of the spherical functions $\varphi:\X\to\C$ for which the corresponding function $\tilde\varphi:\X\times\X\to\C$ is a Schur multiplier (cf.~Theorem~\ref{sphSchurNormEigenvalue} and~\ref{sphSchurNorminf}):
\begin{theorem}
	\label{Theorem0.3}
	Let $(\X,x_0)$ be a homogeneous tree of degree $q+1$ ($2\leq q\leq\infty$) with a distinguished vertex $x_0\in\X$. Let $\varphi:\X\to\C$ be a spherical function and let $\tilde\varphi:\X\times\X\to\C$ be the corresponding function as in~\eqref{0.2}. Then $\tilde\varphi$ is a Schur multiplier if and only if the eigenvalue $\eval$ corresponding to $\varphi$ is in the set
	\begin{equation*}
		\setw{\eval\in\C }{ \Re(\eval)^2+\big(\tfrac{q+1}{q-1}\big)^2\Im(\eval)^2<1 }\bigcup\seto{\pm1}.
	\end{equation*}
	The corresponding Schur norm is given by
	\begin{equation*}
		\|\tilde\varphi\|_S=\frac{|1-\eval^2|}{1-\Re(\eval)^2-\left(\frac{q+1}{q-1}\right)^2\Im(\eval)^2}\qquad(\Re(\eval)^2+\big(\tfrac{q+1}{q-1}\big)^2\Im(\eval)^2<1)
	\end{equation*}
	and
	\begin{equation*}
		\|\tilde\varphi\|_S=1\qquad(\eval=\pm1),
	\end{equation*}
	where we set $\frac{q+1}{q-1}$ equal to $1$ when $q=\infty$.
\end{theorem}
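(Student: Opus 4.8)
The plan is to derive Theorem~\ref{Theorem0.3} from Theorem~\ref{Theorem0.2} by substituting the explicit form of a spherical function into the Hankel-matrix criterion and evaluating the resulting trace norm. For $2\le q<\infty$ a spherical function with eigenvalue $\eval$ is the normalised radial solution of $L\varphi=\eval\varphi$; the eigenvalue equation at $x_0$ and at the remaining vertices yields $\dot\varphi(0)=1$, $\dot\varphi(1)=\eval$ and $q\dot\varphi(n+1)=(q+1)\eval\,\dot\varphi(n)-\dot\varphi(n-1)$ for $n\ge1$, whence
\[
  \dot\varphi(n)=A\lambda^{n}+B\mu^{n}\qquad(n\in\N_0),
\]
where $\lambda,\mu$ are the roots of $qt^{2}-(q+1)\eval\,t+1$ (so $\lambda\mu=1/q$ and $\lambda+\mu=(q+1)\eval/q$) and $A,B$ are fixed by $A+B=1$ and $A\lambda+B\mu=\eval$, with the obvious confluent modification when $\lambda=\mu$. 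In the $q=\infty$ case the (separately defined) spherical functions are the $\dot\varphi(n)=\eval^{\,n}$, i.e.\ the case $\mu=0$, $A=1$ of the above.

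Feeding this into Theorem~\ref{Theorem0.2}, and writing $u_\rho:=(1,\rho,\rho^{2},\dots)$, the Hankel matrix becomes
\[
  H=\alpha\,u_\lambda u_\lambda^{\transposed}+\beta\,u_\mu u_\mu^{\transposed},\qquad \alpha=A(1-\lambda^{2}),\quad\beta=B(1-\mu^{2}),
\]
of rank at most $2$. Since $u_\rho\in\ell^{2}(\N_0)$ iff $|\rho|<1$, the matrix $H$ is of trace class precisely when $|\lambda|<1$ and $|\mu|<1$, apart from the degenerate cases $\alpha=0$ or $\beta=0$; a short check shows these occur only at $\eval=\pm1$, where $\dot\varphi(n)=(\pm1)^{n}$ and $H=0$. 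To convert ``$|\lambda|<1$ and $|\mu|<1$'' into the stated region I would run the Schur--Cohn (Jury) test on $p(t)=qt^{2}-(q+1)\eval\,t+1$: the condition that the constant coefficient be smaller in modulus than the leading one reads $1<q$ and is automatic, and the remaining condition reduces to $|q\eval-\bar\eval|<q-1$, i.e.\ $(q-1)^{2}\Re(\eval)^{2}+(q+1)^{2}\Im(\eval)^{2}<(q-1)^{2}$, which is exactly $\Re(\eval)^{2}+\big(\tfrac{q+1}{q-1}\big)^{2}\Im(\eval)^{2}<1$. The same computation shows that $\lim_n\dot\varphi(2n)$ and $\lim_n\dot\varphi(2n+1)$ exist (they vanish on the open region, so $c_+=c_-=0$ there, while at $\eval=\pm1$ they equal $1$ and $\pm1$, giving $|c_+|+|c_-|=1$); this already establishes the characterisation and the value $\|\tilde\varphi\|_S=1$ at $\eval=\pm1$.

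For $\eval$ in the open region it remains to evaluate $\|H\|_1$ (case $q=\infty$) and $\big(1-\tfrac{1}{q}\big)\big\|\big(I-\tfrac{\tau}{q}\big)^{-1}H\big\|_1$ (case $q<\infty$). I would first record the elementary identity that for $v,w\in\ell^{2}$ and scalars $\alpha,\beta$,
\[
  \big\|\alpha\,vv^{\transposed}+\beta\,ww^{\transposed}\big\|_1=\sqrt{(c_{11}+c_{22})^{2}-4\,(\Im c_{12})^{2}}\,,
\]
where $C=(c_{ij})$ is the Gram matrix of $\big(\sqrt\alpha\,v,\sqrt\beta\,w\big)$: a rank-$\le2$ operator written as $XX^{\transposed}$ (with $X=[\sqrt\alpha\,v\mid\sqrt\beta\,w]$) has non-zero singular values equal to the square roots of the eigenvalues of $C\bar C$ with $C=X^{\adjoint}X$, and for a positive $2\times2$ Hermitian matrix $C$ one computes $\trace\sqrt{C\bar C}=\sqrt{\trace(C\bar C)+2\det C}=\sqrt{(c_{11}+c_{22})^{2}-4(\Im c_{12})^{2}}$. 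For $q=\infty$ this is immediate: $H=(1-\eval^{2})\,u_\eval u_\eval^{\transposed}$ is rank one and $\|H\|_1=|1-\eval^{2}|/(1-|\eval|^{2})$, the claimed value. For $q<\infty$ I would compute $G:=\big(I-\tfrac{\tau}{q}\big)^{-1}H=\sum_{k\ge0}q^{-k}S^{k}H(S^{\adjoint})^{k}$ entrywise; using $S^{k}u_\rho=\rho^{-k}u_\rho-\rho^{-k}\sum_{j<k}\rho^{j}e_j$ together with $\lambda\mu=1/q$, the geometric series collapses and
\[
  G_{ij}=\sum_{\rho\in\{\lambda,\mu\}}(\text{coeff})\Big(\rho^{\,i+j}-\rho^{\max(i,j)}(\rho')^{\min(i,j)}\Big),\qquad \rho':=\tfrac{1}{q\rho},
\]
a combination of two rank-one Hankel blocks and two discrete Green's-function kernels $\big(\rho^{\max(i,j)}(\rho')^{\min(i,j)}\big)_{i,j}$, each of trace class since $|\lambda|,|\mu|<1$.

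The main obstacle is the evaluation of $\big(1-\tfrac{1}{q}\big)\|G\|_1$: one must bring $G$ — which, on account of the Green's-function kernels, is not of finite rank, though it is assembled from the two vectors $u_\lambda,u_\mu$ and a lower-triangular summation — into a form whose trace norm can be computed in closed form, and then simplify, using $\lambda\mu=1/q$, $\lambda+\mu=(q+1)\eval/q$, $A+B=1$ and $A\lambda+B\mu=\eval$, all the way down to $|1-\eval^{2}|\big/\big(1-\Re(\eval)^{2}-(\tfrac{q+1}{q-1})^{2}\Im(\eval)^{2}\big)$. Three consistency checks steer and confirm this computation: the answer must degenerate to $|1-\eval^{2}|/(1-|\eval|^{2})$ as $q\to\infty$; it must equal $1$ for every real $\eval\in(-1,1)$, which is forced since there $\tilde\varphi$ is positive definite ($\varphi$ being a principal- or complementary-series spherical function), so $\|\tilde\varphi\|_S=\tilde\varphi(x,x)=1$; and it must equal $1$ at $\eval=\pm1$, as already obtained.
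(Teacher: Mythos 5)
Your reduction to Theorem~\ref{Theorem0.2} is sound, and the first half of the argument is essentially correct: writing $\dot\varphi(n)=A\lambda^n+B\mu^n$ with $\lambda\mu=\tfrac1q$, $\lambda+\mu=\tfrac{q+1}{q}\eval$ is equivalent to the paper's parametrization $\lambda=q^{-z}$, $\mu=q^{z-1}$; the Schur--Cohn computation $|q\eval-\bar\eval|<q-1$ does describe exactly the stated ellipse; and the treatment of $\eval=\pm1$ (where $H=0$, $|c_+|+|c_-|=1$) and of $q=\infty$ (where $H$ is rank one and $\|H\|_1=|1-\eval^2|/(1-|\eval|^2)$) matches the paper. (Two small points you only assert: that no non-degenerate combination $\alpha u_\lambda u_\lambda^{\transposed}+\beta u_\mu u_\mu^{\transposed}$ with a root on or outside the unit circle can be of trace class --- this needs, e.g., the diagonal-summability of trace class matrices, as in the paper's proof of Theorem~\ref{maintheorem} --- and the positive-definiteness of the spherical functions for real $\eval\in(-1,1)$, which you only use as a heuristic check.)

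The genuine gap is the case $2\le q<\infty$ on the open ellipse, which is the heart of the theorem: you explicitly leave ``the evaluation of $(1-\tfrac1q)\|G\|_1$'' as ``the main obstacle'' and replace it by consistency checks, so the norm formula is not proved. Moreover, your structural claim that $G=(I-\tfrac\tau q)^{-1}H$ ``is not of finite rank'' is false, and missing this is exactly what blocks you. Because $\lambda\mu=\tfrac1q$, the series $\sum_{k\ge0}q^{-k}S^kH(S^*)^k$ collapses entrywise via
\begin{equation*}
	\sum_{k=0}^{\min(i,j)}(\lambda\mu)^k\,\frac{\lambda^{\,i+j+1-2k}-\mu^{\,i+j+1-2k}}{\lambda-\mu}
	=\frac{(\lambda^{\,i+1}-\mu^{\,i+1})(\lambda^{\,j+1}-\mu^{\,j+1})}{(\lambda-\mu)^2},
\end{equation*}
so your ``Green's-function'' terms recombine with the Hankel terms into a product of a function of $i$ and a function of $j$: $G$ is rank one, $G=\mathrm{const}\cdot\xi\odot\bar\xi$ with $\xi_n=\frac{\lambda^{n+1}-\mu^{n+1}}{\lambda-\mu}$. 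This is precisely the paper's route (proof of Theorem~\ref{sphSchurNorm}): then $\|G\|_1=|\mathrm{const}|\,\|\xi\|_2^2$, the sum $\sum_n|\xi_n|^2$ is evaluated in closed form by the identity $\sum_{n\ge0}\frac{(a^{n+1}-b^{n+1})(c^{n+1}-d^{n+1})}{(a-b)(c-d)}=\frac{1-abcd}{(1-ac)(1-bd)(1-ad)(1-bc)}$ with $c=\bar a$, $d=\bar b$, and the substitutions $\lambda\mu=\tfrac1q$, $\lambda+\mu=\tfrac{q+1}{q}\eval$ turn the result into $|1-\eval^2|\big/\big(1-\Re(\eval)^2-(\tfrac{q+1}{q-1})^2\Im(\eval)^2\big)$ (this last translation is the content of the paper's Theorem~\ref{sphSchurNormEigenvalue}). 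Without this rank-one collapse and the closed-form summation, your proposal establishes the characterization of the multiplier region but not the norm formula for finite $q$.
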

In section~\ref{integral} we use Theorem~\ref{Theorem0.2} together with a variant of Peller's characterization of Hankel operators of trace class (cf.~\cite[Theorem~1']{Pel:HankelOperatorsOfClassSpAndTheirApplications(rationalApproximationGaussianProcessesTheProblemOfMajorizationOfOperators)}) to obtain an integral representation of radial Schur multipliers on a homogeneous tree of degree $q+1$ ($2\leq q\leq\infty$), cf.~Theorem~\ref{integralthm} and Remark~\ref{2.3.3}.

Let $G$ be a locally compact group. In~\cite{Her:UneGeneralisationDeLaNotionDetransformeeDeFourier-Stieltjes}, Herz introduced a class of functions on $\G$, which was later denoted the class of \emph{Herz--Schur multipliers} on $\G$. By the introduction to~\cite{BF:Herz-SchurMultipliersAndCompletelyBoundedMultipliersOfTheFourierAlgebraOfALocallyCompactGroup}, a continuous function $\varphi:\G\to\C$ is a Herz--Schur multiplier if and only if the function
\begin{equation}
	\label{new0.3}
	\hat\varphi(x,y)=\varphi(y^{-1}x)\qquad(x,y\in\G)
\end{equation}
is a Schur multiplier, and the \emph{Herz--Schur norm} of $\varphi$ is given by
\begin{equation*}
	\|\varphi\|_{HS}=\|\hat\varphi\|_S.
\end{equation*}

In~\cite{DCH:MultipliersOfTheFourierAlgebrasOfSomeSimpleLieGroupsAndTheirDiscreteSubgroups} De Canni{\`e}re and Haagerup introduced the Banach algebra $\MA(G)$ of \emph{Fourier multipliers} of $G$, consisting of functions $\varphi:\G\to\C$ such that
\begin{equation*}
	\varphi\psi\in\FA(\G)\qquad(\psi\in\FA(\G)),
\end{equation*}
where $\FA(\G)$ is the \emph{Fourier algebra} of $\G$ as introduced by Eymard in~\cite{Eym:L'alg`ebreDeFourierD'unGroupeLocalementCompact} (the \emph{Fourier--Stieltjes algebra} $\FSA(\G)$ of $\G$ is also introduced in this paper). The norm of $\varphi$ (denoted \emph{$\|\varphi\|_{\MA(\G)}$}) is given by considering $\varphi$ as an operator on $\FA(\G)$. According to~\cite[Proposition~1.2]{DCH:MultipliersOfTheFourierAlgebrasOfSomeSimpleLieGroupsAndTheirDiscreteSubgroups} a Fourier multiplier of $G$ can also be characterized as a continuous function $\varphi:\G\to\C$ such that
\begin{equation*}
	\lambda(g)\stackrel{M_\varphi}{\mapsto}\varphi(g)\lambda(g)\qquad(g\in\G)
\end{equation*}
extends to a $\sigma$-weakly continuous operator (still denoted $M_\varphi$) on the group von Neumann algebra ($\lambda:\G\to\linbeg(\ELL^2(\G))$ is the \emph{left regular representation} and the group von Neumann algebra is the closure of the span of $\lambda(\G)$ in the weak operator topology). Moreover, one has $\|\varphi\|_{\MA(\G)}=\|M_\varphi\|$. The Banach algebra $\MoA(\G)$ of \emph{completely bounded Fourier multipliers} of $G$ consists of the Fourier multipliers of $G$, $\varphi$, for which $M_\varphi$ is completely bounded. In this case they put \emph{$\|\varphi\|_{\MoA(\G)}=\|M_\varphi\|_{\cb}$}.

In~\cite{BF:Herz-SchurMultipliersAndCompletelyBoundedMultipliersOfTheFourierAlgebraOfALocallyCompactGroup} Bo{\.z}ejko and Fendler show that the completely bounded Fourier multipliers coincide isometrically with the continuous Herz--Schur multipliers. In~\cite{Jol:ACharacterizationOfCompletelyBoundedMultipliersOfFourierAlgebras} Jolissaint gives a short and self-contained proof of the result from~\cite{BF:Herz-SchurMultipliersAndCompletelyBoundedMultipliersOfTheFourierAlgebraOfALocallyCompactGroup} in the form stated below.
\begin{proposition}[\cite{BF:Herz-SchurMultipliersAndCompletelyBoundedMultipliersOfTheFourierAlgebraOfALocallyCompactGroup},~\cite{Jol:ACharacterizationOfCompletelyBoundedMultipliersOfFourierAlgebras}]
	\label{Gilbert0}
	Let $G$ be a locally compact group and assume that $\varphi:G\to\C$ and $k\geq0$ are given, then the following are equivalent:
	\begin{itemize}
		\item [(i)]$\varphi$ is a completely bounded Fourier multiplier of $\G$ with $\|\varphi\|_{\MoA(G)}\leq k$.
		\item [(ii)]$\varphi$ is a continuous Herz--Schur multiplier on $\G$ with $\|\varphi\|_{HS}\leq k$.
		\item [(iii)]There exists a Hilbert space $\Hil$ and two bounded, continuous maps $P,Q:G\to\Hil$ such that
		\begin{equation*}
			\varphi(y^{-1}x)=\ip{P(x)}{Q(y)}\qquad(x,y\in G)
		\end{equation*}
		and
		\begin{equation*}
			\|P\|_\infty\|Q\|_\infty\leq k,
		\end{equation*}
		where
		\begin{equation*}
			\|P\|_\infty=\sup_{x\in G}\|P(x)\|\quad\mbox{and}\quad\|Q\|_\infty=\sup_{y\in G}\|Q(y)\|.
		\end{equation*}
	\end{itemize}
\end{proposition}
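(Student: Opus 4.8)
The plan is to establish the cycle $(iii)\Rightarrow(ii)\Rightarrow(i)\Rightarrow(ii)\Rightarrow(iii)$, letting Proposition~\ref{Grothendieck} carry the weight of the passage between Schur multipliers and factorizations through a Hilbert space. Since $\hat\varphi(x,y)=\varphi(y^{-1}x)$, the equivalence $(ii)\Leftrightarrow(iii)$ is essentially Proposition~\ref{Grothendieck} applied with $\Y=G$: a factorization $\varphi(y^{-1}x)=\ip{P(x)}{Q(y)}$ with $\|P\|_\infty\|Q\|_\infty\le k$ is exactly the assertion that $\hat\varphi$ is a Schur multiplier with $\|\hat\varphi\|_S=\|\varphi\|_{HS}\le k$. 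Two small points need attention: in $(iii)\Rightarrow(ii)$ one must check that $\varphi$ itself is continuous, which follows by putting $y=e$ so that $\varphi(x)=\ip{P(x)}{Q(e)}$; and in $(ii)\Rightarrow(iii)$ Proposition~\ref{Grothendieck} produces only \emph{bounded} maps $P_0,Q_0$, whereas $(iii)$ demands continuous ones.

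To upgrade boundedness to continuity in $(ii)\Rightarrow(iii)$, I would exploit the left invariance $\hat\varphi(gx,gy)=\hat\varphi(x,y)$: for every $s\in G$ the translated maps $x\mapsto P_0(sx)$ and $y\mapsto Q_0(sy)$ factor the same kernel $\varphi(y^{-1}x)$, so, working inside $\ELL^2(G,\mu;\Hil_0)$ for a probability measure $\mu$ and setting $P(x)\colon s\mapsto P_0(sx)$, $Q(y)\colon s\mapsto Q_0(sy)$, one again obtains a factorization of $\hat\varphi$ with the same norm bound. Letting $\mu$ run through an approximate identity at $e$ and using the continuity of $\varphi$, one then replaces these by genuinely norm-continuous maps $P,Q$. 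This regularization is the substance of Jolissaint's self-contained argument, and it is one of the two delicate steps.

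For $(ii)\Leftrightarrow(i)$ the bridge is the observation that, under the identification of $\lambda(g)$ with the (generalized) kernel on $G\times G$ supported on $\{(x,y):y^{-1}x=g\}$, the Schur multiplier $M_{\hat\varphi}$ on $\linbeg(\ELL^2(G))$ restricts on the group von Neumann algebra to the Fourier multiplier map $M_\varphi$, i.e.\ $M_{\hat\varphi}\big(\sum_g a_g\lambda(g)\big)=\sum_g\varphi(g)a_g\lambda(g)$. This immediately gives $\|\varphi\|_{\MoA(G)}=\|M_\varphi\|_\cb\le\|M_{\hat\varphi}\|_\cb=\|\hat\varphi\|_S=\|\varphi\|_{HS}$, so $(ii)\Rightarrow(i)$, and combined with the first paragraph also $(iii)\Rightarrow(ii)\Rightarrow(i)$. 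For the converse inequality $\|\varphi\|_{HS}\le\|\varphi\|_{\MoA(G)}$ one must extend the completely bounded map $M_\varphi$ off the group von Neumann algebra to all of $\linbeg(\ELL^2(G))$ without increasing the cb-norm (Wittstock/Haagerup) and then show the extension can be taken to be $M_{\hat\varphi}$: here Fell's absorption principle — the unitary $(W\xi)(x,y)=\xi(x,x^{-1}y)$ on $\ELL^2(G\times G)$ intertwines $\lambda(g)\otimes\unit$ with $\lambda(g)\otimes\lambda(g)$ — lets one transfer between $M_{\hat\varphi}$ and a compression of $\mathrm{id}\otimes M_\varphi$, and averaging the extension over the right regular representation $\rho(G)$ (which commutes with $M_\varphi$) pins it down to $M_{\hat\varphi}$.

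The two places where real work is needed are thus the continuity upgrade in $(ii)\Rightarrow(iii)$ and, in $(i)\Rightarrow(ii)$, arranging the cb extension of $M_\varphi$ to be \emph{exactly} $M_{\hat\varphi}$ rather than merely some norm-preserving extension; the first is Jolissaint's streamlining and the second is the original Bo\.zejko--Fendler argument. Finally, as remarked after Proposition~\ref{Grothendieck}, all the Hilbert-space factorizations occurring here automatically make the corresponding multipliers completely bounded with $\|\cdot\|_\cb=\|\cdot\|$, so no separate treatment of the completely bounded norm is required.
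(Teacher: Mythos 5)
The paper does not prove Proposition~\ref{Gilbert0} at all --- it is quoted from Bo{\.z}ejko--Fendler and Jolissaint --- so your sketch has to stand on its own, and both of the steps you yourself single out as delicate contain genuine gaps. In (ii)$\Rightarrow$(iii), Proposition~\ref{Grothendieck} produces maps $P_0,Q_0$ that are bounded but carry no measurability whatsoever; hence $s\mapsto P_0(sx)$ need not be a Bochner-measurable $\Hil_0$-valued map, so it need not define an element of $L^2(G,\mu;\Hil_0)$, and the norm-continuity of $x\mapsto P_0(\cdot\,x)$ (a continuity-of-translations-in-mean statement) has no justification. The approximate-identity limit cannot repair this, because the obstruction is measurability, not the choice of $\mu$: if $P_0,Q_0$ were measurable, a single compactly supported probability measure absolutely continuous with respect to Haar measure would already give continuous $P,Q$ and the exact factorization, with no limit needed. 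This is also not what Jolissaint does: his route is (i)$\Rightarrow$(iii) directly, writing the normal completely bounded map as $M_\varphi(T)=W^*\pi(T)V$ for a normal representation $\pi$ and bounded $V,W$ with $\|V\|\|W\|=\|M_\varphi\|_{\cb}$, and then taking, for a fixed unit vector $\chi$, $P(x)=\pi(\lambda(x))V\lambda(x)^*\chi$ and $Q(y)=\pi(\lambda(y))W\lambda(y)^*\chi$, so that $\ip{P(x)}{Q(y)}=\varphi(y^{-1}x)$ and continuity is automatic from strong continuity of $\lambda$ and normality of $\pi$; no regularization of a rough factorization ever occurs.

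The passage between (i) and (ii) is likewise not an ``observation''. The Schur multiplier in (ii) acts on $\linbeg(\ell^2(G))$ with $G$ regarded as a discrete set, whereas $M_\varphi$ lives on the von Neumann algebra generated by $\lambda$ on $\ELL^2(G)$ with Haar measure; for non-discrete $G$ --- and the paper applies this proposition to $\PGLQ$, which is not discrete --- $\lambda(g)$ is not a matrix over the index set $G$, the group von Neumann algebra does not sit inside $\linbeg(\ell^2(G))$, and ``restricting'' $M_{\hat\varphi}$ to it is precisely the content of the theorem, not a remark. (Even for discrete $G$ your displayed identity needs the kernel $\varphi(xy^{-1})$ rather than $\varphi(y^{-1}x)$, or $\rho$ in place of $\lambda$, since the matrix of $\lambda(g)$ is supported on $\{(x,y):xy^{-1}=g\}$.) The honest implementations are either (iii)$\Rightarrow$(i), via $Vf=f\otimes Q(\cdot)$, $Wf=f\otimes P(\cdot)$ and the normal completely bounded map $T\mapsto W^*(T\otimes 1)V$ --- which requires exactly the continuity you have not yet secured --- or Bo{\.z}ejko--Fendler's measurable-kernel machinery. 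Finally, in (i)$\Rightarrow$(ii) the proposed averaging of a Wittstock extension over $\rho(G)$ presupposes an invariant mean on $G$ (unavailable for non-amenable $G$) or a precise substitute, and the claim that the average is exactly $M_{\hat\varphi}$ is unsubstantiated; neither cited proof proceeds this way. In short, the skeleton (Grothendieck for (ii)$\Leftrightarrow$(iii) modulo continuity, plus a Schur/Fourier transfer) is the right one, but the two load-bearing steps are not established by the mechanisms you propose.
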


Consider the (non-abelian) free groups $\F_N$ ($2\leq N\leq\infty$), or more generally, groups of the form
\begin{equation}
	\label{a20.3}
	\Gamma=\FF,
\end{equation}
where $M,N\in\N_0\bigcup\{\infty\}$ and $q=M+2N-1\geq2$. The Cayley graph of $\Gamma$ is a homogeneous tree of degree $q+1$ (cf.~\cite[page~16--18]{FTN:HarmonicAnalysisAndRepresentationTheoryForGroupsActingOnHhomogeneousTrees}) with distinguished vertex $x_0=e$, the identity in $\Gamma$. Spherical functions on finitely generated free groups were introduced in~\cite{FTP:SphericalFunctionsAndHarmonicAnalysisOnFreeGroups}, \cite{FTP:HarmonicAnalysisOnFreeGroups}, and they were later generalized to groups $\Gamma$ of the form~\eqref{a20.3} with $q<\infty$ (cf.~\cite[Ch.~2]{FTN:HarmonicAnalysisAndRepresentationTheoryForGroupsActingOnHhomogeneousTrees}). The spherical functions on $\Gamma$ are simply the spherical functions on the homogeneous tree $(\Gamma,e)$, where we have identified (the vertices of) the Cayley graph with $\Gamma$. In section~\ref{fn} we use Theorem~\ref{Theorem0.2} and~\ref{Theorem0.3} to prove similar results about Fourier multipliers and spherical functions on groups $\Gamma$ of the form~\eqref{a20.3} (cf.~Theorem~\ref{maintheoremgamma} and~\ref{fnthm}). In particular, we obtain from Theorem~\ref{Theorem0.2}:
\begin{theorem}
	\label{Theorem0.5}
	Let $\Gamma$ be a group of the form~\eqref{a20.3} with $2\leq q\leq\infty$. Let $\varphi:\Gamma\to\C$ be a radial function and let $\dot\varphi:\N_0\to\C$ be the function defined by~\eqref{0.1}. Then $\varphi\in\MoA(\Gamma)$ if and only if the Hankel matrix $H=(h_{i,j})_{i,j\in\N_0}$ given by
	\begin{equation*}
		h_{i,j}=\dot\varphi(i+j)-\dot\varphi(i+j+2)\qquad(i,j\in\N_0)
	\end{equation*}
	is of trace class. In this case
	\begin{equation*}
		\|\varphi\|_{\MoA(\Gamma)}=|c_+|+|c_-|+\left\{
		\begin{array}{lll}
			\|H\|_1 & \mbox{if} & q=\infty\\
			\big(1-\tfrac{1}{q}\big)\|\big(I-\tfrac{\tau}{q}\big)^{-1}H\|_1 & \mbox{if} & q<\infty,
		\end{array}
		\right.,
	\end{equation*}
	where $c_\pm$ and $\tau$ are defined as in Theorem~\ref{Theorem0.2}.
\end{theorem}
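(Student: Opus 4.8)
The plan is to deduce Theorem~\ref{Theorem0.5} from Theorem~\ref{Theorem0.2} together with the Bo{\.z}ejko--Fendler/Jolissaint identification of the completely bounded Fourier multipliers recorded in Proposition~\ref{Gilbert0}. Since $\Gamma$ is a discrete group, every function on it is continuous, so Proposition~\ref{Gilbert0} reduces the computation of $\|\varphi\|_{\MoA(\Gamma)}$ to that of the Schur norm of the kernel $\hat\varphi(x,y)=\varphi(y^{-1}x)$ on $\Gamma\times\Gamma$: one has $\varphi\in\MoA(\Gamma)$ precisely when $\hat\varphi$ is a Schur multiplier, and then $\|\varphi\|_{\MoA(\Gamma)}=\|\hat\varphi\|_S$. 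It thus remains to recognize $\hat\varphi$ as an instance of the kernel $\tilde\varphi$ treated in Theorem~\ref{Theorem0.2}.

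First I would recall that the Cayley graph of $\Gamma$ with respect to its standard (symmetric) generating set is a homogeneous tree $\X$ of degree $q+1$ (cf.~\cite[page~16--18]{FTN:HarmonicAnalysisAndRepresentationTheoryForGroupsActingOnHhomogeneousTrees}), and I take $x_0=e$ as distinguished vertex, identifying the vertex set of $\X$ with $\Gamma$. Writing $\dd$ for the graph distance, $\dd(x,y)$ equals the word length of $y^{-1}x$, and the word-length metric is left invariant, whence $\dd(x,y)=\dd(y^{-1}x,e)$ for all $x,y\in\Gamma$. Therefore, if $\varphi$ is radial in the sense of~\eqref{0.1}, i.e.\ $\varphi(x)=\dot\varphi(\dd(x,x_0))$, then
\begin{equation*}
	\hat\varphi(x,y)=\varphi(y^{-1}x)=\dot\varphi(\dd(y^{-1}x,e))=\dot\varphi(\dd(x,y))=\tilde\varphi(x,y)\qquad(x,y\in\Gamma),
\end{equation*}
where $\tilde\varphi$ is precisely the function attached to $(\X,x_0)$ and $\dot\varphi$ by~\eqref{0.2}. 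In particular $\|\hat\varphi\|_S=\|\tilde\varphi\|_S$.

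Then I would invoke Theorem~\ref{Theorem0.2} for the tree $(\X,x_0)=(\Gamma,e)$: it gives that $\tilde\varphi$ is a Schur multiplier if and only if the Hankel matrix $H=(\dot\varphi(i+j)-\dot\varphi(i+j+2))_{i,j\in\N_0}$ is of trace class, that in this case $\lim_n\dot\varphi(2n)$ and $\lim_n\dot\varphi(2n+1)$ exist (so $c_\pm$ is well defined), and that
\begin{equation*}
	\|\tilde\varphi\|_S=|c_+|+|c_-|+
	\begin{cases}
		\|H\|_1 & \text{if }q=\infty,\\
		\big(1-\tfrac{1}{q}\big)\|\big(I-\tfrac{\tau}{q}\big)^{-1}H\|_1 & \text{if }q<\infty,
	\end{cases}
\end{equation*}
with $c_\pm$ and $\tau$ as in Theorem~\ref{Theorem0.2}. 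Combining this with $\|\varphi\|_{\MoA(\Gamma)}=\|\hat\varphi\|_S=\|\tilde\varphi\|_S$ reproduces the assertion of Theorem~\ref{Theorem0.5} verbatim.

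There is no real obstacle here: the proof is a transfer of Theorem~\ref{Theorem0.2} through the isometry between the completely bounded Fourier multiplier norm and the Herz--Schur norm, the discreteness of $\Gamma$ removing the continuity hypotheses in Proposition~\ref{Gilbert0}. The only points needing (routine) attention are the verifications that the standard generating set of $\Gamma$ is symmetric and makes the Cayley graph a $(q+1)$-regular tree, and the bookkeeping when $M$ or $N$ is infinite: in that case $\Gamma$ is still a countable discrete group whose Cayley graph is a homogeneous tree of infinite degree, so Proposition~\ref{Gilbert0} and Theorem~\ref{Theorem0.2} apply without change and the $q=\infty$ branch of the formula is inherited directly.
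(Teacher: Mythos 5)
Your proposal is correct and is essentially the paper's own argument: the paper proves exactly this identification $\hat\varphi=\tilde\varphi$ via left invariance of the word metric in Proposition~\ref{equalnormsF}, combines it with Proposition~\ref{Gilbert0} to get $\|\varphi\|_{\MoA(\Gamma)}=\|\tilde\varphi\|_S$, and then applies Theorems~\ref{maintheorem} and~\ref{maintheoreminf} (i.e.\ Theorem~\ref{Theorem0.2}) to obtain Theorem~\ref{maintheoremgamma}, which is Theorem~\ref{Theorem0.5}. No gaps; your remarks on discreteness removing the continuity hypotheses and on the infinite-degree case match the paper's treatment.
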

Moreover, we use Theorem~\ref{Theorem0.5} to construct radial functions in $\MA(\Gamma)\setminus\MoA(\Gamma)$ for all groups $\Gamma$ of the form~\eqref{a20.3} (cf.~Proposition~\ref{existsradial}). Bo{\.z}ejko proved in~\cite{Boz:RemarkOnHerz-SchurMultipliersOnFreeGroups} that $\MA(\Gamma)\setminus\MoA(\Gamma)\neq\emptyset$ for the non-abelian free groups by constructing a non-radial function in this set.

For a prime number $q$ let $\padicnum$ denote the \emph{p-adic numbers} (corresponding to $q$) and let $\padicnums$ denote the invertible p-adic numbers (the non-zero p-adic numbers). Similarly, let $\padicint$ denote the \emph{p-adic integers} (corresponding to $q$) and let $\padicints$ denote the invertible p-adic integers (the \emph{p-adic units}). Let $\PGLQ$ denote the quotient of $\GLQ$ by its center $\padicnums I$, where $\GLQ$ denotes the $2\times2$ invertible matrices with entries from $\padicnum$. Similarly, let $\PSLZ$ denote the quotient of $\SLZ$ by its center $\padicints I$. One can, according to Serre (cf.~\cite[Chapter~II~\S1]{Ser:ArbresAmalgamesSL2}), interpret the quotient $\PGLQ/\PSLZ$ as a homogeneous tree $\X$ of degree $q+1$ with the range of the unit in $\PGLQ$ by the quotient map as distinguished vertex $x_0$. Moreover, $(\PGLQ,\PSLZ)$ form a Gelfand pair in the sense of~\cite{GV:HarmonicAnalysisOfSphericalFunctionsOnRealReductiveGroups} and there is a one-to-one correspondence between the spherical functions on $\PGLQ$ associated to this Gelfand pair and the spherical functions on the homogeneous tree $(\X,x_0)$ (cf.~Proposition~\ref{sphfctcoinside}). In section~\ref{pgl2qq} we use Theorem~\ref{Theorem0.2} and~\ref{Theorem0.3} to prove similar results for functions on $\PGLQ$ (cf.~Theorem~\ref{2.5.6} and Theorem~\ref{pgl2qqthm}). In particular, we obtain from Theorem~\ref{Theorem0.3}:
\begin{theorem}
	\label{Theorem1.3.1}
	Let $q$ be a prime number and consider the groups $\G=\PGLQ$ and $\K=\PSLZ$ and their quotient $\X=\G/\K$. Let $\varphi$ be a spherical function on the Gelfand pair $(\G,\K)$, then $\varphi$ is a completely bounded Fourier multiplier of $\G$ if and only if the eigenvalue $\eval$ of the corresponding spherical function on $\X$, is in the set
	\begin{equation*}
		\setw{\eval\in\C }{ \Re(\eval)^2+\big(\tfrac{q+1}{q-1}\big)^2\Im(\eval)^2<1 }\bigcup\seto{\pm1}.
	\end{equation*}
	The corresponding norm is given by
	\begin{equation*}
		\|\varphi\|_{\MoA(\G)}= \frac{|1-\eval^2|}{1-\Re(\eval)^2-\left(\frac{q+1}{q-1}\right)^2\Im(\eval)^2}\qquad(\Re(\eval)^2+\big(\tfrac{q+1}{q-1}\big)^2\Im(\eval)^2<1)
	\end{equation*}
	and
	\begin{equation*}
		\|\varphi\|_{\MoA(\G)}=1\qquad(\eval=\pm1).
	\end{equation*}
\end{theorem}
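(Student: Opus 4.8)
The plan is to transport the statement from the $p$-adic group $\G=\PGLQ$ to the homogeneous tree $\X=\G/\K$ and then invoke Theorem~\ref{Theorem0.3}. The bridge is Proposition~\ref{Gilbert0}, which identifies $\MoA(\G)$ isometrically with the continuous Herz--Schur multipliers on $\G$, combined with the observation that for a spherical function the Herz--Schur kernel descends to a radial kernel on the tree. Recall that $\K=\PSLZ$ is open in $\G$ and equals the stabiliser of $x_0$ for the isometric action of $\G$ on $\X$, and that the double cosets $\K\backslash\G/\K$ are parametrised by the distance $\dd(x_0,g\cdot x_0)$. A spherical function $\varphi$ on the Gelfand pair $(\G,\K)$ is continuous and bi-$\K$-invariant, so by Proposition~\ref{sphfctcoinside} it corresponds to a spherical function $\psi:\X\to\C$ on $(\X,x_0)$ with the \emph{same} eigenvalue $\eval$; explicitly $\dot\psi(n)=\varphi(g)$ whenever $\dd(x_0,g\cdot x_0)=n$.

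Next I would identify the Herz--Schur kernel. Let $\pi:\G\to\X$, $\pi(g)=g\cdot x_0$, be the quotient map, which is continuous since $\K$ is open. Using bi-$\K$-invariance, $\varphi\big((yk')^{-1}(xk)\big)=\varphi\big(k'^{-1}(y^{-1}x)k\big)=\varphi(y^{-1}x)$ for $k,k'\in\K$, so the kernel $\hat\varphi(x,y)=\varphi(y^{-1}x)$ is constant on $\K$-cosets in each variable and therefore descends to a function on $\X\times\X$. Since $\dd\big(x_0,(y^{-1}x)\cdot x_0\big)=\dd\big(y\cdot x_0,x\cdot x_0\big)$, this descended function is exactly the radial kernel $\tilde\psi$ attached to $\psi$ as in~\eqref{0.2}; in other words $\hat\varphi=\tilde\psi\circ(\pi\times\pi)$.

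Then I would show the norms agree: $\|\varphi\|_{\MoA(\G)}=\|\tilde\psi\|_S$. For the inequality $\leq$, take a factorisation $\tilde\psi(a,b)=\ip{P_0(a)}{Q_0(b)}$ over a Hilbert space $\Hil$ as in Proposition~\ref{Grothendieck}; then $P=P_0\circ\pi$ and $Q=Q_0\circ\pi$ are bounded and continuous on $\G$, satisfy $\varphi(y^{-1}x)=\ip{P(x)}{Q(y)}$, and obey $\|P\|_\infty\|Q\|_\infty=\|P_0\|_\infty\|Q_0\|_\infty$, so Proposition~\ref{Gilbert0} gives $\|\varphi\|_{\MoA(\G)}\leq\|P_0\|_\infty\|Q_0\|_\infty$, and the infimum yields $\leq\|\tilde\psi\|_S$. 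Conversely, if $\varphi\in\MoA(\G)$, Proposition~\ref{Gilbert0} provides bounded continuous $P,Q:\G\to\Hil$ with $\varphi(y^{-1}x)=\ip{P(x)}{Q(y)}$; choosing any section $\sigma:\X\to\G$ of $\pi$ and setting $P_0=P\circ\sigma$, $Q_0=Q\circ\sigma$ gives $\tilde\psi(a,b)=\varphi\big(\sigma(b)^{-1}\sigma(a)\big)=\ip{P_0(a)}{Q_0(b)}$, whence $\|\tilde\psi\|_S\leq\|P\|_\infty\|Q\|_\infty$ and so $\|\tilde\psi\|_S\leq\|\varphi\|_{\MoA(\G)}$. (When $\varphi\notin\MoA(\G)$ both quantities are $+\infty$, which is consistent with Theorem~\ref{Theorem0.3}.)

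Finally, applying Theorem~\ref{Theorem0.3} to the tree spherical function $\psi$ with eigenvalue $\eval$ converts the identity $\|\varphi\|_{\MoA(\G)}=\|\tilde\psi\|_S$ into exactly the asserted membership criterion for $\eval$ and the two norm formulas. I do not expect any genuine analytic obstacle here, since all the hard work resides in Theorem~\ref{Theorem0.3}; the only points that need care are bookkeeping: verifying that the continuity clause of Proposition~\ref{Gilbert0} survives pull-back along $\pi$ (immediate, as $\pi$ is continuous) and restriction along a section (immediate, as $\X$ is discrete so $P_0,Q_0$ are automatically continuous), and confirming via Proposition~\ref{sphfctcoinside} that the eigenvalue attached to $\varphi$ is precisely the eigenvalue of the associated tree spherical function appearing in Theorem~\ref{Theorem0.3}.
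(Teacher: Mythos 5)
Your proposal is correct and follows essentially the same route as the paper: the kernel-descent identity is the paper's Lemma~\ref{BF*}, the norm equality $\|\varphi\|_{\MoA(\G)}=\|\tilde\psi\|_S$ via pull-back along the quotient map and restriction along a cross section (using Propositions~\ref{Grothendieck} and~\ref{Gilbert0}) is exactly the paper's Proposition~\ref{equalnorms}, and the conclusion is then drawn from Proposition~\ref{sphfctcoinside} together with Theorem~\ref{sphSchurNormEigenvalue}. No gaps to report.
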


The present paper originates from an unpublished manuscript~\cite{HS:ManuscriptAboutRadialCompletelyBoundedMultipliersOnFreeGroups} from 1987 written by two of the authors of this paper. Thanks to the third author, the manuscript has now been largely extended in order to cover radial functions on homogeneous trees of arbitrary degree $q+1$ ($2\leq q\leq\infty$) as well as applications to the p-adic groups $\PGLQ$ for a prime number $q$. The original manuscript focussed on radial functions on the free groups $\F_N=\ast_{n=1}^N\Z$ ($2\leq N\leq\infty$). In particular, Theorem~\ref{Theorem0.5} was proved in~\cite{HS:ManuscriptAboutRadialCompletelyBoundedMultipliersOnFreeGroups} for the case $\Gamma=\F_N$. A few months after~\cite{HS:ManuscriptAboutRadialCompletelyBoundedMultipliersOnFreeGroups} was written, Bo{\.z}ejko included the proof of Theorem~\ref{Theorem0.5} in the case $\Gamma=\F_N$ in a set of (unpublished) lecture notes from Heidelberg University, cf.~\cite{Boz:PositiveAndNegativeDefiniteKernelsOnDiscreteGroups}. Later, Wysocza{\'n}ski obtained in~\cite{Wys:ACharacterizationOfRadialHerz-SchurMultipliersOnFreeProductsOfDiscreteGroups} a similar characterization of the radial Herz--Schur multipliers on a free product $\Gamma=\Gamma_1\ast\cdots\ast\Gamma_N$ ($2\leq N<\infty$) of $N$ groups of the same cardinality $k$ ($2\leq k\leq\infty$). The length function used in~\cite{Wys:ACharacterizationOfRadialHerz-SchurMultipliersOnFreeProductsOfDiscreteGroups} is the so-called block length of a reduced word in $\Gamma$.

	\section{Radial Schur multipliers on homogeneous trees}
\label{homtree}
Let $\X$ be (the vertices of) a homogeneous tree of degree $q+1$ for $2\leq q\leq\infty$, and consider the pair $(\X,x_0)$ where $x_0$ is a distinguished vertex in $\X$.
\begin{proposition}
	\label{radial}
	There is a bijective correspondence between the following types of functions:
	\begin{itemize}
		\item [(i)]$\dot\varphi:\N_0\to\C$.
		\item [(ii)]$\varphi:\X\to\C$ of the form
		\begin{equation*}
			\varphi(x)=\dot\varphi(\dd(x,x_0))\qquad(x\in\X)
		\end{equation*}
		for some $\dot\varphi:\N_0\to\C$.
		\item [(iii)]$\tilde\varphi:\X\times\X\to\C$ of the form
		\begin{equation*}
			\tilde\varphi(x,y)=\dot\varphi(\dd(x,y))\qquad(x,y\in\X)
		\end{equation*}
	for some $\dot\varphi:\N_0\to\C$.
	\end{itemize}
\end{proposition}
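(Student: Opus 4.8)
The plan is to exhibit both assignments $\dot\varphi\mapsto\varphi$ of~\eqref{0.1} and $\dot\varphi\mapsto\tilde\varphi$ of~\eqref{0.2} as bijections from the functions in (i) onto the functions in (ii) and (iii) respectively; composing one with the inverse of the other then produces the bijection between (ii) and (iii). Since (ii) and (iii) are \emph{defined} to be exactly the ranges of these two assignments, surjectivity is automatic, so the entire statement reduces to proving that each assignment is injective.

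The only geometric input required is the claim that for every $n\in\N_0$ there is a vertex $x\in\X$ with $\dd(x,x_0)=n$. I would prove this by induction on $n$: for $n=0$ take $x=x_0$, and if $\dd(x,x_0)=n$ then, because $\X$ is homogeneous and hence every vertex has at least two neighbours, $x$ has a neighbour $x'$ not lying on the unique geodesic from $x$ to $x_0$; since $\X$ is a tree this forces $\dd(x',x_0)=n+1$.

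Granting this, injectivity is immediate. If $\dot\varphi_1$ and $\dot\varphi_2$ induce the same function $\varphi$ via~\eqref{0.1}, then for each $n$ we choose a vertex $x$ with $\dd(x,x_0)=n$ and read off $\dot\varphi_1(n)=\varphi(x)=\dot\varphi_2(n)$, so $\dot\varphi_1=\dot\varphi_2$; applying the same argument to $y\mapsto\tilde\varphi(\cdot\,,x_0)$ handles~\eqref{0.2}. The inverse maps are then given explicitly by $\dot\varphi(n)=\varphi(x)$ for any $x$ with $\dd(x,x_0)=n$ (well defined precisely because $\varphi$ is of the form~\eqref{0.1}) and by $\dot\varphi(n)=\tilde\varphi(x,x_0)$ for any such $x$. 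Finally, the resulting bijection between (ii) and (iii) sends a radial $\varphi$ with associated $\dot\varphi$ to $\tilde\varphi(x,y)=\dot\varphi(\dd(x,y))$, with inverse given by restriction of the second variable to $x_0$, and these are mutually inverse since $x\mapsto\dd(x,x_0)$ attains every value in $\N_0$.

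There is no genuine obstacle here: the only point that deserves attention is the existence of vertices at every prescribed distance from $x_0$, which is exactly where homogeneity of the tree (equivalently, minimal degree at least two together with the absence of cycles) is used. Everything else is routine verification that the three assignments are mutually inverse.
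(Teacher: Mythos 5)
Your proposal is correct; the paper itself dismisses this proposition with ``This is obvious,'' and your argument is precisely the routine verification behind that remark, the only substantive point being that every $n\in\N_0$ occurs as $\dd(x,x_0)$ for some $x\in\X$, which your induction using degree at least two and the absence of cycles establishes correctly. Nothing further is needed.
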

\begin{proof}
	This is obvious.
\end{proof}
A function of the type {\it (ii)} from Proposition~\ref{radial} is refereed to as a \emph{radial} function.

Let $S$ be the \emph{forward shift} on $\ell^2(\N_0)$, i.e.,
\begin{equation*}
	Se_n=e_{n+1}\qquad(n\in\N_0),
\end{equation*}
where $(e_n)_{n\in\N_0}$ is the canonical basis of $\ell^2(\N_0)$. Recall that $S^*S$ is the identity operator $I$ on $\ell^2(\N_0)$ and $SS^*$ is the projection on $\{e_0\}^\perp$.

Denote by $\|\cdot\|_1$ the norm on the \emph{trace class operators} $\traceclass(\ell^2(\N_0))$, i.e.,
\begin{equation*}
	\|T\|_1=\trace(|T|)=\sum_{n=0}^\infty\ip{|T|e_n}{e_n}
\end{equation*}
for any $T\in\linbeg(\ell^2(\N_0))$ for which this is finite.

Let $\tau\in\linbeg(\linbeg(\ell^2(\N_0)))$ be given by
\begin{equation}
	\label{tau}
	\tau(A)=SAS^*\qquad(A\in\linbeg(\ell^2(\N_0))).
\end{equation}
Obviously, $\tau$ is an isometry on the bounded operators. The following argument shows that $\tau$ is also an isometry on the trace class operators. If $T$ is a trace class operator on $\ell^2(\N_0)$ and $T=U|T|$ is the polar decomposition of $T$, then $\tau(T)=SUS^*S|T|S^*$ is the polar decomposition of $\tau(T)$, from which it follows that $\|\tau(T)\|_1=\trace(S|T|S^*)=\trace(|T|)=\|T\|_1$. This leads us to defining
\begin{equation*}
	\Big(I-\frac{\tau}{\alpha}\Big)^{-1}A=\sum_{n=0}^\infty \frac{\tau^n(A)}{\alpha^n}\qquad(\alpha>1,\,A\in\traceclass(\ell^2(\N_0))),
\end{equation*}
from which we see that $\big(I-\frac{\tau}{\alpha}\big)^{-1}$ makes sense as an element of $\linbeg(\traceclass(\ell^2(\N_0)))$, and its norm is bounded by $(1-\tfrac{1}{\alpha})^{-1}$.

Assume for now that $2\leq q<\infty$. For $m,n\in\N_0$ put
\begin{eqnarray*}
	S_{m,n} & = & \big(1-\tfrac{1}{q}\big)^{-1}(S^m(S^*)^n-\tfrac{1}{q}S^*S^m(S^*)^nS)\\
	& = & \left\{
	\begin{array}{lll}
		\big(1-\tfrac{1}{q}\big)^{-1}(S^m(S^*)^n-\tfrac{1}{q}S^{m-1}(S^*)^{n-1}) & \mbox{if} & m,n\geq1\\
		S^m(S^*)^n & \mbox{if} & \min\seto{m,n}=0
	\end{array}
	\right.
\end{eqnarray*}
Note that
\begin{equation*}
	S^m(S^*)^n=S_{m,n}\qquad(\min\seto{m,n}=0)
\end{equation*}
and
\begin{equation*}
	S^m(S^*)^n=\big(1-\tfrac{1}{q}\big)S_{m,n}+\tfrac{1}{q}S^{m-1}(S^*)^{n-1}\qquad(m,n\geq1).
\end{equation*}
Hence it follows by induction in $\min\seto{m,n}$ that $S^m(S^*)^n\in\sspan\setw{S_{k,l} }{ k,l\in\N_0}$ for all $m,n\in\N_0$. Since $\cstar(S)$ is the closed linear span of $\setw{S^m(S^*)^n }{ m,n\in\N_0}$ we also have
\begin{equation}
	\label{Sspan}
	\cstar(S)=\overline{\sspan}\setw{S_{m,n} }{ m,n\in\N_0}.
\end{equation}
\begin{lemma}
	\label{HH}
	Let $T,T'\in\linbeg(\ell^2(\N_0))$ be related by
	\begin{equation*}
		T'=\big(1-\tfrac{1}{q}\big)\big(I-\tfrac{\tau}{q}\big)^{-1}T.
	\end{equation*}
	Assume that one, and hence both, matrices are of trace class, then
	\begin{equation*}
		\trace(S^i(S^*)^jT)=\trace(S_{i,j}T')\qquad(i,j\in\N_0).
	\end{equation*}
\end{lemma}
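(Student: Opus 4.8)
The plan is to rewrite the relation between $T$ and $T'$ in the reverse direction and then reduce the claimed trace identity to the cyclicity of the trace. First I would record that the two trace-class hypotheses are indeed equivalent: if $T$ is of trace class, then since $\tau$ is an isometry of $\traceclass(\ell^2(\N_0))$ the series $T'=\big(1-\tfrac1q\big)\sum_{n\ge0}q^{-n}\tau^n(T)$ converges in trace norm and so $T'$ is of trace class; conversely, if $T'$ is of trace class, then $T=\big(1-\tfrac1q\big)^{-1}\big(T'-\tfrac1q\,ST'S^*\big)$ (see below) is of trace class because $\traceclass(\ell^2(\N_0))$ is an ideal in $\linbeg(\ell^2(\N_0))$. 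This is exactly the content of the discussion of $\big(I-\tfrac{\tau}{\alpha}\big)^{-1}\in\linbeg(\traceclass(\ell^2(\N_0)))$ preceding the lemma.

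So assume both $T$ and $T'$ are of trace class. Applying $I-\tfrac{\tau}{q}$ to both sides of $T'=\big(1-\tfrac1q\big)\big(I-\tfrac{\tau}{q}\big)^{-1}T$ and using $\tau(T')=ST'S^*$ gives $\big(1-\tfrac1q\big)T=T'-\tfrac1q\,ST'S^*$, that is, $T=\big(1-\tfrac1q\big)^{-1}\big(T'-\tfrac1q\,ST'S^*\big)$. Fix $i,j\in\N_0$; multiplying this identity on the left by $S^i(S^*)^j$ and taking traces — legitimate since $T'$, and hence $ST'S^*$, is of trace class while $S^i(S^*)^j$ is bounded — yields
\begin{equation*}
	\trace\big(S^i(S^*)^jT\big)=\big(1-\tfrac1q\big)^{-1}\Big(\trace\big(S^i(S^*)^jT'\big)-\tfrac1q\trace\big(S^i(S^*)^j S T'S^*\big)\Big).
\end{equation*}

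The last step is to bring the second trace into the required shape by cyclicity of the trace, which is valid here because the relevant operator is a product of bounded operators with the trace-class operator $T'$: $\trace\big(S^i(S^*)^jS T'S^*\big)=\trace\big(S^*S^i(S^*)^jS\,T'\big)$. Substituting this back in,
\begin{equation*}
	\trace\big(S^i(S^*)^jT\big)=\trace\Big(\big(1-\tfrac1q\big)^{-1}\big(S^i(S^*)^j-\tfrac1q\,S^*S^i(S^*)^jS\big)T'\Big)=\trace\big(S_{i,j}T'\big),
\end{equation*}
by the very definition of $S_{i,j}$. There is no real obstacle beyond bookkeeping: the only points needing care are that every trace manipulation acts on genuinely trace-class operators (which holds since $S$ and $S^*$ are contractions and $T'$ is of trace class) and the equivalence of the two trace-class hypotheses, both of which follow from facts already in the text.
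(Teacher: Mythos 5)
Your proposal is correct and follows essentially the same route as the paper: invert the relation to get $\big(1-\tfrac1q\big)T=T'-\tfrac1q\,ST'S^*$, use cyclicity of the trace (bounded times trace class) to move $S$ and $S^*$ next to $S^i(S^*)^j$, and recognize the resulting operator as $S_{i,j}$ by definition. The extra remarks on the equivalence of the two trace-class hypotheses are fine and consistent with the discussion preceding the lemma in the paper.
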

\begin{proof}
	For $i,j\in\N_0$ we have that
	\begin{eqnarray*}
		\trace(S^i(S^*)^jT) & = & \big(1-\tfrac{1}{q}\big)^{-1}\trace(S^i(S^*)^jT'-\tfrac{1}{q}S^i(S^*)^j\tau(T'))\\
		& = & \big(1-\tfrac{1}{q}\big)^{-1}\trace(S^i(S^*)^jT'-\tfrac{1}{q}S^i(S^*)^jST'S^*)\\
		& = & \big(1-\tfrac{1}{q}\big)^{-1}\trace((S^i(S^*)^j-\tfrac{1}{q}S^*S^i(S^*)^jS)T')\\
		& = & \trace(S_{i,j}T'),
	\end{eqnarray*}
	which finishes the proof.
\end{proof}
\begin{maintheorem}
	\label{maintheorem}
	Let $(\X,x_0)$ be a homogeneous tree of degree $q+1$ ($2\leq q<\infty$) with a distinguished vertex $x_0\in\X$. Let $\varphi:\X\to\C$ be a radial function and let $\dot\varphi:\N_0\to\C$ and $\tilde\varphi:\X\times\X\to\C$ be the corresponding functions as in Proposition~\ref{radial}. Finally, let $H=(h_{i,j})_{i,j\in\N_0}$ be the Hankel matrix given by $h_{i,j}=\dot\varphi(i+j)-\dot\varphi(i+j+2)$ for $i,j\in\N_0$. Then the following are equivalent:
	\begin{itemize}
		\item [(i)]$\tilde\varphi$ is a Schur multiplier.
		\item [(ii)]$H$ is of trace class.
	\end{itemize}
	If these two equivalent conditions are satisfied, then there exists unique constants $c_\pm\in\C$ and a unique $\dot\psi:\N_0\to\C$ such that
	\begin{equation*}
		\dot\varphi(n)=c_++c_-(-1)^n+\dot\psi(n)\qquad(n\in\N_0)
	\end{equation*}
	and
	\begin{equation*}
		\lim_{n\to\infty}\dot\psi(n)=0.
	\end{equation*}
	Moreover,
	\begin{equation*}
		\|\tilde\varphi\|_S=|c_+|+|c_-|+\big(1-\tfrac{1}{q}\big)\|\big(I-\tfrac{\tau}{q}\big)^{-1}H\|_1,
	\end{equation*}
	where $\tau$ is the shift operator defined by~\eqref{tau}.
\end{maintheorem}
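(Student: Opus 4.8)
The plan is to prove the equivalence (i)$\Leftrightarrow$(ii) and the norm identity by combining Grothendieck's criterion (Proposition~\ref{Grothendieck}) with the geometry of geodesics in $\X$ and trace--class duality, thinking of the statement as an isometric description of radial Schur multipliers in terms of the triple $(c_+,c_-,H')$, where $H':=(1-\tfrac1q)\big(I-\tfrac{\tau}{q}\big)^{-1}H$. First, assuming (ii), I extract the decomposition. Put $\eta(m):=\dot\varphi(m)-\dot\varphi(m+2)$, so $h_{i,j}=\eta(i+j)$; then $\eta(2m)=\ip{He_m}{e_m}$ and $\eta(2m+1)=\ip{HSe_m}{e_m}$, and since $\sum_n|\ip{Tf_n}{f_n}|\le\|T\|_1$ for every orthonormal sequence $(f_n)$, one gets $\sum_{m\ge0}|\eta(m)|\le2\|H\|_1<\infty$. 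Hence $\dot\psi(n):=\sum_{l\ge0}\eta(n+2l)$ converges absolutely, tends to $0$, and $(\dot\varphi-\dot\psi)(n)=(\dot\varphi-\dot\psi)(n+2)$; writing the two values of this $2$-periodic sequence as $c_++c_-$ and $c_+-c_-$ yields the representation, unique because $n\mapsto c_++c_-(-1)^n$ is determined by $\lim_n\dot\varphi(2n)$ and $\lim_n\dot\varphi(2n+1)$. Moreover $\mathbf 1$ and $(x,y)\mapsto(-1)^{\dd(x,y)}$ are Schur multipliers of norm $\le1$ — the second because $\X$ is bipartite, so $(-1)^{\dd(x,y)}=\varepsilon(x)\varepsilon(y)$ with $\varepsilon(x)=(-1)^{\dd(x,x_0)}$ — so by the triangle inequality for $\|\cdot\|_S$ the upper bound in the theorem reduces to proving $\|\tilde\psi\|_S\le\|H'\|_1$ for the radial kernel $\tilde\psi(x,y)=\dot\psi(\dd(x,y))$.

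For the direction (ii)$\Rightarrow$(i) and ``$\le$'', I factor $H'=CD$ with $C,D$ Hilbert--Schmidt and $\|C\|_2\|D\|_2=\|H'\|_1$ ($H'$ is of trace class by the remarks preceding Lemma~\ref{HH}). The geometric facts used are: for $x,y\in\X$ every vertex $w$ satisfies $\dd(x,w)+\dd(y,w)=\dd(x,y)+2\,\dd(w,[x,y])$, where $[x,y]$ is the geodesic segment; and $\ip{H'e_b}{e_a}=(1-\tfrac1q)\sum_{r=0}^{\min\{a,b\}}q^{-r}\eta(a+b-2r)$, which follows from $(I-\tfrac{\tau}{q})^{-1}=\sum_{r\ge0}q^{-r}\tau^r$ and $\tau^r(H)=S^rH(S^*)^r$. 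The plan is to define, in $\Hil=\ell^2(\X)\otimes\ell^2(\N_0)$, maps of the form
\[ P(x)=\sum_{w\in\X}\mu\big(\dd(x_0,w)\big)\,e_w\otimes D\,e_{\dd(x,w)},\qquad Q(y)=\sum_{w\in\X}\mu\big(\dd(x_0,w)\big)\,e_w\otimes C^{*}e_{\dd(y,w)}, \]
where $\mu(\cdot)$ is a fixed, geometrically decaying scalar weight chosen so that (a) the Hilbert--Schmidt property of $C,D$ together with the exponential growth of the spheres in $\X$ forces $\|P\|_\infty\le\|D\|_2$ and $\|Q\|_\infty\le\|C\|_2$, and (b) in $\ip{P(x)}{Q(y)}$ the resulting weighted sum over $w\in\X$ of $\ip{H'e_{\dd(x,w)}}{e_{\dd(y,w)}}$ collapses — via the two displayed facts and the telescoping identity $\dot\psi(n)=\sum_{l\ge0}\eta(n+2l)$ — exactly to $\dot\psi(\dd(x,y))$. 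Here the scalar $(1-\tfrac1q)$ and the resolvent $(I-\tfrac{\tau}{q})^{-1}$ are precisely what convert the $q$-fold branching of $\X$ at each vertex off the segment $[x,y]$ into the correct scalar series, so that the bound obtained is $\|H'\|_1$ and not merely a multiple of it. Together with the first paragraph this gives $\|\tilde\varphi\|_S\le|c_+|+|c_-|+\|H'\|_1$.

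For (i)$\Rightarrow$(ii) and ``$\ge$'', assume $\tilde\varphi$ is a Schur multiplier, $\|\tilde\varphi\|_S=k<\infty$, and fix a Grothendieck factorization $\tilde\varphi(x,y)=\ip{P(x)}{Q(y)}$ with $\|P\|_\infty\|Q\|_\infty=k$. Since $|\dot\varphi(n)|=|\tilde\varphi(x,y)|\le k$ for $\dd(x,y)=n$, the truncated Hankel matrices $H_n=(h_{i,j})_{i,j\le n}$ are finite, hence of trace class, and so are $H'_n:=(1-\tfrac1q)(I-\tfrac{\tau}{q})^{-1}H_n$. By trace--class duality, $\|H'_n\|_1=\sup\{|\trace(H'_nK)|:K$ finite rank, $\|K\|\le1\}$; using~\eqref{Sspan} it suffices to take $K=\sum_{i,j}\beta_{i,j}S_{i,j}$ (supported on coordinates $\le n$) with $\|K\|\le1$, and then Lemma~\ref{HH} rewrites $\trace(H'_nK)=\sum_{i,j}\beta_{i,j}\trace\big(S^i(S^*)^jH_n\big)$ as an explicit finite combination of differences of $\dot\varphi$-values. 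The plan is to realize each such $K$ geometrically: using a fixed geodesic ray in $\X$ and the $q$-ary branching along it, build from $K$ a finite-rank operator $T_K$ on $\ell^2(\X)$ with $\|T_K\|_1\le1$ whose Schur--Hadamard pairing with $\tilde\varphi$, namely $\sum_{x,y}\tilde\varphi(x,y)\overline{(T_K)_{x,y}}$, reproduces $\trace(H'_nK)$ together with phase-adjustable multiples of $c_+$ and $c_-$ coming from pairs $(x,y)$ at large distance (where $\tilde\varphi(x,y)\approx c_++c_-(-1)^{\dd(x,y)}$). Then $|\trace(H'_nK)|\le k\,\|T_K\|_1\le k$, so $\|H'_n\|_1\le k$ uniformly in $n$; since $(H'_n)_{a,b}=(H')_{a,b}$ for $n\ge\max\{a,b\}$, passing to the weak-$*$ limit gives that $H'$, and hence $H=(1-\tfrac1q)^{-1}(I-\tfrac{\tau}{q})H'$, is of trace class with $\|H'\|_1\le k$; optimizing the phases on the $c_\pm$-contributions as well upgrades this to $|c_+|+|c_-|+\|H'\|_1\le k=\|\tilde\varphi\|_S$. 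It is here that the operators $S_{i,j}$ and the identity~\eqref{Sspan} are essential: they are what let the $q$-fold branching be encoded in a test operator of norm $\le1$.

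The main obstacle is the exact bookkeeping in the last two paragraphs. In the sufficiency step one must choose the weight $\mu(\cdot)$ so that the sum over $w\in\X$ collapses to $\dot\psi(\dd(x,y))$ with the \emph{precise} constant $\|H'\|_1$; equivalently, the tree's branching must be seen to contribute exactly a factor $(1-\tfrac1q)^{-1}(I-\tfrac{\tau}{q})^{-1}$-worth and no more — which is why the algebra around $\tau$, $(I-\tfrac{\tau}{q})^{-1}$ and the $S_{i,j}$ enters. In the necessity step the difficulty is dual: producing test operators on $\ell^2(\X)$ of norm $\le1$ realizing the whole linear span of the $S_{i,j}$ in $\cstar(S)$, and simultaneously capturing the asymptotic constants $c_\pm$ with the correct signs. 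Lemma~\ref{HH}, the operator $\tau$, the resolvent $(I-\tfrac{\tau}{q})^{-1}$ and~\eqref{Sspan} are exactly the devices that make the two sides match; the passage from $H$ to $H'$, the $c_\pm$-terms, and the reductions in the first paragraph are, by comparison, routine.
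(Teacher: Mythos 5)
Your first paragraph (extraction of $c_\pm$ and $\dot\psi$ from trace-classness of $H$, and the bipartite factorization $(-1)^{\dd(x,y)}=(-1)^{\dd(x,x_0)}(-1)^{\dd(y,x_0)}$) is correct and coincides with the first step of the paper's proof. The two main steps, however, each contain a genuine gap. In the sufficiency direction, the construction $P(x)=\sum_{w\in\X}\mu(\dd(x_0,w))\,e_w\otimes De_{\dd(x,w)}$, $Q(y)=\sum_{w\in\X}\mu(\dd(x_0,w))\,e_w\otimes C^*e_{\dd(y,w)}$ cannot satisfy your requirements (a) and (b) simultaneously. Indeed $\ip{P(x)}{Q(y)}=\sum_{w\in\X}|\mu(\dd(x_0,w))|^2\,(H')_{\dd(y,w),\dd(x,w)}$ is invariant under automorphisms of $\X$ fixing $x_0$, hence is a function of the triple $(\dd(x,x_0),\dd(y,x_0),\dd(x,y))$, and the weight must kill the dependence on the first two entries. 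Already for $x=y$ and radius $1$ this forces, with $g=|\mu|^2$, that $\sum_{w\sim x}g(\dd(x_0,w))$ be independent of $x$, i.e. $g(d-1)+qg(d+1)=(q+1)g(1)$ for all $d\geq1$; the nonnegative solutions of this recursion converge to $g(1)$, and $g(1)=0$ forces $g\equiv0$. So a nonzero admissible weight cannot decay, and then $\|P(x_0)\|^2=\sum_k g(k)\,(q+1)q^{k-1}\|De_k\|^2$ is not bounded by $\|D\|_2^2$ (for spherical-type data with slow geometric decay it is even infinite). This is exactly the tension the paper resolves by basing the construction at an end $\omega$ rather than at the vertex $x_0$: the map $c$, the vectors $\delta_x'=(1-\tfrac1q)^{-1/2}(I-UU^*)\delta_x$ with the Gram relations \eqref{ipdelta}, Lemma~\ref{Smn} identifying $\ip{\delta'_{\cc^j(y)}}{\delta'_{\cc^i(x)}}$ with $(S_{m,n})_{i,j}$, and Lemma~\ref{HH} plus the Hankel structure giving $\trace(S_{m,n}H')=\dot\psi(m+n)=\dot\psi(\dd(x,y))$; that is the mechanism which makes the pairing radial with the exact constant $\|H'\|_1$, and it has no analogue in a vertex-based average.

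In the necessity direction the decisive inequality you invoke, $|\sum_{x,y}\tilde\varphi(x,y)\overline{(T_K)_{x,y}}|\leq\|\tilde\varphi\|_S\|T_K\|_1$, is false: the preadjoint of $M_{\tilde\varphi}$ acts on $\traceclass$, so what one controls is $|\trace((M_{\tilde\varphi}B)T)|\leq\|\tilde\varphi\|_S\|B\|\,\|T\|_1$ with a bounded $B$ \emph{and} a trace-class $T$; pairing $\tilde\varphi$ entrywise against a single trace-class test operator amounts to asserting that the matrix $(\tilde\varphi(x,y))_{x,y}$ is bounded as an operator on $\ell^2(\X)$, which already fails for $\tilde\varphi\equiv1$ (Schur norm $1$, but with $T=u\odot u$, $u=n^{-1/2}\unit_F$, $|F|=n$, the pairing equals $n$ while $\|T\|_1=1$). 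Hence $\|H_n'\|_1\leq k$ does not follow from your test operators, and the simultaneous recovery of $|c_+|+|c_-|$ with "adjustable phases" is likewise unsubstantiated. The paper's route supplies precisely the missing duality: $M_{\tilde\varphi}$ preserves $\cstar(U)$ and acts diagonally on the $U_{m,n}$ (Lemma~\ref{Umn}, Corollary~\ref{cstarU}); composing with the character-type state from Coburn's theorem gives a functional $f_\varphi$ on $\cstar(U)$ with $f_\varphi(U_{m,n})=\dot\varphi(m+n)$ and $\|f_\varphi\|\leq\|\tilde\varphi\|_S$ (Lemma~\ref{staralg}); and the Wold--von Neumann decomposition (Proposition~\ref{functional}) writes $f_\varphi$ as a measure on $\T$ plus a trace-class operator with $\|f_\varphi\|=\|\mu\|+\|T\|_1$, which forces $\mu=c_+\delta_{+1}+c_-\delta_{-1}$, identifies $T$ with $H$ (so $T'=H'$), and yields the exact lower bound $|c_+|+|c_-|+\|H'\|_1\leq\|\tilde\varphi\|_S$. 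Some correct two-operator (or $\cstar$-algebraic) duality of this kind is indispensable; as written, neither of your two key steps goes through.
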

In order to prove Theorem~\ref{maintheorem}, choose (once and for all) an infinite chain $\omega$ in $\X$ starting at $x_0$, i.e., an infinite sequence $x_0,x_1,x_2,\ldots$ such that $x_i$ and $x_{i+1}$ are connected by an edge and $x_i\neq x_{i+2}$ for all $i\in\N_0$ (cf.~\cite[Chapter~I~\S1]{FTN:HarmonicAnalysisAndRepresentationTheoryForGroupsActingOnHhomogeneousTrees}). Since $\X$ is a tree we have $x_i\neq x_j$ whenever $i\neq j$. Define a map $c:\X\to\X$ such that for any $x\in\X$ the sequence $x,c(x),c^2(x),\ldots$ becomes the infinite chain setting out at $x$ and eventually following $\omega$ (this chain is denoted by $[x,\omega)$ in \cite{FTN:HarmonicAnalysisAndRepresentationTheoryForGroupsActingOnHhomogeneousTrees}). To make this more precise, define
\begin{equation*}
	c(x)=\left\{
	\begin{array}{lll}
		x_{i+1} & \mbox{if} & x=x_i\ \mbox{for some}\ i\in\N_0\\
		x' & \mbox{if} & x\neq x_i\ \mbox{for every}\ i\in\N_0
	\end{array}
	\right.
	\qquad(x\in\X),
\end{equation*}
where $x'$ is the unique vertex satisfying $\dd(x,x')=1$ and $\dd(x',\omega)=\dd(x,\omega)-1$, and where $\dd(y,\omega)=\min\setw{\dd(y,x_i) }{ i\in\N_0}$ for $y\in\X$.
\begin{remark}
	\label{xymn}
	For $x,y\in\X$ there are smallest numbers $m,n\in\N_0$ such that $\cc^m(x)\in[y,\omega)$ and $\cc^n(y)\in[x,\omega)$. Moreover, these $m,n\in\N_0$ can be characterized as the unique numbers satisfying
	\begin{equation*}
		\cc^m(x)=\cc^n(y)\quad\mbox{and}\quad\cc^{m-1}(x)\neq\cc^{n-1}(y)\quad\mbox{if}\quad m,n\geq1,
	\end{equation*}
	and
	\begin{equation*}
		\cc^m(x)=\cc^n(y)\quad\mbox{if}\quad\min\seto{m,n}=0.
	\end{equation*}
	Note that in both cases $\dd(x,y)=m+n$.
\end{remark}
Put
\begin{equation*}
	U\delta_x=\frac{1}{\sqrt q}\sum_{\cc(z)=x}\delta_z\qquad(x\in\X)
\end{equation*}
and observe that $\setw{z\in\X }{ \cc(z)=x }$ consists of precisely $q$ elements, because this set contains all neighbor points to $x$ except $\cc(x)$. Since two such sets $\setw{z\in\X }{ \cc(z)=x }$, $\setw{z\in\X }{ \cc(z)=x' }$ are disjoint if $x\neq x'$, it follows that $(U\delta_x)_{x\in\X}$ is an orthonormal set in $\ell^2(\X)$. This shows that $U$ extends to an isometry of $\ell^2(\X)$. Elementary computations show that
\begin{equation*}
	U^*\delta_x=\frac{1}{\sqrt q}\delta_{\cc(x)}\qquad(x\in\X)
\end{equation*}
and
\begin{equation*}
	UU^*\delta_x=\frac{1}{q}\sum_{\cc(z)=\cc(x)}\delta_z\qquad(x\in\X).
\end{equation*}
In particular, $UU^*\neq I$ so $U$ is a non-unitary isometry. For each $x\in\X$ we define a vector $\delta_x'\in\ell^2(\X)$ by
\begin{equation*}
	\delta_x'=\big(1-\tfrac{1}{q}\big)^{-\tfrac{1}{2}}(I-UU^*)\delta_x=\big(1-\tfrac{1}{q}\big)^{-\tfrac{1}{2}}\big(\delta_x-\frac{1}{q}\sum_{\cc(z)=\cc(x)}\delta_z\big)\qquad(x\in\X).
\end{equation*}
Using the fact that for all $w\in\X$ the set $\setw{z\in\X }{ \cc(z)=w}$ has $q$ elements, one easily checks that
\begin{equation}
	\label{ipdelta}
	\ip{\delta_y'}{\delta_x'}=\left\{
	\begin{array}{lll}
		1 & \mbox{if} & x=y\\
		-\tfrac{1}{q-1} & \mbox{if} & x\neq y,\,\cc(x)=\cc(y)\\
		0 & \mbox{if} & \cc(x)\neq\cc(y)
	\end{array}
	\right.
	\qquad(x,y\in\X).
\end{equation}
\begin{lemma}
	\label{Smn}
	For $x,y\in\X$ we have that
	\begin{equation*}
		(S_{m,n})_{i,j}=\ip{\delta_{\cc^j(y)}'}{\delta_{\cc^i(x)}'}\qquad(i,j\in\N_0),
	\end{equation*}
	when $m,n\in\N_0$ are chosen as in Remark~\ref{xymn}.
\end{lemma}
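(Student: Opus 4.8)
The plan is to compute both sides of the claimed identity explicitly and to match them through a case analysis governed by the positions of $i$ relative to $m$ and of $j$ relative to $n$.

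First I would determine the left-hand side. Since $(S^\adjoint)^n e_j=e_{j-n}$ for $j\geq n$ and $0$ otherwise, one has $S^m(S^\adjoint)^n e_j=e_{j+m-n}$ for $j\geq n$ and $0$ otherwise, so the matrix entry $(S^m(S^\adjoint)^n)_{i,j}$ equals $1$ precisely when $i-m=j-n\geq 0$, and $0$ otherwise. Inserting this into the two defining formulas for $S_{m,n}$ — namely $S_{m,n}=S^m(S^\adjoint)^n$ when $\min\seto{m,n}=0$, and $S_{m,n}=\big(1-\tfrac1q\big)^{-1}\big(S^m(S^\adjoint)^n-\tfrac1q S^{m-1}(S^\adjoint)^{n-1}\big)$ when $m,n\geq 1$ — a one-line computation (the conditions $i-m=j-n\geq0$ and $i-m=j-n\geq-1$ differ exactly at $(i,j)=(m-1,n-1)$) gives
\begin{equation*}
	(S_{m,n})_{i,j}=\left\{\begin{array}{ll}1 & \text{if }i-m=j-n\geq 0,\\ -\tfrac1{q-1} & \text{if }i=m-1\text{ and }j=n-1,\\ 0 & \text{otherwise,}\end{array}\right.
\end{equation*}
where the middle line can occur only when $m,n\geq 1$.

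Next I would set up the geometry on the tree side. Let $v:=\cc^m(x)=\cc^n(y)$ be the confluence vertex of Remark~\ref{xymn}. Three observations, all immediate from the definition of $\cc$ together with the minimality of $m$ and $n$ in Remark~\ref{xymn}, will do the job: (a) $\cc^i(x)=\cc^{i-m}(v)$ for $i\geq m$, and $\cc^j(y)=\cc^{j-n}(v)$ for $j\geq n$; (b) if $0\leq i<m$ then $\cc^i(x)$ does not lie on $[y,\omega)$, and in particular not on $[v,\omega)\subseteq[y,\omega)$, since $m$ is the least index with $\cc^m(x)\in[y,\omega)$ — and symmetrically for $\cc^j(y)$ with $j<n$; (c) every ray $[w,\omega)$ visits pairwise distinct vertices, so $\cc^a(w)=\cc^b(w)$ forces $a=b$. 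From (b) one sees in particular that $\cc^i(x)\neq\cc^j(y)$ whenever $i<m$ (for any $j$), and symmetrically.

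Finally I would run the four cases, in each one reading off $\ip{\delta_{\cc^j(y)}'}{\delta_{\cc^i(x)}'}$ from~\eqref{ipdelta}. If $i\geq m$ and $j\geq n$: both vertices lie on $[v,\omega)$, at depths $i-m$ and $j-n$, so by (c) they are equal iff $i-m=j-n$, and otherwise already their $\cc$-images differ; this yields $1$ or $0$ according as $i-m=j-n$ or not, matching the first and third lines of the formula. If exactly one of $i<m$, $j<n$ holds: by (a) and (b) the two vertices are distinct — one lies on $[v,\omega)$, the other does not — and the same is true of their $\cc$-images, using (c) to treat the boundary case where one $\cc$-image equals $v$; hence the inner product is $0$, and since here $i-m\neq j-n$ and $(i,j)\neq(m-1,n-1)$ this matches the third line. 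If $i<m$ and $j<n$ (so $m,n\geq 1$): the two vertices are distinct by (b), while the $\cc$-images $\cc^{i+1}(x)$ and $\cc^{j+1}(y)$ — with $i+1\leq m$, $j+1\leq n$ — coincide, both equalling $v$, exactly when $i+1=m$ and $j+1=n$ (using (b) and (c) for the remaining sub-cases); so~\eqref{ipdelta} gives $-\tfrac1{q-1}$ at $(i,j)=(m-1,n-1)$ and $0$ elsewhere, matching the middle and third lines. The one conceptual point, on which everything else rests, is observation (b): the rays $k\mapsto\cc^k(x)$ and $k\mapsto\cc^k(y)$ cannot meet before reaching $v$, which is precisely the minimality built into Remark~\ref{xymn}; the rest is bookkeeping.
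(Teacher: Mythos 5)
Your proposal is correct and follows essentially the same route as the paper: compute the matrix entries $(S_{m,n})_{i,j}$ explicitly, express $\ip{\delta_{\cc^j(y)}'}{\delta_{\cc^i(x)}'}$ via~\eqref{ipdelta}, and match the two using the minimality characterization of $m,n$ in Remark~\ref{xymn}. The only difference is presentational: your observations (a)--(c) and the four-case analysis spell out in detail the equivalences $\cc^j(y)=\cc^i(x)\iff i-m=j-n\geq0$ and $\cc^{j+1}(y)=\cc^{i+1}(x)\iff i-m=j-n\geq-1$, which the paper asserts directly ``by the definition of $m$ and $n$''.
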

\begin{proof}
	For $m,n,i,j\in\N_0$ we have that
	\begin{eqnarray*}
		(S_{m,n})_{i,j} & = & \ip{S_{m,n}e_j}{e_i}\\
		& = & \big(1-\tfrac{1}{q}\big)^{-1}\ip{[S^m(S^*)^n-\tfrac{1}{q}S^*S^m(S^*)^nS]e_j}{e_i}\\
		& = & \big(1-\tfrac{1}{q}\big)^{-1}[\ip{(S^*)^n e_j}{(S^*)^m e_i}-\tfrac{1}{q}\ip{(S^*)^n e_{j+1}}{(S^*)^m e_{i+1}}]\\
		& = & \left\{
		\begin{array}{lll}
			1 & \mbox{if} & i-m=j-n\geq0\\
			-\tfrac{1}{q-1} & \mbox{if} & i-m=j-n=-1\\
			0 & \mbox{if} & i-m=j-n<-1\quad\mbox{or}\quad i-m\neq j-n
		\end{array}
		\right..
	\end{eqnarray*}
	On the other hand, if $x,y\in\X$ and $m,n\in\N_0$ are defined according to Remark~\ref{xymn}, then by~\eqref{ipdelta},
	\begin{equation*}
		\ip{\delta_{\cc^j(y)}'}{\delta_{\cc^i(x)}'}=\left\{
		\begin{array}{lll}
			1 & \mbox{if} & \cc^j(y)=\cc^i(x)\\
			-\tfrac{1}{q-1} & \mbox{if} & \cc^j(y)\neq\cc^i(x)\quad\mbox{and}\quad\cc^{j+1}(y)=\cc^{i+1}(x)\\
			0 & \mbox{if} & \cc^{j+1}(y)\neq\cc^{i+1}(x)
		\end{array}
		\right.
	\end{equation*}
	By the definition of $m$ and $n$ we have
	\begin{equation*}
		\cc^j(y)=\cc^i(x)\iff i-m=j-n\geq0
	\end{equation*}
	and
	\begin{equation*}
		\cc^{j+1}(y)=\cc^{i+1}(x)\iff i+1-m=j+1-n\geq0.
	\end{equation*}
	Therefore
	\begin{equation*}
		\ip{\delta_{\cc^j(y)}'}{\delta_{\cc^i(x)}'}=\left\{
		\begin{array}{lll}
			1 & \mbox{if} & i-m=j-n\geq0\\
			-\tfrac{1}{q-1} & \mbox{if} & i-m=j-n=-1\\
			0 & \mbox{if} & i-m=j-n<-1\quad\mbox{or}\quad i-m\neq j-n
		\end{array}
		\right.
	\end{equation*}
	This proves Lemma~\ref{Smn}.
\end{proof}
Similarly to how we defined $S_{m,n}$ for $m,n\in\N_0$, put
\begin{eqnarray}
	\label{UU}\\
	\nonumber U_{m,n} & = & \big(1-\tfrac{1}{q}\big)^{-1}(U^m(U^*)^n-\tfrac{1}{q}U^*U^m(U^*)^nU)\\
	\nonumber & = & \left\{
	\begin{array}{lll}
		\big(1-\tfrac{1}{q}\big)^{-1}(U^m(U^*)^n-\tfrac{1}{q}U^{m-1}(U^*)^{n-1}) & \mbox{if} & m,n\geq1\\
		U^m(U^*)^n & \mbox{if} & \min\seto{m,n}=0
	\end{array}
	\right..
\end{eqnarray}
According to Coburn's theorem (cf.~\cite[Theorem~3.5.18]{Mur:C*-algebrasAndOperatorTheory}) there exists a $*$-isomorphism $\Phi$ of $\cstar(S)$ onto $\cstar(U)$ such that $\Phi(S)=U$. Hence, by~\eqref{Sspan}, $\cstar(U)$ is equal to the closed linear span of $\setw{U_{m,n} }{ m,n\in\N_0}$.
\begin{lemma}
	\label{Umn}
	For $x,y\in\X$ we have that $(U_{m,n})_{x,y}$ is non-zero if and only if $m,n\in\N_0$ are chosen as in Remark~\ref{xymn}. In particular, $(U_{m,n})_{x,y}\neq0$ implies that $\dd(x,y)=m+n$.
\end{lemma}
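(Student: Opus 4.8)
The plan is to compute the matrix entries $(U_{m,n})_{x,y}=\ip{U_{m,n}\delta_y}{\delta_x}$ in the canonical basis $(\delta_x)_{x\in\X}$ of $\ell^2(\X)$ directly from the defining formula~\eqref{UU}, and then to recognise the resulting vanishing pattern as the one described in Remark~\ref{xymn} (Coburn's theorem and the isomorphism $\Phi$ are not needed for this). First I would record, by induction from $U^*\delta_x=\tfrac{1}{\sqrt q}\delta_{\cc(x)}$, that $(U^*)^n\delta_y=q^{-n/2}\delta_{\cc^n(y)}$ for all $y\in\X$ and $n\in\N_0$. Since $(U^m)^*=(U^*)^m$, this gives
\[
\big(U^m(U^*)^n\big)_{x,y}=\ip{(U^*)^n\delta_y}{(U^*)^m\delta_x}=q^{-(m+n)/2}\,\ip{\delta_{\cc^n(y)}}{\delta_{\cc^m(x)}},
\]
so that $\big(U^m(U^*)^n\big)_{x,y}$ equals $q^{-(m+n)/2}$ when $\cc^m(x)=\cc^n(y)$ and equals $0$ otherwise. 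Substituting this into~\eqref{UU} and using $\tfrac1q q^{-(m+n-2)/2}=q^{-(m+n)/2}$ to factor out $q^{-(m+n)/2}$, I obtain that for $\min\seto{m,n}=0$ the entry $(U_{m,n})_{x,y}$ is $q^{-(m+n)/2}$ when $\cc^m(x)=\cc^n(y)$ and $0$ otherwise, while for $m,n\geq1$ it equals $\big(1-\tfrac1q\big)^{-1}q^{-(m+n)/2}\big(\mathbf 1_{\{\cc^m(x)=\cc^n(y)\}}-\mathbf 1_{\{\cc^{m-1}(x)=\cc^{n-1}(y)\}}\big)$.

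Next, since applying $\cc$ turns the identity $\cc^{m-1}(x)=\cc^{n-1}(y)$ into $\cc^m(x)=\cc^n(y)$, the difference of indicators above is $0$ unless $\cc^m(x)=\cc^n(y)$ while $\cc^{m-1}(x)\neq\cc^{n-1}(y)$, in which case it is $1$. Combining the two cases, $(U_{m,n})_{x,y}\neq0$ if and only if either $\min\seto{m,n}=0$ and $\cc^m(x)=\cc^n(y)$, or $m,n\geq1$ and $\cc^m(x)=\cc^n(y)$ but $\cc^{m-1}(x)\neq\cc^{n-1}(y)$. By Remark~\ref{xymn} this disjunction is satisfied by exactly one pair $m,n\in\N_0$ --- namely the pair associated there to $x,y$ --- so $(U_{m,n})_{x,y}\neq0$ precisely when $m,n$ are chosen as in Remark~\ref{xymn}; and in that case $\dd(x,y)=m+n$, again by Remark~\ref{xymn}.

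The computation itself is entirely routine; the only step requiring any care is this last equivalence, which uses both the implication $\cc^{m-1}(x)=\cc^{n-1}(y)\Rightarrow\cc^m(x)=\cc^n(y)$ and the uniqueness built into Remark~\ref{xymn} --- the latter ultimately resting on the fact that the non-backtracking chains $[x,\omega)$ in the tree $\X$ consist of pairwise distinct vertices. I expect this to be the only, and rather mild, obstacle.
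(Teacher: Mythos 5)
Your proposal is correct and follows essentially the same route as the paper: expand $(U_{m,n})_{x,y}$ via \eqref{UU}, use $(U^*)^n\delta_y=q^{-n/2}\delta_{\cc^n(y)}$ to reduce each entry to $q^{-(m+n)/2}$ times a difference (or single instance) of Kronecker deltas, and resolve the case $m,n\geq1$ with the implication $\cc^{m-1}(x)=\cc^{n-1}(y)\Rightarrow\cc^m(x)=\cc^n(y)$ before invoking the uniqueness in Remark~\ref{xymn}. No gaps; the paper's proof is the same computation written with inner products of the shifted basis vectors.
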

\begin{proof}
	Let $m,n\in\N_0$ and $x,y\in\X$. By~\eqref{UU} we have for $m,n\geq1$
	\begin{eqnarray*}
		(U_{m,n})_{x,y} & = & \big(1-\tfrac{1}{q}\big)^{-1}\big(\ip{U^m(U^*)^n\delta_y}{\delta_x}-\tfrac{1}{q}\ip{U^{m-1}(U^*)^{n-1}\delta_y}{\delta_x}\big)\\
		& = & \big(1-\tfrac{1}{q}\big)^{-1}q^{-\tfrac{m+n}{2}}\big(\ip{\delta_{\cc^n(y)}}{\delta_{\cc^m(x)}}-\ip{\delta_{\cc^{n-1}(y)}}{\delta_{\cc^{m-1}(x)}}\big).
	\end{eqnarray*}
	Since $\cc^{n-1}(y)=\cc^{m-1}(x)\implies\cc^n(y)=\cc^m(x)$ and hence $\cc^n(y)\neq\cc^m(x)\implies\cc^{n-1}(y)\neq\cc^{m-1}(x)$ we find that
	\begin{equation*}
		(U_{m,n})_{x,y}=\big(1-\tfrac{1}{q}\big)^{-1}q^{-\tfrac{m+n}{2}}\quad\mbox{if}\quad\cc^n(y)=\cc^m(x)\quad\mbox{and}\quad\cc^{n-1}(y)\neq\cc^{m-1}(x),
	\end{equation*}
	and
	\begin{equation*}
		(U_{m,n})_{x,y}=0\quad\mbox{if}\quad\cc^n(y)\neq\cc^m(x)\quad\mbox{or}\quad\cc^{n-1}(y)=\cc^{m-1}(x).
	\end{equation*}
	If $\min\seto{m,n}=0$, then by~\eqref{UU}
	\begin{eqnarray*}
		(U_{m,n})_{x,y} & = & \ip{U^m(U^*)^n\delta_y}{\delta_x}\\
		& = & q^{-\tfrac{m+n}{2}}\ip{\delta_{\cc^n(y)}}{\delta_{\cc^m(x)}}\\
		& = & \left\{
		\begin{array}{lll}
			q^{-\tfrac{m+n}{2}} & \mbox{if} & \cc^n(y)=\cc^m(x)\\
			0 & \mbox{if} & \cc^n(y)\neq\cc^m(x)
		\end{array}
		\right.
	\end{eqnarray*}
	In both cases we see that $(U_{m,n})_{x,y}\neq0$ if and only if $m,n\in\N_0$ are defined from $x,y\in\X$ as in Remark~\ref{xymn}.
\end{proof}
\begin{corollary}
	\label{cstarU}
	Let $\varphi:\X\to\C$ be radial and $\tilde\varphi:\X\times\X\to\C$ the corresponding function as in Proposition~\ref{radial}. If $\tilde\varphi$ is a Schur multiplier, then $\cstar(U)$ is invariant under $M_{\tilde\varphi}\in\linbeg(\linbeg(\ell^2(\X)))$. Moreover,
	\begin{equation*}
		M_{\tilde\varphi}(U_{m,n})=\dot\varphi(m+n)U_{m,n}\qquad(m,n\in\N_0).
	\end{equation*}
\end{corollary}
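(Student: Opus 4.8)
The plan is to verify the eigenrelation $M_{\tilde\varphi}(U_{m,n})=\dot\varphi(m+n)U_{m,n}$ directly at the level of matrix entries and then deduce the invariance of $\cstar(U)$ by a continuity argument. Recall that, $\tilde\varphi$ being a Schur multiplier, $M_{\tilde\varphi}$ acts on any $A=(a_{x,y})_{x,y\in\X}\in\linbeg(\ell^2(\X))$ by the entrywise formula $(M_{\tilde\varphi}A)_{x,y}=\tilde\varphi(x,y)a_{x,y}=\dot\varphi(\dd(x,y))\,a_{x,y}$.

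First I would fix $m,n\in\N_0$ and inspect the matrix of $U_{m,n}$. By Lemma~\ref{Umn}, the entry $(U_{m,n})_{x,y}$ vanishes unless $m,n$ are exactly the numbers attached to the pair $(x,y)$ as in Remark~\ref{xymn}, and in that case $\dd(x,y)=m+n$. Hence, for every $x,y\in\X$,
\[
(M_{\tilde\varphi}U_{m,n})_{x,y}=\dot\varphi(\dd(x,y))(U_{m,n})_{x,y}=\dot\varphi(m+n)(U_{m,n})_{x,y},
\]
the second equality being trivial when $(U_{m,n})_{x,y}=0$ and following from $\dd(x,y)=m+n$ otherwise. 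Since an operator is determined by its matrix, this yields $M_{\tilde\varphi}(U_{m,n})=\dot\varphi(m+n)U_{m,n}$, which is the asserted eigenrelation.

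For the invariance statement, I would use that $\tilde\varphi$ being a Schur multiplier means $M_{\tilde\varphi}\in\linbeg(\linbeg(\ell^2(\X)))$, so $M_{\tilde\varphi}$ is norm-continuous. By Coburn's theorem together with~\eqref{Sspan} one has $\cstar(U)=\overline{\sspan}\setw{U_{m,n}}{m,n\in\N_0}$. Applying $M_{\tilde\varphi}$ to a finite linear combination of the $U_{m,n}$ gives, by the eigenrelation, another finite linear combination of the same operators, hence an element of $\cstar(U)$; by continuity $M_{\tilde\varphi}$ carries the closed linear span $\cstar(U)$ into itself.

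I do not expect a genuine obstacle here: the corollary is essentially a bookkeeping consequence of Lemma~\ref{Umn}. The only point requiring care is to keep straight that all nonvanishing entries of $U_{m,n}$ sit at graph distance exactly $m+n$, so that $\tilde\varphi$ is constant, equal to $\dot\varphi(m+n)$, on the support of $U_{m,n}$; everything else is immediate.
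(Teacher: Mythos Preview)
Your argument is correct and matches the paper's own proof essentially line for line: both verify the eigenrelation entrywise via Lemma~\ref{Umn} and then deduce invariance from the fact that $\cstar(U)$ is the closed linear span of the $U_{m,n}$ together with boundedness of $M_{\tilde\varphi}$. The only difference is that you spell out the continuity step a bit more explicitly, which is fine.
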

\begin{proof}
	Since $\cstar(U)$ is equal to the closed linear span of $(U_{m,n})_{m,n\in\N_0}$ we only have to show that
	\begin{equation*}
		M_{\tilde\varphi}(U_{m,n})=\dot\varphi(m+n)U_{m,n}\qquad(m,n\in\N_0).
	\end{equation*}
	But from the definition of a Schur multiplier it follows that
	\begin{equation*}
		(M_{\tilde\varphi}(U_{m,n}))_{x,y}=\tilde\varphi(x,y)(U_{m,n})_{x,y}=\dot\varphi(m+n)(U_{m,n})_{x,y}\qquad(m,n\in\N_0),
	\end{equation*}
	since, according to Lemma~\ref{Umn}, $(U_{m,n})_{x,y}\neq0$ implies that $m+n=\dd(x,y)$.
\end{proof}
Following the notation of~\cite[3.3.9]{Ped:AnalysisNow} we let $\xi\odot\eta$ denote the rank one operator given by
\begin{equation*}
	(\xi\odot\eta)(\zeta)=\ip{\zeta}{\eta}\xi\qquad(\zeta\in\ell^2(\N_0))
\end{equation*}
for $\xi,\eta\in\ell^2(\N_0)$. It is elementary to check that the trace class norm of $\xi\odot\eta$ is
\begin{equation}
	\label{1x}
	\|\xi\odot\eta\|_1=\|\xi\|_2\|\eta\|_2\qquad(\xi,\eta\in\ell^2(\N_0)).
\end{equation}
If $\xi^{(k)},\eta^{(k)}\in\ell^2(\N_0)$ for all $k\in\N_0$ and
\begin{equation*}
	\sum_{k=0}^\infty\|\xi^{(k)}\|_2^2<\infty\quad\mbox{and}\quad\sum_{k=0}^\infty\|\eta^{(k)}\|_2^2<\infty,
\end{equation*}
then
\begin{equation*}
	T=\sum_{k=0}^\infty\xi^{(k)}\odot\eta^{(k)}
\end{equation*}
is a well defined trace class operator, because
\begin{equation}
	\label{finnorm}
	\sum_{k=0}^\infty\|\xi^{(k)}\|_2\|\eta^{(k)}\|_2\leq\Big(\sum_{k=0}^\infty\|\xi^{(k)}\|_2^2\Big)^{\tfrac{1}{2}}\Big(\sum_{k=0}^\infty\|\eta^{(k)}\|_2^2\Big)^{\tfrac{1}{2}}<\infty.
\end{equation}
Conversely, if $T\in\traceclass(\ell^2(\N_0))$ there exists sequences $(\xi^{(k)})_{k\in\N_0},(\eta^{(k)})_{k\in\N_0}$ in $\ell^2(\N_0)$ such that
\begin{equation}
	\label{1*}
	\sum_{k=0}^\infty\|\xi^{(k)}\|_2^2=\sum_{k=0}^\infty\|\eta^{(k)}\|_2^2=\|T\|_1<\infty
\end{equation}
and
\begin{equation}
	\label{1**}
	T=\sum_{k=0}^\infty\xi^{(k)}\odot\eta^{(k)}.
\end{equation}
Finally, note that~\eqref{1*} and~\eqref{1**} imply that
\begin{equation}
	\label{1***}
	\|T\|_1=\sum_{k=0}^\infty\|\xi^{(k)}\|_2\|\eta^{(k)}\|_2.
\end{equation}

This is well known, and it can be obtained from the polar decomposition $T=U|T|$ of $T$ combined with the spectral theorem for compact normal operators (cf.~\cite[Theorem~3.3.8]{Ped:AnalysisNow}), which shows that
\begin{equation*}
	|T|=\sum_{i\in I}\lambda_i e_i\odot e_i,
\end{equation*}
where $(e_i)_{i\in I}$ is an orthonormal basis of eigenvectors for $|T|$ and $(\lambda_i)_{i\in I}$ are the corresponding (non-negative) eigenvalues of $|T|$. Note that
\begin{equation*}
	\sum_{i\in I}\lambda_i=\trace(|T|)=\|T\|_1<\infty.
\end{equation*}
In particular, $I_0=\setw{i\in I }{ \lambda_i>0}$ is countable (possibly finite). Moreover,
\begin{equation*}
	T=\sum_{i\in I_0}\xi^{(i)}\odot\eta^{(i)},
\end{equation*}
where $\xi^{(i)}=(\lambda_i)^{\tfrac{1}{2}}Ue_i$ and $\eta^{(i)}=(\lambda_i)^{\tfrac{1}{2}}e_i$ satisfy
\begin{equation*}
	\sum_{i\in I_0}\|\xi^{(i)}\|_2^2=\sum_{i\in I_0}\|\eta^{(i)}\|_2^2=\sum_{i\in I_0}\lambda_i=\|T\|_1.
\end{equation*}
This proves~\eqref{1*} and~\eqref{1**} because $I_0$ is countable.
\begin{proof}[Proof of Theorem~\ref{maintheorem} {\it (ii)}$\implies${\it (i)} and upper bound for $\|\tilde\varphi\|_S$]
	Assuming that {\it (ii)} is true we have that the Hankel matrix $H=(h_{i,j})_{i,j\in\N_0}$ is of trace class. If $A$ is a trace class operator, then $A$ is a linear combination of positive trace class operators and therefore
	\begin{equation*}
		\sum_{n=0}^\infty |\ip{Ae_n}{e_n}|<\infty
	\end{equation*}
	and it follows that
	\begin{equation*}
		\sum_{i=0}^\infty|h_{i,i}|<\infty\quad\mbox{and}\quad\sum_{i=0}^\infty|h_{i+1,i}|<\infty
	\end{equation*}
	by putting $A=H$ and $A=S^*H$, respectively (note that $S^*H$ is of trace class since $H$ is of trace class). Using that
	\begin{equation*}
		h_{i,i}=\dot\varphi(2i)-\dot\varphi(2i+2)\quad\mbox{and}\quad h_{i+1,i}=\dot\varphi(2i+1)-\dot\varphi(2i+3)\qquad(i\in\N_0)
	\end{equation*}
	we conclude that
	\begin{equation*}
		\lim_{i\to\infty}\dot\varphi(2i)=\dot\varphi(0)-\sum_{i=0}^\infty h_{i,i}\quad\mbox{and}\quad\lim_{i\to\infty}\dot\varphi(2i+1)=\dot\varphi(1)-\sum_{i=0}^\infty h_{i+1,i},
	\end{equation*}
	where the sums converge (absolutely). Put
	\begin{equation*}
		c_\pm=\tfrac{1}{2}\lim_{i\to\infty}\dot\varphi(2i)\pm\tfrac{1}{2}\lim_{i\to\infty}\dot\varphi(2i+1)
	\end{equation*}
	and
	\begin{equation*}
		\dot\psi(n)=\dot\varphi(n)-c_+-c_-(-1)^n\qquad(n\in\N_0).
	\end{equation*}
	Notice that
	\begin{equation*}
		\lim_{n\to\infty}\dot\psi(n)=0.
	\end{equation*}
	We conclude the existence of $c_\pm$ and $\dot\psi$ as claimed in the theorem, and note that the uniqueness follows readily.
	
	Put
	\begin{equation*}
		H'=\big(1-\tfrac{1}{q}\big)\big(I-\tfrac{\tau}{q}\big)^{-1}H
	\end{equation*}
	and recall that
	\begin{equation*}
		\trace(S^i(S^*)^jH)=\trace(S_{i,j}H')\qquad(i,j\in\N_0)
	\end{equation*}
	according to Lemma~\ref{HH}. Since
	\begin{equation*}
		\trace(S^i(S^*)^jH)=\sum_{k=0}^\infty h_{k+j,k+i}\qquad(i,j\in\N_0)
	\end{equation*}
	it follows using
	\begin{equation*}
		h_{j,i}=\dot\varphi(i+j)-\dot\varphi(i+j+2)=\dot\psi(i+j)-\dot\psi(i+j+2)\qquad(i,j\in\N_0)
	\end{equation*}
	and
	\begin{equation*}
		\lim_{n\to\infty}\dot\psi(n)=0
	\end{equation*}
	that
	\begin{equation}
		\label{psitrace}
		\dot\psi(i+j)=\trace(S_{i,j}H')\qquad(i,j\in\N_0).
	\end{equation}
	Since $H'$ is of trace class, there exists (cf.~\eqref{1**} and~\eqref{1***}) sequences $(\xi^{(k)})_{k\in\N_0}$ and $(\eta^{(k)})_{k\in\N_0}$ in $\ell^2(\N_0)$ such that
	\begin{equation*}
		H'=\sum_{k=0}^\infty\xi^{(k)}\odot\eta^{(k)}\quad\mbox{and}\quad\|H'\|_1=\sum_{k=0}^\infty\|\xi^{(k)}\|_2\|\eta^{(k)}\|_2,
	\end{equation*}
	and therefore
	\begin{equation}
		\label{8.5}
		h_{i,j}'=\sum_{k=0}^\infty\xi_i^{(k)}\bar\eta_j^{(k)}\qquad(i,j\in\N_0).
	\end{equation}
	For each $k\in\N_0$ we define $P_k,Q_k:X\to\ell^2(\X)$ by
	\begin{equation*}
		P_k(x)=\sum_{i=0}^\infty\xi_i^{(k)}\delta_{\cc^i(x)}'\quad\mbox{and}\quad Q_k(y)=\sum_{j=0}^\infty\eta_j^{(k)}\delta_{\cc^j(y)}'\qquad(x,y\in\X).
	\end{equation*}
	By~\eqref{ipdelta}, $\setw{\delta_{\cc^i(x)}'}{i\in\N_0}$ and $\setw{\delta_{\cc^j(y)}'}{j\in\N_0}$ are orthonormal sets in $\ell^2(\X)$. Hence,
	\begin{equation*}
		\|P_k(x)\|_2=\|\xi^{(k)}\|_2\quad\mbox{and}\quad\|Q_k(y)\|_2=\|\eta^{(k)}\|_2\qquad(k\in\N_0,\,x,y\in\X),
	\end{equation*}
	and therefore
	\begin{equation*}
		\sum_{k=0}^\infty\|P_k\|_\infty\|Q_k\|_\infty=\sum_{k=0}^\infty\|\xi^{(k)}\|_2\|\eta^{(k)}\|_2=\|H'\|_1.
	\end{equation*}
	By~\eqref{8.5}
	\begin{equation*}
		\sum_{k=0}^\infty\ip{P_k(x)}{Q_k(y)}=\sum_{k,i,j=0}^\infty\ip{\delta_{\cc^i(x)}'}{\delta_{\cc^j(y)}'}\xi_i^{(k)}\bar\eta_j^{(k)}=\sum_{i,j=0}^\infty\ip{\delta_{\cc^i(x)}'}{\delta_{\cc^j(y)}'} h_{i,j}'
	\end{equation*}
	for all $x,y\in\X$. Momentarily fix $x,y\in\X$ and choose $m,n\in\N_0$ according to Remark~\ref{xymn}. Then $m+n=\dd(x,y)$ and by Lemma~\ref{Smn}
	\begin{equation*}
		(S_{m,n})_{j,i}=\ip{\delta_{\cc^i(x)}'}{\delta_{\cc^j(y)}'}\qquad(i,j\in\N_0).
	\end{equation*}
	Using~\eqref{psitrace} it follows that
	\begin{equation*}
		\sum_{k=0}^\infty\ip{P_k(x)}{Q_k(y)}=\sum_{i,j=0}^\infty (S_{m,n})_{j,i}h_{i,j}'=\trace(S_{m,n}H')=\dot\psi(m+n)=\tilde\psi(x,y).
	\end{equation*}
	Since $x,y\in\X$ were arbitrary we have that
	\begin{equation*}
		\tilde\varphi(x,y)=\dot\varphi(\dd(x,y))=c_++c_-(-1)^{\dd(x,y)}+\sum_{k=0}^\infty\ip{P_k(x)}{Q_k(y)}\qquad(x,y\in\X).
	\end{equation*}
	Put
	\begin{equation*}
		P_\pm(x)=c_\pm(\pm1)^{\dd(x,x_0)}\qquad(x\in\X)
	\end{equation*}
	and
	\begin{equation*}
		Q_\pm(y)=(\pm1)^{\dd(y,x_0)}\qquad(y\in\X),
	\end{equation*}
	then
	\begin{equation*}
		\tilde\varphi(x,y)=\ip{P_+(x)}{Q_+(y)}+\ip{P_-(x)}{Q_-(y)}+\sum_{k=0}^\infty\ip{P_k(x)}{Q_k(y)}\qquad(x,y\in\X)
	\end{equation*}
	and we conclude that $\tilde\varphi$ is a Schur multiplier with
	\begin{eqnarray*}
		\|\tilde\varphi\|_S & \leq & \|P_+\|_\infty\|Q_+\|_\infty+\|P_-\|_\infty\|Q_-\|_\infty+\sum_{k=0}^\infty\|P_k\|_\infty\|Q_k\|_\infty\\
		& = & |c_+|+|c_-|+\|H'\|_1.
	\end{eqnarray*}
	This finishes the first part of the proof of Theorem~\ref{maintheorem}.
\end{proof}
\begin{proposition}
	\label{functional}
	Let $V$ be a non-unitary isometry on some Hilbert space $\Hil$ and let $f$ be a bounded linear functional on $\cstar(V)$. Then there exists a complex Borel measure $\mu$ on $\T=\setw{z\in\C }{ |z|=1 }$ and a trace class operator $T$ on $\ell^2(\N_0)$ such that
	\begin{equation}
		\label{muT}
		f(V^m(V^*)^n)=\int_\T z^{m-n}\dd\mu(z)+\trace(S^m(S^*)^nT)\qquad(m,n\in\N_0).
	\end{equation}
	Moreover,
	\begin{equation*}
		\|f\|=\|\mu\|+\|T\|_1.
	\end{equation*}
\end{proposition}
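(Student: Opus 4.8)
The plan is to reduce to the case $V=S$ by Coburn's theorem and then exploit the structure of the Toeplitz algebra $\cstar(S)$. By Coburn's theorem there is a $*$-isomorphism $\Phi$ of $\cstar(S)$ onto $\cstar(V)$ with $\Phi(S)=V$, and since $\Phi$ is isometric it suffices to prove the statement with $V=S$, after replacing $f$ by $f\circ\Phi$. I would then recall that $\cstar(S)$ contains the ideal $\Cpt(\ell^2(\N_0))$ of compact operators (indeed $I-SS^*=e_0\odot e_0$, and conjugating by powers of $S$ produces all finite rank operators), that there is a symbol homomorphism $\pi$ identifying $\cstar(S)/\Cpt(\ell^2(\N_0))$ with $\cont(\T)$ and satisfying $\pi(S)=z$ (the function $z\mapsto z$ on $\T$), and that every $b\in\cont(\T)$ with $\|b\|_\infty<1$ lifts to some $c\in\cstar(S)$ with $\pi(c)=b$ and $\|c\|<1$ (a standard metric property of $\cstar$-quotients).

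Next I would let $T\in\traceclass(\ell^2(\N_0))$ be the unique trace class operator with $f(K)=\trace(KT)$ for all compact $K$, so $\|T\|_1$ equals the norm of $f$ restricted to $\Cpt(\ell^2(\N_0))$. The functional $g:=f-\trace(\,\cdot\,T)$ on $\cstar(S)$ vanishes on $\Cpt(\ell^2(\N_0))$, hence factors as $g=\bar g\circ\pi$ for a bounded functional $\bar g$ on $\cont(\T)$ with $\|\bar g\|=\|g\|$; by the Riesz representation theorem $\bar g(h)=\int_\T h\,\dd\mu(z)$ for a complex Borel measure $\mu$ on $\T$ with $\|\mu\|=\|\bar g\|$. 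Evaluating $f=g+\trace(\,\cdot\,T)$ on $S^m(S^*)^n$ and using $\pi(S^m(S^*)^n)=z^{m-n}$ on $\T$ then yields~\eqref{muT}. Since $x\mapsto\trace(xT)$ on $\cstar(S)$ has norm at most $\|T\|_1$, the triangle inequality gives the easy bound $\|f\|\leq\|\mu\|+\|T\|_1$.

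The hard part is the reverse inequality $\|f\|\geq\|\mu\|+\|T\|_1$: I must produce a single norm-one element of $\cstar(S)$ that almost simultaneously witnesses the mass of $T$ and of $\mu$. Fix $\varepsilon>0$. Pick a finite rank $F$ with $\|F\|\leq1$ and $|\trace(FT)|>\|T\|_1-\varepsilon$; replacing $F$ by $P_NFP_N$ for large $N$ (with $P_N$ the orthogonal projection onto $\sspan\{e_0,\dots,e_{N-1}\}$) perturbs $\trace(FT)$ by less than $\varepsilon$, so I may assume $F=P_NFP_N$. Pick $b\in\cont(\T)$ with $\|b\|_\infty<1$ and $|\bar g(b)|>\|\mu\|-\varepsilon$, with lift $c\in\cstar(S)$, $\pi(c)=b$, $\|c\|<1$. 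For $k\geq N$ set $a=\theta F+\phi\,S^kc(S^*)^k$ with unimodular constants $\theta,\phi$. The point is that $S^kc(S^*)^k$ is supported on $\range\big(S^k(S^*)^k\big)=\overline{\sspan}\{e_k,e_{k+1},\dots\}$, where it is unitarily equivalent to $c$, while $F$ is supported on $\sspan\{e_0,\dots,e_{N-1}\}\subseteq\sspan\{e_0,\dots,e_{k-1}\}$, so the two summands act on orthogonal subspaces and $\|a\|\leq\max(\|F\|,\|c\|)\leq1$. Since $g$ annihilates finite rank operators, $f(F)=\trace(FT)$, and
\[
f\big(S^kc(S^*)^k\big)=\trace\big(S^kc(S^*)^kT\big)+\bar g\big(\pi(S^kc(S^*)^k)\big)=\trace\big(c\,(S^*)^kTS^k\big)+\bar g(b),
\]
where $\|(S^*)^kTS^k\|_1\to0$ as $k\to\infty$ (write $T$ as an absolutely summable series of rank one operators and use $(S^*)^k\to0$ strongly). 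Choosing $\theta,\phi$ so that $\theta\trace(FT)$ and $\phi\bar g(b)$ are nonnegative reals and taking $k$ large enough, I get $\Re f(a)\geq(\|T\|_1-\varepsilon)+(\|\mu\|-\varepsilon)-\varepsilon$, hence $\|f\|\geq\|\mu\|+\|T\|_1-3\varepsilon$; letting $\varepsilon\to0$ finishes the proof.

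The main obstacle is exactly this last construction: arranging that one and the same norm-one element detects (almost all of) the mass of the compact part and of the measure part of $f$. The ``shift-away'' device — replacing $c$ by $S^kc(S^*)^k$, which pushes the measure part onto a tail orthogonal to the finite rank support of $F$ while leaving $\pi(S^kc(S^*)^k)=b$ unchanged — is what makes the two contributions add rather than interfere, and the decay $\|(S^*)^kTS^k\|_1\to0$ is what keeps the compact part $\trace(\,\cdot\,T)$ from leaking back in.
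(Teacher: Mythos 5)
Your proof is correct, but it takes a genuinely different route from the paper. The paper realizes $f$ as a vector functional $A\mapsto\ip{\pi(A)\xi}{\eta}$ in the universal representation with $\|f\|=\|\xi\|\|\eta\|$, applies the Wold--von Neumann decomposition to the isometry, and reads off $\mu$ from the unitary summand (spectral theorem plus Riesz) and $T=\sum_e\xi_e\odot\eta_e$ from the shift summands; the identity $\|f\|=\|\mu\|+\|T\|_1$ then falls out of a Cauchy--Schwarz estimate, with no approximation argument. You instead use Coburn only to transport $f$ to the Toeplitz algebra $\cstar(S)$ (legitimate, since the $*$-isomorphism is isometric and matches $V^m(V^*)^n$ with $S^m(S^*)^n$), split $f$ canonically along the extension of $\cstar(S)$ by $\Cpt(\ell^2(\N_0))$ into $\trace(\,\cdot\,T)$ plus a functional factoring through $\cont(\T)$, and then prove the nontrivial inequality $\|f\|\geq\|T\|_1+\|\mu\|$ by hand: the shift-away element $\theta F+\phi S^kc(S^*)^k$ has norm at most $1$ because the two summands live on orthogonal subspaces, the symbol of $S^kc(S^*)^k$ is still $b$, and $\|(S^*)^kTS^k\|_1\to0$ kills the interference term. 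In effect you give a direct, concrete proof of the $\ell^1$-decomposition of the dual of a C$^*$-algebra over the ideal of compacts in this special case, which the paper avoids by working representation-theoretically. What each approach buys: yours is self-contained at the level of operators on $\ell^2(\N_0)$, identifies $T$ and $\mu$ canonically (restriction to the compacts, respectively the quotient functional), and needs only the standard Toeplitz extension and the fact that C$^*$-quotient maps preserve open unit balls; the paper's is shorter once Wold is available and yields the exact norm identity without any $\varepsilon$-management. The only blemishes are trivial: your $\varepsilon$-bookkeeping (after replacing $F$ by $P_NFP_N$ the final bound is $\|\mu\|+\|T\|_1-4\varepsilon$ rather than $3\varepsilon$), which does not affect the conclusion.
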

\begin{proof}
	Let $(\pi,\Hil)$ be the universal representation of $\cstar(V)$. Then there exists $\xi,\eta\in\Hil$ such that
	\begin{equation*}
		f(A)=\ip{\pi(A)\xi}{\eta}\qquad(A\in\cstar(V))
	\end{equation*}
	and $\|f\|=\|\xi\|\|\eta\|$. By the Wold--von Neumann theorem (cf.~\cite[Theorem~3.5.17]{Mur:C*-algebrasAndOperatorTheory}), $\Hil$ can be decomposed as an orthogonal direct sum
	\begin{equation}
		\label{*}
		\Hil=K\oplus(\oplus_{e\in E} L_e),
	\end{equation}
	where $K$ and $(L_e)_{e\in E}$ are $V$-invariant closed subspaces, $V_0=V|_{K}$ is a unitary operator on $K$ and for each $e\in E$, $V_e=V|_{L_e}$ is a copy of the forward shift $S$ on $\ell^2(\N_0)$. We can decompose $\xi$ and $\eta$ according to~\eqref{*}:
	\begin{equation*}
		\xi=\xi_0\oplus(\oplus_{e\in E}\xi_e)\quad\mbox{and}\quad\eta=\eta_0\oplus(\oplus_{e\in E}\eta_e),
	\end{equation*}
	where
	\begin{equation*}
		\|\xi\|^2=\|\xi_0\|^2+\sum_{e\in E}\|\xi_e\|^2\quad\mbox{and}\quad\|\eta\|^2=\|\eta_0\|^2+\sum_{e\in E}\|\eta_e\|^2.
	\end{equation*}
	After identifying $(V_e,L_e)$ with $(S,\ell^2(\N_0))$, we have
	\begin{eqnarray*}
		f(V^m(V^*)^n) & = & \ip{V_0^{m-n}\xi_0}{\eta_0}+\sum_{e\in E}\ip{S^m(S^*)^n\xi_e}{\eta_e}\\
		& = & \ip{V_0^{m-n}\xi_0}{\eta_0}+\trace(S^m(S^*)^nT)
	\end{eqnarray*}
	for $m,n\in\N_0$, where $T=\sum_{e\in E}\xi_e\odot\eta_e\in\traceclass(\ell^2(\N_0))$.
	
	Since $V_0$ is a unitary operator we have a natural isomorphism $\cstar(V_0)\cong\cont(\sigma(V_0))$, where $\sigma(V_0)\subseteq\T$. Hence by the Riesz representation theorem, there exists a complex measure $\mu$ on $\T$ with $\support(\mu)\subseteq\sigma(V_0)$ such that
	\begin{equation*}
		\ip{V_0^k\xi_0}{\eta_0}=\int_\T z^k\dd\mu(z)\qquad(k\in\Z)
	\end{equation*}
	and $\|\mu\|\leq\|\xi_0\|\|\eta_0\|$. Hence
	\begin{equation}
		\label{**}
		f(V^m(V^*)^n)=\int_\T z^{m-n}\dd\mu(z)+\trace(S^m(S^*)^nT)\qquad(m,n\in\N_0)
	\end{equation}
	and
	\begin{eqnarray*}
		\|f\| & = & \big(\|\xi_0\|^2+\sum_{e\in E}\|\xi_e\|^2\big)^{\tfrac{1}{2}}\big(\|\eta_0\|^2+\sum_{e\in E}\|\eta_e\|^2\big)^{\tfrac{1}{2}}\\
		& \geq & \|\xi_0\|\|\eta_0\|+\sum_{e\in E}\|\xi_e\|\|\eta_e\|\\
		& \geq & \|\mu\|+\|T\|_1.
	\end{eqnarray*}
	The converse inequality $\|f\|\leq\|\mu\|+\|T\|_1$ follows from~\eqref{**}.
\end{proof}
\begin{lemma}
	\label{staralg}
	Let $\varphi:\X\to\C$ be radial and $\tilde\varphi:\X\times\X\to\C$ be the corresponding function as in Proposition~\ref{radial}. If $\tilde\varphi$ is a Schur multiplier, then there exists a bounded linear functional $f_\varphi$ on $\cstar(U)$ satisfying
	\begin{equation}
		\label{diamond}
		f_\varphi(U_{m,n})=\dot\varphi(m+n)\qquad(m,n\in\N_0)
	\end{equation}
	and
	\begin{equation}
		\label{wr}
		\|f_\varphi\|\leq\|\tilde\varphi\|_S.
	\end{equation}
\end{lemma}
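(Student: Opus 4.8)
The plan is to realize $f_\varphi$ as the composition of a character of $\cstar(U)$ with the restriction of $M_{\tilde\varphi}$ to $\cstar(U)$. The starting point is Corollary~\ref{cstarU}: since $\tilde\varphi$ is a Schur multiplier, $\cstar(U)$ is invariant under $M_{\tilde\varphi}\in\linbeg(\linbeg(\ell^2(\X)))$ and $M_{\tilde\varphi}(U_{m,n})=\dot\varphi(m+n)U_{m,n}$ for all $m,n\in\N_0$. Because $M_{\tilde\varphi}$ has operator norm $\|\tilde\varphi\|_S$ and $\cstar(U)$ is a closed invariant subspace, the restriction $M_{\tilde\varphi}|_{\cstar(U)}\colon\cstar(U)\to\cstar(U)$ is bounded with norm at most $\|\tilde\varphi\|_S$.

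Next I would produce a character $\ell_0$ of $\cstar(U)$ (hence a functional of norm one) with $\ell_0(U_{m,n})=1$ for every $m,n\in\N_0$. For this I would transfer to the Toeplitz algebra via the Coburn isomorphism $\Phi\colon\cstar(S)\to\cstar(U)$, $\Phi(S)=U$ (so that $\Phi(S_{m,n})=U_{m,n}$), and compose the symbol homomorphism $\sigma\colon\cstar(S)\to\cont(\T)$, with $\sigma(S)$ the coordinate function, with evaluation at $1\in\T$. The key computation is that $\sigma(S_{m,n})$ is the function $z\mapsto z^{m-n}$ for every $m,n$: when $\min\{m,n\}=0$ this is immediate from $S_{m,n}=S^m(S^*)^n$ and $\sigma(S^*)=\bar z$, and when $m,n\ge1$ the normalizing factor $(1-\tfrac1q)^{-1}$ in the definition of $S_{m,n}$ exactly cancels, giving $\sigma(S_{m,n})=(1-\tfrac1q)^{-1}\bigl(z^{m-n}-\tfrac1q z^{m-n}\bigr)=z^{m-n}$. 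Hence $\ell_0:=\mathrm{ev}_1\circ\sigma\circ\Phi^{-1}$ is a character of $\cstar(U)$ with $\ell_0(U_{m,n})=\sigma(S_{m,n})(1)=1$.

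Finally I would set $f_\varphi:=\ell_0\circ\bigl(M_{\tilde\varphi}|_{\cstar(U)}\bigr)$. This is a bounded linear functional on $\cstar(U)$ with $\|f_\varphi\|\le\|\ell_0\|\,\|M_{\tilde\varphi}|_{\cstar(U)}\|\le\|\tilde\varphi\|_S$, which is~\eqref{wr}, and for all $m,n\in\N_0$
\begin{equation*}
	f_\varphi(U_{m,n})=\ell_0\bigl(\dot\varphi(m+n)U_{m,n}\bigr)=\dot\varphi(m+n)\,\ell_0(U_{m,n})=\dot\varphi(m+n),
\end{equation*}
which is~\eqref{diamond}. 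I expect the only mildly delicate point to be the symbol bookkeeping — checking that $S_{m,n}$ really maps to $z^{m-n}$ uniformly in $m,n$ (which is exactly where the normalization constant in the definition of $S_{m,n}$ is used) and making sure one may quote the symbol exact sequence for $\cstar(S)$ alongside Coburn's theorem; everything else is a formal composition of contractions.
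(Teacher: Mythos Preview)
Your proposal is correct and is essentially the paper's own argument: the paper defines the same state $\gamma=\mathrm{ev}_1\circ\rho$ on $\cstar(U)$ (with $\rho:\cstar(U)\to\cont(\T)$ obtained directly from Coburn's theorem rather than via $\Phi^{-1}$) and sets $f_\varphi=\gamma\circ M_{\tilde\varphi}|_{\cstar(U)}$, verifying $\gamma(U_{m,n})=1$ from $\gamma(U^m(U^*)^n)=1$ just as you do with $\sigma(S_{m,n})(1)=1$. The only cosmetic difference is that you pass through the isomorphism $\Phi$ to $\cstar(S)$ before applying the symbol map, whereas the paper applies the analogous quotient map to $\cstar(U)$ directly.
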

\begin{proof}
	By Coburn's theorem (cf.~\cite[Theorem~3.5.18]{Mur:C*-algebrasAndOperatorTheory}) and~\cite[Remark~3.5.1]{Mur:C*-algebrasAndOperatorTheory} there exists a $*$-homomorphism $\rho$ of $\cstar(U)$ onto $\cont(\T)$ such that $\rho(U)(z)=z$ for $z\in\T$. Let $\gamma_0:\cont(\T)\to\C$ be the pure state given by
	\begin{equation*}
		\gamma_0(f)=f(1)\qquad(f\in\cont(\T)).
	\end{equation*}
	Then $\gamma=\gamma_0\circ\rho$ is a state on $\cstar(U)$ and
	\begin{equation*}
		\gamma(U^m(U^*)^n)=1\qquad(m,n\in\N_0).
	\end{equation*}
	Define $f_\varphi:\cstar(U)\to\C$ by
	\begin{equation*}
		f_\varphi(W)=\gamma(M_{\tilde\varphi}(W))\qquad(W\in\cstar(U)).
	\end{equation*}
	Then $f_\varphi\in\cstar(U)^*$, $\|f_\varphi\|\leq\|M_{\tilde\varphi}\|=\|\tilde\varphi\|_S$ and by Corollary~\ref{cstarU} and~\eqref{UU} we have
	\begin{equation*}
		f_\varphi(U_{m,n})=\dot\varphi(m+n)\gamma(U_{m,n})=\dot\varphi(m+n).
	\end{equation*}
\end{proof}
\begin{proof}[Proof of Theorem~\ref{maintheorem} {\it (i)}$\implies${\it (ii)} and lower bound for $\|\tilde\varphi\|_S$]
	If $\tilde\varphi$ is a Schur multiplier on $\X$ then, according to Lemma~\ref{staralg}, there exists a bounded linear functional $f_\varphi$ on $\cstar(U)$ satisfying~\eqref{diamond} and~\eqref{wr}. Now use Proposition~\ref{functional} to find a complex Borel measure $\mu$ on $\T$ and a trace class operator $T'$ on $\ell^2(\N_0)$ such that
	\begin{equation}
		\label{pfd1}
		f_\varphi(U^m(U^*)^n)=\int_\T z^{m-n}\dd\mu(z)+\trace(S^m(S^*)^nT')\qquad(m,n\in\N_0)
	\end{equation}
	and
	\begin{equation}
		\label{pfdd1}
		\|f_\varphi\|=\|\mu\|+\|T'\|_1.
	\end{equation}
	Put $T=\big(1-\tfrac{1}{q}\big)^{-1}\big(I-\tfrac{\tau}{q}\big)T'$ and recall that
	\begin{equation*}
		\trace(S^i(S^*)^jT)=\trace(S_{i,j}T')\qquad(i,j\in\N_0),
	\end{equation*}
	according to Lemma~\ref{HH}. Using this,~\eqref{diamond} and~\eqref{pfd1} we find that
	\begin{equation}
		\label{pfd2}
		\dot\varphi(m+n)=\int_\T z^{m-n}\dd\mu(z)+\trace(S^m(S^*)^nT)\qquad(m,n\in\N_0).
	\end{equation}
	Using~\eqref{wr} and~\eqref{pfdd1} we find that
	\begin{equation}
		\label{pfdd2}
		\|\tilde\varphi\|_S\geq\|\mu\|+\|T'\|_1.
	\end{equation}
	Fix an arbitrary $k\in\Z$ and use~\eqref{pfd2} to see that
	\begin{equation*}
		\dot\varphi(2n+k)=\int_\T z^k\dd\mu(z)+\trace(S^{n+k}(S^*)^nT)\qquad(n+k,n\in\N_0).
	\end{equation*}
	For $n+k,n\in\N_0$ put $n_0=\max\seto{0,-k}$ and note that $n_0\leq n$.
	Observe that
	\begin{equation*}
		\trace(S^{n+k}(S^*)^nT)=\sum_{l=n}^\infty t_{l,l+k}\qquad(n+k,n\in\N_0),
	\end{equation*}
	when $T=(t_{i,j})_{i,j\in\N_0}$. Also,
	\begin{equation*}
		\lim_{n\to\infty}\sum_{l=n}^\infty t_{l,l+k}=0
	\end{equation*}
	since
	\begin{equation*}
		\sum_{l=n_0}^\infty |t_{l,l+k}|<\infty,
	\end{equation*}
	which follows from the fact that $S^{n_0+k}(S^*)^{n_0}T$ is of trace class. Hence
	\begin{equation*}
		\lim_{n\to\infty}\trace(S^{n+k}(S^*)^nT)=0
	\end{equation*}
	so we conclude that
	\begin{equation*}
		\lim_{n\to\infty}\dot\varphi(2n+k)=\int_\T z^k\dd\mu(z),
	\end{equation*}
	and therefore
	\begin{equation*}
		\int_\T z^k\dd\mu(z)=\int_\T z^{k+2}\dd\mu(z)\qquad(k\in\Z).
	\end{equation*}
	Hence, there exists $a,b\in\C$ such that
	\begin{equation*}
		\int_\T z^k\dd\mu(z)=\left\{
		\begin{array}{lll}
			a & \mbox{if} & k\mbox{ is even}\\
			b & \mbox{if} & k\mbox{ is odd}
		\end{array}
		\right..
	\end{equation*}
	Put $c_\pm=\tfrac{1}{2}(a\pm b)$ and let $\nu$ be the complex measure on $\T$ given by
	\begin{equation*}
		\nu=c_+\delta_{+1}+c_-\delta_{-1},
	\end{equation*}
	where $\delta_1$ (respectively $\delta_{-1}$) is the Dirac measure at $1$ (respectively $-1$). Then
	\begin{equation*}
		\int_\T z^k\dd\nu(z)=c_++(-1)^k c_-=\int_\T z^k\dd\mu(z)\qquad(k\in\Z).
	\end{equation*}
	Hence $\mu=\nu$ and we have according to~\eqref{pfd2} and~\eqref{pfdd2}
	\begin{equation}
		\label{pfd3}
		\dot\varphi(m+n)=c_++c_-(-1)^{m+n}+\trace(S^m(S^*)^nT)\qquad(m,n\in\N_0)
	\end{equation}
	and
	\begin{equation}
		\label{pfdd3}
		\|\tilde\varphi\|_S\geq|c_+|+|c_-|+\|T'\|_1.
	\end{equation}
	This finishes the second part of the proof of Theorem~\ref{maintheorem}, since
	\begin{equation*}
		t_{m,n}=\trace(S^n(S^*)^mT)-\trace(S^{n+1}(S^*)^{m+1}T)=\dot\varphi(m+n)-\dot\varphi(m+n+2)=h_{m,n}
	\end{equation*}
	for all $m,n\in\N_0$.
\end{proof}
This concludes the final step of the proof of Theorem~\ref{maintheorem}.

\bigskip\noindent
In the rest of this section we let $\X$ denote (the vertices of) a homogeneous tree of infinite degree, and consider the pair $(\X,x_0)$ where $x_0$ is a distinguished vertex in $\X$. For $2\leq q<\infty$ let $\X_q$ be a homogeneous subtree of degree $q+1$ containing $x_0$ (besides from $x_0$, we do not care which vertices are removed, since we will exclusively look at radial functions anyway). Obviously, there is a bijective correspondence between radial functions on $\X$ and radial functions on $\X_q$, and given $\dot\varphi:\N_0\to\C$ we will consider both $\varphi:\X\to\C$ and the restriction $\varphi|_{\X_q}:\X_q\to\C$ of $\varphi$ to $\X_q$.
\begin{lemma}
	\label{HH2}
	Let $T,T'\in\linbeg(\ell^2(\N_0))$ be related by
	\begin{equation*}
		T'=\big(1-\tfrac{1}{q}\big)\big(I-\tfrac{\tau}{q}\big)^{-1}T.
	\end{equation*}
	Assume that one, and hence both, matrices are of trace class, then
	\begin{equation*}
		\frac{q-1}{q+1}\|T\|_1\leq\|T'\|_1\leq\|T\|_1.
	\end{equation*}
\end{lemma}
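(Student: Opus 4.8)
The plan is to derive both inequalities purely from the triangle inequality for the trace norm together with the fact—already established in the text—that $\tau$ is an isometry on $\traceclass(\ell^2(\N_0))$, and hence that $\big(I-\tfrac{\tau}{q}\big)^{-1}=\sum_{n=0}^\infty\tfrac{\tau^n}{q^n}$ defines an element of $\linbeg(\traceclass(\ell^2(\N_0)))$ of norm at most $\big(1-\tfrac1q\big)^{-1}$. No genuine obstacle is expected here; the only thing to be careful about is that $T$ is trace class precisely when $T'$ is (which holds because $\big(I-\tfrac{\tau}{q}\big)$ and its inverse both map $\traceclass$ to itself), so that all the trace norms below are finite and the manipulations are legitimate.

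First I would prove the upper bound. Since $T'=\big(1-\tfrac1q\big)\big(I-\tfrac{\tau}{q}\big)^{-1}T$ and $\big\|\big(I-\tfrac{\tau}{q}\big)^{-1}\big\|_{\linbeg(\traceclass)}\leq\big(1-\tfrac1q\big)^{-1}$, we get immediately
\begin{equation*}
	\|T'\|_1\leq\big(1-\tfrac1q\big)\big(1-\tfrac1q\big)^{-1}\|T\|_1=\|T\|_1.
\end{equation*}

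For the lower bound I would invert the defining relation: $T=\big(1-\tfrac1q\big)^{-1}\big(I-\tfrac{\tau}{q}\big)T'=\big(1-\tfrac1q\big)^{-1}\big(T'-\tfrac1q\tau(T')\big)$. Applying the triangle inequality and using $\|\tau(T')\|_1=\|T'\|_1$ gives
\begin{equation*}
	\|T\|_1\leq\big(1-\tfrac1q\big)^{-1}\big(\|T'\|_1+\tfrac1q\|T'\|_1\big)=\big(1-\tfrac1q\big)^{-1}\big(1+\tfrac1q\big)\|T'\|_1=\tfrac{q+1}{q-1}\|T'\|_1,
\end{equation*}
and rearranging yields $\tfrac{q-1}{q+1}\|T\|_1\leq\|T'\|_1$, which completes the proof.
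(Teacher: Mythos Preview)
Your proposal is correct and follows essentially the same approach as the paper: the paper's proof simply cites the operator-norm bounds $\big\|\big(I-\tfrac{\tau}{q}\big)^{-1}\big\|\leq\big(1-\tfrac{1}{q}\big)^{-1}$ and $\big\|I-\tfrac{\tau}{q}\big\|\leq1+\tfrac{1}{q}$ on $\traceclass(\ell^2(\N_0))$, which is exactly what you unpack via the triangle inequality and the isometry of $\tau$.
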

\begin{proof}
	This follows using $\|\big(I-\frac{\tau}{q}\big)^{-1}\|\leq\big(1-\tfrac{1}{q}\big)^{-1}$ and $\|I-\frac{\tau}{q}\|\leq1+\tfrac{1}{q}$, where both operators are considered as elements of $\linbeg(\traceclass(\ell^2(\N_0)))$.
\end{proof}
\begin{lemma}
	\label{Smninf}
	For $x,y\in\X$ we have that
	\begin{equation*}
		(S^m(S^*)^n)_{i,j}=\ip{\delta_{\cc^j(y)}}{\delta_{\cc^i(x)}}\qquad(i,j\in\N_0),
	\end{equation*}
	when $m,n\in\N_0$ are chosen as in Remark~\ref{xymn}.
\end{lemma}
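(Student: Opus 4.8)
The plan is to prove Lemma~\ref{Smninf} along exactly the same lines as Lemma~\ref{Smn}, but using the genuine orthonormal basis $(\delta_z)_{z\in\X}$ of $\ell^2(\X)$ in place of the perturbed vectors $\delta_z'$, and the plain operators $S^m(S^*)^n$ in place of the combinations $S_{m,n}$. In effect the $q=\infty$ case is the degenerate specialization in which the coefficient $-\tfrac{1}{q-1}$ in~\eqref{ipdelta} vanishes, so the two relevant matrices become $0$--$1$ matrices and one only has to match their supports.

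First I would compute the left-hand side. For $i,j\in\N_0$,
\[
	(S^m(S^*)^n)_{i,j}=\ip{S^m(S^*)^n e_j}{e_i}=\ip{(S^*)^n e_j}{(S^*)^m e_i},
\]
and since $(S^*)^n e_j=e_{j-n}$ when $j\geq n$ and $(S^*)^n e_j=0$ otherwise, this equals $1$ precisely when $i\geq m$, $j\geq n$ and $i-m=j-n$, and equals $0$ in all other cases; that is, it equals $1$ iff $i-m=j-n\geq0$. Next I would compute the right-hand side: because $(\delta_z)_{z\in\X}$ is an orthonormal basis of $\ell^2(\X)$, we have $\ip{\delta_{\cc^j(y)}}{\delta_{\cc^i(x)}}=1$ when $\cc^i(x)=\cc^j(y)$ and $0$ otherwise. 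So the whole lemma reduces to the equivalence
\[
	\cc^i(x)=\cc^j(y)\iff i-m=j-n\geq0\qquad(i,j\in\N_0),
\]
for $m,n$ chosen as in Remark~\ref{xymn}.

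This equivalence is precisely the one already isolated inside the proof of Lemma~\ref{Smn}, and I would cite it from there. For completeness: by Remark~\ref{xymn} one has $\cc^m(x)=\cc^n(y)$ (with $\cc^{m-1}(x)\neq\cc^{n-1}(y)$ when $m,n\geq1$), so iterating $\cc$ gives $\cc^{m+k}(x)=\cc^{n+k}(y)$ for all $k\geq0$, which is the direction ``$\Leftarrow$''. For ``$\Rightarrow$'', a coincidence $\cc^i(x)=\cc^j(y)$ forces $\cc^i(x)\in[y,\omega)$ and $\cc^j(y)\in[x,\omega)$, so minimality of $m$ and $n$ in Remark~\ref{xymn} gives $i\geq m$, $j\geq n$; then $\cc^i(x)=\cc^{i-m+n}(y)$, and since $\cc$ is injective on the ray $[y,\omega)$ (its vertices are pairwise distinct, the tree having no cycles) we get $j=i-m+n$, i.e.\ $i-m=j-n\geq0$. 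Combining the two computations yields $(S^m(S^*)^n)_{i,j}=\ip{\delta_{\cc^j(y)}}{\delta_{\cc^i(x)}}$ for all $i,j\in\N_0$, as claimed. I do not anticipate any genuine obstacle: this is a direct simplification of Lemma~\ref{Smn}, and the only point needing a moment's care is the forward implication in the displayed equivalence, which is exactly the content of the minimality clause of Remark~\ref{xymn} together with injectivity of $\cc$ along $[y,\omega)$.
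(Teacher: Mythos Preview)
Your proof is correct and is exactly the ``easy verification'' the paper leaves to the reader; it follows the template of Lemma~\ref{Smn} precisely, reducing both sides to $0$--$1$ matrices and matching supports via the equivalence $\cc^i(x)=\cc^j(y)\iff i-m=j-n\geq0$ already established there. There is nothing to add.
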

\begin{proof}
	This is an easy verification.
\end{proof}
\begin{maintheorem}
	\label{maintheoreminf}
	Let $(\X,x_0)$ be a homogeneous tree of infinite degree with a distinguished vertex $x_0\in\X$. Let $\varphi:\X\to\C$ be a radial function and let $\dot\varphi:\N_0\to\C$ and $\tilde\varphi:\X\times\X\to\C$ be the corresponding functions as in Proposition~\ref{radial}. Finally, let $H=(h_{i,j})_{i,j\in\N_0}$ be the Hankel matrix given by $h_{i,j}=\dot\varphi(i+j)-\dot\varphi(i+j+2)$ for $i,j\in\N_0$. Then the following are equivalent:
	\begin{itemize}
		\item [(i)]$\tilde\varphi$ is a Schur multiplier.
		\item [(ii)]$H$ is of trace class.
	\end{itemize}
	If these two equivalent conditions are satisfied, then there exists unique constants $c_\pm\in\C$ and a unique $\dot\psi:\N_0\to\C$ such that
	\begin{equation*}
		\dot\varphi(n)=c_++c_-(-1)^n+\dot\psi(n)\qquad(n\in\N_0)
	\end{equation*}
	and
	\begin{equation*}
		\lim_{n\to\infty}\dot\psi(n)=0.
	\end{equation*}
	Moreover,
	\begin{equation*}
		\|\tilde\varphi\|_S=|c_+|+|c_-|+\|H\|_1.
	\end{equation*}
\end{maintheorem}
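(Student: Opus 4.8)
The plan is to reuse the two-part architecture of the proof of Theorem~\ref{maintheorem}, observing that for $q=\infty$ one direction becomes lighter while the other reduces to the finite-degree case already established. In~(ii)$\Rightarrow$(i) there is no averaging isometry $U$ to build, so one works directly with the families $(\delta_{\cc^i(x)})_{i\in\N_0}\subseteq\ell^2(\X)$, which are orthonormal because the chain $[x,\omega)$ has pairwise distinct vertices; the operators $S^m(S^*)^n$ take over the role of the $S_{m,n}$, the bridge being Lemma~\ref{Smninf} in place of Lemma~\ref{Smn}. In~(i)$\Rightarrow$(ii) there is no non-unitary isometry on $\ell^2(\X)$ to feed into Proposition~\ref{functional}, so instead I would restrict $\tilde\varphi$ to a homogeneous subtree $\X_q\subseteq\X$ of finite degree $q+1$, invoke Theorem~\ref{maintheorem} there, and then let $q\to\infty$, using Lemma~\ref{HH2} to control the resulting constants.

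\emph{(ii)$\Rightarrow$(i) and the upper bound $\|\tilde\varphi\|_S\leq|c_+|+|c_-|+\|H\|_1$.} Assume $H$ is of trace class. As at the beginning of the proof of Theorem~\ref{maintheorem}, trace-classness of $H$ and of $S^*H$ gives $\sum_i|h_{i,i}|<\infty$ and $\sum_i|h_{i+1,i}|<\infty$, whence $\lim_n\dot\varphi(2n)$ and $\lim_n\dot\varphi(2n+1)$ exist; then $c_\pm=\tfrac12\lim_n\dot\varphi(2n)\pm\tfrac12\lim_n\dot\varphi(2n+1)$ and $\dot\psi(n)=\dot\varphi(n)-c_+-c_-(-1)^n$ give the (visibly unique) decomposition with $\dot\psi(n)\to0$. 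Factor $H=\sum_k\xi^{(k)}\odot\eta^{(k)}$ with $\sum_k\|\xi^{(k)}\|_2\|\eta^{(k)}\|_2=\|H\|_1$ as in~\eqref{1**}--\eqref{1***}, so that $h_{i,j}=\sum_k\xi_i^{(k)}\overline{\eta_j^{(k)}}$, and put
\begin{equation*}
	P_k(x)=\sum_{i=0}^{\infty}\xi_i^{(k)}\,\delta_{\cc^i(x)},\qquad Q_k(y)=\sum_{j=0}^{\infty}\eta_j^{(k)}\,\delta_{\cc^j(y)}\qquad(x,y\in\X).
\end{equation*}
By orthonormality, $\|P_k(x)\|_2=\|\xi^{(k)}\|_2$ and $\|Q_k(y)\|_2=\|\eta^{(k)}\|_2$. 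Fix $x,y\in\X$ and take $m,n$ as in Remark~\ref{xymn}, so $\dd(x,y)=m+n$; by Lemma~\ref{Smninf} the relevant inner products are real and $\ip{\delta_{\cc^i(x)}}{\delta_{\cc^j(y)}}=(S^m(S^*)^n)_{i,j}$, which equals $1$ exactly when $i-m=j-n\geq0$, so telescoping (valid since $\dot\psi(n)\to0$) yields
\begin{equation*}
	\sum_k\ip{P_k(x)}{Q_k(y)}=\sum_{i,j}h_{i,j}\ip{\delta_{\cc^i(x)}}{\delta_{\cc^j(y)}}=\sum_{t=0}^{\infty}h_{m+t,n+t}=\dot\psi(m+n).
\end{equation*}
Adjoining $P_\pm(x)=c_\pm(\pm1)^{\dd(x,x_0)}$ and $Q_\pm(y)=(\pm1)^{\dd(y,x_0)}$ and using $\dd(x,x_0)+\dd(y,x_0)\equiv\dd(x,y)\pmod 2$, we get $\tilde\varphi(x,y)=\ip{P_+(x)}{Q_+(y)}+\ip{P_-(x)}{Q_-(y)}+\sum_k\ip{P_k(x)}{Q_k(y)}$ for all $x,y\in\X$. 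By Proposition~\ref{Grothendieck} applied to each piece and subadditivity of $\|\cdot\|_S$ (the $k$-sum grouped into a single factorization on $\bigoplus_k\ell^2(\X)$), $\tilde\varphi$ is a Schur multiplier with $\|\tilde\varphi\|_S\leq|c_+|+|c_-|+\sum_k\|\xi^{(k)}\|_2\|\eta^{(k)}\|_2=|c_+|+|c_-|+\|H\|_1$.

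\emph{(i)$\Rightarrow$(ii) and the lower bound $\|\tilde\varphi\|_S\geq|c_+|+|c_-|+\|H\|_1$.} Suppose $\tilde\varphi$ is a Schur multiplier, and for each $2\leq q<\infty$ fix a homogeneous subtree $\X_q\subseteq\X$ of degree $q+1$ containing $x_0$. Since $\X_q$ is a subtree, $\dd$ restricts to the graph distance of $\X_q$, so $\tilde\varphi|_{\X_q\times\X_q}$ is precisely the function $\widetilde{\varphi|_{\X_q}}$ attached to the radial function $\varphi|_{\X_q}$; as restriction to a subset never increases the Schur norm (immediate from Proposition~\ref{Grothendieck}), $\widetilde{\varphi|_{\X_q}}$ is a Schur multiplier with $\|\widetilde{\varphi|_{\X_q}}\|_S\leq\|\tilde\varphi\|_S$. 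The Hankel matrix attached to $\varphi|_{\X_q}$ is the very same $H$, so Theorem~\ref{maintheorem} forces $H$ to be of trace class --- this is~(ii), with the decomposition of $\dot\varphi$ supplied by the first part of the proof --- and gives $\|\widetilde{\varphi|_{\X_q}}\|_S=|c_+|+|c_-|+\big(1-\tfrac{1}{q}\big)\big\|\big(I-\tfrac{\tau}{q}\big)^{-1}H\big\|_1$ with the same $c_\pm$. By Lemma~\ref{HH2}, $\big(1-\tfrac{1}{q}\big)\big\|\big(I-\tfrac{\tau}{q}\big)^{-1}H\big\|_1\geq\tfrac{q-1}{q+1}\|H\|_1\to\|H\|_1$ as $q\to\infty$, so $\|\tilde\varphi\|_S\geq|c_+|+|c_-|+\|H\|_1$; combined with the upper bound this gives the asserted formula.

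The genuinely new ingredient compared with Theorem~\ref{maintheorem} --- and the step I expect to be the only one requiring care --- is the reduction to finite-degree subtrees in the lower-bound half: one must know that the comparison constant $\tfrac{q-1}{q+1}$ supplied by Lemma~\ref{HH2} is asymptotically sharp, which it is. Everything else (the telescoping identities, absolute convergence of the double series $\sum_{k,i,j}$, and the uniqueness of $c_\pm$ and $\dot\psi$) is routine and parallels the finite-degree proof essentially verbatim.
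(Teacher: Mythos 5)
Your proposal is correct and follows essentially the same route as the paper: the upper bound and (ii)$\implies$(i) are obtained by the unprimed analogue of the finite-degree factorization (with $\delta_{\cc^i(x)}$ in place of $\delta_{\cc^i(x)}'$ and Lemma~\ref{Smninf} in place of Lemma~\ref{Smn}), and (i)$\implies$(ii) with the lower bound by restricting to finite-degree subtrees $\X_q$ and letting $q\to\infty$ via Theorem~\ref{maintheorem} and Lemma~\ref{HH2}. This is exactly the argument given in the paper, so no further comments are needed.
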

\begin{proof}
	Let $\varphi|_{\X_q}$ be the restriction of $\varphi$ to $\X_q$ for $2\leq q<\infty$, where $\X_q$ is a homogeneous subtree of $\X$ of degree $q+1$ containing $x_0$. From Proposition~\ref{Grothendieck} it is easily seen that if $\tilde\varphi$ is a Schur multiplier, then the restriction $\tilde\varphi|_{\X_q\times\X_q}$ ($2\leq q<\infty$) is also a Schur multiplier, and $\|\tilde\varphi|_{\X_q\times\X_q}\|_S\leq\|\tilde\varphi\|_S$. Using this together with Theorem~\ref{maintheorem} and Lemma~\ref{HH2} it is easy to see that we are left with the task of proving {\it (ii)}$\implies${\it (i)} and the upper bound for $\|\tilde\varphi\|_S$. But this basically consists of taking the corresponding part of the proof of Theorem~\ref{maintheorem} and deleting all the primes, so we only provide a sketchy proof of this.
	
	Assume that the Hankel matrix $H=(h_{i,j})_{i,j\in\N_0}$ is of trace class, define $c_\pm$ and $\dot\psi$ as in the first part of the proof of Theorem~\ref{maintheorem} and note that
	\begin{equation}
		\label{psitraceinf}
		\dot\psi(i+j)=\trace(S^i(S^*)^jH)\qquad(i,j\in\N_0).
	\end{equation}
	Since $H$ is of trace class there exists sequences $(\xi^{(k)})_{k\in\N_0},(\eta^{(k)})_{k\in\N_0}$ in $\ell^2(\N_0)$ such that
	\begin{equation*}
		H=\sum_{k=0}^\infty\xi^{(k)}\odot\eta^{(k)}\quad\mbox{and}\quad\|H\|_1=\sum_{k=0}^\infty\|\xi^{(k)}\|_2\|\eta^{(k)}\|_2,
	\end{equation*}
	and therefore
	\begin{equation}
		\label{8.5x2}
		h_{i,j}=\sum_{k=0}^\infty\xi_i^{(k)}\bar\eta_j^{(k)}\qquad(i,j\in\N_0).
	\end{equation}
	For each $k\in\N_0$ we define $P_k,Q_k:X\to\ell^2(\X)$ by
	\begin{equation*}
		P_k(x)=\sum_{i=0}^\infty\xi_i^{(k)}\delta_{\cc^i(x)}\quad\mbox{and}\quad Q_k(y)=\sum_{j=0}^\infty\eta_j^{(k)}\delta_{\cc^j(y)}\qquad(x,y\in\X),
	\end{equation*}
	and note that
	\begin{equation*}
		\sum_{k=0}^\infty\|P_k\|_\infty\|Q_k\|_\infty=\|H\|_1.
	\end{equation*}
	Now verify that
	\begin{equation*}
		\sum_{k=0}^\infty\ip{P_k(x)}{Q_k(y)}=\sum_{k,i,j=0}^\infty\ip{\delta_{\cc^i(x)}}{\delta_{\cc^j(y)}}\xi_i^{(k)}\bar\eta_j^{(k)}=\sum_{i,j=0}^\infty\ip{\delta_{\cc^i(x)}}{\delta_{\cc^j(y)}} h_{i,j}
	\end{equation*}
	for all $x,y\in\X$. Momentarily fix $x,y\in\X$ and choose $m,n\in\N_0$ according to Remark~\ref{xymn}. Then $m+n=\dd(x,y)$ and by Lemma~\ref{Smninf}
	\begin{equation*}
		(S^m(S^*)^n)_{j,i}=\ip{\delta_{\cc^i(x)}}{\delta_{\cc^j(y)}}\qquad(i,j\in\N_0).
	\end{equation*}
	Using~\eqref{psitraceinf} it follows that
	\begin{eqnarray*}
		\sum_{k=0}^\infty\ip{P_k(x)}{Q_k(y)} & = & \sum_{i,j=0}^\infty (S^m(S^*)^n)_{j,i}h_{i,j}\\
		& = & \trace(S^m(S^*)^nH)\\
		& = & \dot\psi(m+n)\\
		& = & \tilde\psi(x,y).
	\end{eqnarray*}
	Since $x,y\in\X$ were arbitrary we have that
	\begin{equation*}
		\tilde\varphi(x,y)=\dot\varphi(\dd(x,y))=c_++c_-(-1)^{\dd(x,y)}+\sum_{k=0}^\infty\ip{P_k(x)}{Q_k(y)}\qquad(x,y\in\X).
	\end{equation*}
	We conclude that $\tilde\varphi$ is a Schur multiplier with
	\begin{equation*}
		\|\tilde\varphi\|_S\leq |c_+|+|c_-|+\sum_{k=0}^\infty\|P_k\|_\infty\|Q_k\|_\infty=|c_+|+|c_-|+\|H\|_1.
	\end{equation*}
\end{proof}
\begin{corollary}
	\label{sameDifference}
	Let $(\X,x_0)$ be a homogeneous tree of infinite degree with distinguished vertex $x_0$. Choose as before for each integer $2\leq q<\infty$ a homogeneous subtree $\X_q\subseteq\X$ of degree $q$ with $x_0\in\X_q$. Let $\dot\varphi:\N_0\to\C$ be given and define $\tilde\varphi:\X\times\X\to\C$ as in Proposition~\ref{radial}. Then $\tilde\varphi$ is a Schur multiplier if and only if $\tilde\varphi|_{\X_q\times\X_q}$ is a Schur multiplier. Moreover,
	\begin{equation*}
		\frac{q-1}{q+1}\|\tilde\varphi\|_S\leq\|\tilde\varphi|_{\X_q\times\X_q}\|_S\leq\|\tilde\varphi\|_S.
	\end{equation*}
\end{corollary}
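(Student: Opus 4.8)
The plan is to read the statement off the explicit formulas already in hand, using the fact that the Hankel matrix $H$ and the constants $c_\pm$ depend only on the sequence $\dot\varphi$ and not on the tree on which $\varphi$ lives. First I would dispose of the equivalence of the two Schur-multiplier conditions. Since $\X_q$ is a connected subtree of $\X$, the graph distance on $\X_q$ agrees with that on $\X$, so the radial function $\varphi|_{\X_q}\colon\X_q\to\C$ has associated two-variable function exactly $\tilde\varphi|_{\X_q\times\X_q}$. By Theorem~\ref{maintheoreminf}, $\tilde\varphi$ is a Schur multiplier on $\X\times\X$ iff $H$ is of trace class; by Theorem~\ref{maintheorem} applied to $(\X_q,x_0)$ and $\varphi|_{\X_q}$, the function $\tilde\varphi|_{\X_q\times\X_q}$ is a Schur multiplier iff the \emph{same} matrix $H$ is of trace class. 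Hence the two conditions coincide. (One direction, together with the inequality $\|\tilde\varphi|_{\X_q\times\X_q}\|_S\le\|\tilde\varphi\|_S$, is in any case already contained in the proof of Theorem~\ref{maintheoreminf}, obtained by restricting a Grothendieck factorization from $\X$ to $\X_q$ via Proposition~\ref{Grothendieck}.)

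Next, assume $H$ is of trace class and set $H'=\big(1-\tfrac1q\big)\big(I-\tfrac\tau q\big)^{-1}H$. The key point is that the decomposition $\dot\varphi(n)=c_++c_-(-1)^n+\dot\psi(n)$ with $\dot\psi(n)\to0$ is the same whether produced by Theorem~\ref{maintheorem} or by Theorem~\ref{maintheoreminf}, because $c_\pm=\tfrac12\lim_n\dot\varphi(2n)\pm\tfrac12\lim_n\dot\varphi(2n+1)$ involves only $\dot\varphi$. Consequently Theorem~\ref{maintheoreminf} gives $\|\tilde\varphi\|_S=|c_+|+|c_-|+\|H\|_1$ and Theorem~\ref{maintheorem} gives $\|\tilde\varphi|_{\X_q\times\X_q}\|_S=|c_+|+|c_-|+\|H'\|_1$.

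It then remains only to compare $\|H\|_1$ and $\|H'\|_1$, which is precisely Lemma~\ref{HH2}: $\tfrac{q-1}{q+1}\|H\|_1\le\|H'\|_1\le\|H\|_1$. Adding $|c_+|+|c_-|$ throughout gives the upper bound $\|\tilde\varphi|_{\X_q\times\X_q}\|_S=|c_+|+|c_-|+\|H'\|_1\le|c_+|+|c_-|+\|H\|_1=\|\tilde\varphi\|_S$, and for the lower bound I would write $\|\tilde\varphi|_{\X_q\times\X_q}\|_S=|c_+|+|c_-|+\|H'\|_1\ge|c_+|+|c_-|+\tfrac{q-1}{q+1}\|H\|_1\ge\tfrac{q-1}{q+1}\big(|c_+|+|c_-|+\|H\|_1\big)=\tfrac{q-1}{q+1}\|\tilde\varphi\|_S$, the middle inequality using only $\tfrac{q-1}{q+1}\le1$. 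If $H$ is not of trace class there is nothing to prove.

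I do not expect a genuine obstacle: all the analytic work has been done in Theorems~\ref{maintheorem} and~\ref{maintheoreminf} and in Lemma~\ref{HH2}, and this corollary is essentially a bookkeeping step. The two points deserving a moment's care are that $H$ and $c_\pm$ are tree-independent, so the two norm formulas can be subtracted cleanly, and, in the lower bound, remembering to invoke $\tfrac{q-1}{q+1}\le1$ in order to pull the factor past the $|c_+|+|c_-|$ term.
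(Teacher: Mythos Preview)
Your proposal is correct and follows exactly the approach the paper takes: the paper's proof is the one-line remark that the corollary follows from Theorem~\ref{maintheorem}, Theorem~\ref{maintheoreminf} and Lemma~\ref{HH2}, and you have simply written out the bookkeeping that this entails.
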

\begin{proof}
	This follows from Theorem~\ref{maintheorem}, Theorem~\ref{maintheoreminf} and Lemma~\ref{HH2}.
\end{proof}

	\section{Spherical functions on homogeneous trees}
\label{sphfct}
As in section~\ref{homtree}, we begin by considering a pair $(\X,x_0)$, where $\X$ is a homogeneous tree of degree $q+1$ for $2\leq q<\infty$ and $x_0$ is a distinguished vertex in $\X$. Later on we will also consider the case when $\X$ has infinite degree. We give only a brief introduction to spherical functions on homogeneous trees of finite degree---the reader is referred to~\cite{FTN:HarmonicAnalysisAndRepresentationTheoryForGroupsActingOnHhomogeneousTrees} for a more thorough exposition.

If $\varphi$ is a (complex valued) function on $\X$ we let (for any $x\in\X$) $L\varphi(x)$ denote the average value of $\varphi$ over the vertices which share an edge with $x$. The operator $L$ is called the \emph{Laplace operator} on $\X$. Following~\cite[Chapter~II, Definition~2.2]{FTN:HarmonicAnalysisAndRepresentationTheoryForGroupsActingOnHhomogeneousTrees}, we have:
\begin{definition}
	\label{spherical}
	Let $(\X,x_0)$ be a homogeneous tree of degree $q+1$ ($2\leq q<\infty$) with a distinguished vertex $x_0$. A radial function $\varphi:\X\to\C$ is called a \emph{spherical function} (on $(\X,x_0)$) if it satisfies
	\begin{itemize}
		\item [(i)]$\varphi(x_0)=1$
		\item [(ii)]$L\varphi=\eval\varphi$ for some $\eval\in\C$.
	\end{itemize}
\end{definition}
The number $\eval$ is called the \emph{eigenvalue} corresponding to the spherical function $\varphi$. Since $\varphi$ is radial it has the form
\begin{equation*}
	\varphi(x)=\dot\varphi(\dd(x,x_0))
\end{equation*}
for some $\dot\varphi:\N_0\to\C$ (cf.~Proposition~\ref{radial}). One can rewrite {\rm (i)} and {\rm (ii)} as
\begin{equation}
	\label{sph1}
	\dot\varphi(0)=1
\end{equation}
\begin{equation}
	\label{sph2}
	\dot\varphi(1)=\eval
\end{equation}
\begin{equation}
	\label{sph3}
	\dot\varphi(n+1)=\eval(1+\tfrac{1}{q})\dot\varphi(n)-\tfrac{1}{q}\dot\varphi(n-1)\qquad(n\in\N),
\end{equation}
cf.~\cite[page~42--43]{FTN:HarmonicAnalysisAndRepresentationTheoryForGroupsActingOnHhomogeneousTrees}. In particular, a spherical function is uniquely determined by its eigenvalue $\eval$. Despite of this, one does not label the spherical function by their eigenvalue---this is due to tradition and the fact that calculations seem to work out most easily using another indexation. For $z\in\C$ define the function $\varphi_z:\X\to\C$ by\footnote{The given expression (considered as a function of $z$ for fixed $x$) has a removable singularity when $q^{-z}=q^{z-1}$, or equivalently, when $z=\tfrac{1}{2}+i\tfrac{k\pi}{\ln(q)}$ for $k\in\Z$.}
\begin{equation}
	\label{varphiz2}
	\varphi_z(x)=f(z)h_z(x)+f(1-z)h_{1-z}(x)\qquad(x\in\X),
\end{equation}
where
\begin{equation}
	\label{hz}
	h_z(x)=q^{-z\dd(x,x_0)}\quad\mbox{and}\quad f(z)=(q+1)^{-1}\frac{q^{1-z}-q^{z-1}}{q^{-z}-q^{z-1}}\qquad(x\in\X).
\end{equation}
According to \cite{FTN:HarmonicAnalysisAndRepresentationTheoryForGroupsActingOnHhomogeneousTrees}, $\varphi_z$ is a spherical function on $(\X,x_0)$ with eigenvalue
\begin{equation}
	\label{sz}
	\eval_z=(1+\tfrac{1}{q})^{-1}(q^{-z}+q^{z-1})\qquad(z\in\C).
\end{equation}
The complex function
\begin{equation*}
	\cosh(z)=\tfrac{1}{2}(e^z+e^{-z})\qquad(z\in\C)
\end{equation*}
maps $\C$ onto $\C$ and $\cosh(z)=\cosh(z')$ if and only if $z'\in z+2\pi i\Z$ or $z'\in -z+2\pi i\Z$. Since
\begin{equation}
	\label{sz2}
	\eval_z=\frac{2\sqrt q}{q+1}\cosh(\ln(q)(z-\tfrac{1}{2}))\qquad(z\in\C)
\end{equation}
it follows that every spherical function $\varphi:\X\to\C$ is of the form $\varphi_z$ for a $z\in\C$ and that $\varphi_z=\varphi_{z'}$ if and only if
\begin{equation}
	\label{zzp}
	z'\in z+i\frac{2\pi}{\ln(q)}\Z\quad\mbox{or}\quad z'\in 1-z+i\frac{2\pi}{\ln(q)}\Z.
\end{equation}

From the definition of $\varphi_z$ it is easily seen that $\varphi_z$ is bounded if and only if $0\leq\Re(z)\leq1$. Since a Schur multiplier must be bounded, $\tilde\varphi_z$ is not a Schur multiplier if $\Re(z)<0$ or $\Re(z)>1$. The following theorem states which of the remaining $z$'s give rise to Schur multipliers and explicitly specifies the corresponding Schur norm.
\begin{theorem}
	\label{sphSchurNorm}
	Let $(\X,x_0)$ be a homogeneous tree of degree $q+1$ ($2\leq q<\infty$) with a distinguished vertex $x_0\in\X$. For $z\in\C$ let $\varphi_z:\X\to\C$ be the spherical function given by~\eqref{varphiz2} and let $\tilde\varphi_z:\X\times\X\to\C$ be the corresponding function as in Proposition~\ref{radial}. Then $\tilde\varphi_z$ is a Schur multiplier if and only if $z$ is in the set
	\begin{equation*}
		\setw{z\in\C }{ 0<\Re(z)<1 }\bigcup\setw{i\frac{k\pi}{\ln(q)} }{ k\in\Z }\bigcup\setw{1-i\frac{k\pi}{\ln(q)} }{ k\in\Z }.
	\end{equation*}
	The corresponding norm is given by
	\begin{equation*}
		\|\tilde\varphi_z\|_S=\frac{(1-\tfrac{1}{q})^2|1-q^{-2z}||1-q^{2z-2}|}{(1-q^{-2\Re(z)})(1-q^{2\Re(z)-2})|1-q^{2i\Im(z)-1}|^2}\qquad(0<\Re(z)<1)
	\end{equation*}
	and
	\begin{equation*}
		\|\tilde\varphi_z\|_S=1\qquad(z\in\setw{i\frac{k\pi}{\ln(q)} }{ k\in\Z }\bigcup\setw{1-i\frac{k\pi}{\ln(q)} }{ k\in\Z }).
	\end{equation*}
\end{theorem}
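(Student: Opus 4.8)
The plan is to derive this from Theorem~\ref{maintheorem}. Put $w=q^{-z}$ and $v=q^{z-1}$, so $wv=q^{-1}$; by~\eqref{varphiz2} and~\eqref{hz} we have $\dot\varphi_z(n)=f(z)w^n+f(1-z)v^n$, so the Hankel matrix of Theorem~\ref{maintheorem} is
\[
h_{i,j}=f(z)(1-w^2)w^{i+j}+f(1-z)(1-v^2)v^{i+j}\qquad(i,j\in\N_0),
\]
that is, $H=f(z)(1-w^2)\,\xi_w\odot\eta_w+f(1-z)(1-v^2)\,\xi_v\odot\eta_v$ with $\xi_w=(w^i)_{i\in\N_0}$, $\eta_w=(\bar w^j)_{j\in\N_0}$ (and likewise for $v$). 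Since $\tilde\varphi_z$ can be a Schur multiplier only when $\varphi_z$ is bounded, I may assume $0\le\Re(z)\le1$. Now $\xi_w\in\ell^2(\N_0)$ iff $|w|<1$, i.e.\ $\Re(z)>0$, and $\xi_v\in\ell^2(\N_0)$ iff $\Re(z)<1$; so if $0<\Re(z)<1$ then $H$ is automatically trace class. On the line $\Re(z)=0$ the coefficient $f(z)$ never vanishes (indeed $f(z)=0$ iff $z\in 1-i\tfrac{\pi}{\ln q}\Z$), so the term carrying $\xi_w$ (with $|w|=1$) is trace class only if $1-w^2=0$, i.e.\ $w=\pm1$, which happens exactly for $z\in\setw{ik\pi/\ln q}{k\in\Z}$; symmetrically, on $\Re(z)=1$ one is forced into $z\in\setw{1-ik\pi/\ln q}{k\in\Z}$. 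By Theorem~\ref{maintheorem} this is precisely the asserted set of $z$ for which $\tilde\varphi_z$ is a Schur multiplier.

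For those boundary values, substituting $q^{-z}=\pm1$ (resp.\ $q^{z-1}=\pm1$) into~\eqref{hz} gives $f(z)=1$ and $f(1-z)=0$ (resp.\ the statement obtained under $z\leftrightarrow1-z$), so that $\dot\varphi_z(n)=(\pm1)^n$ and $\tilde\varphi_z(x,y)=(\pm1)^{\dd(x,y)}$. Here $H=0$ while $|c_+|+|c_-|=1$, and Theorem~\ref{maintheorem} yields $\|\tilde\varphi_z\|_S=1$, as claimed.

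Now fix $z$ with $0<\Re(z)<1$. Then $|w|,|v|<1$, so $\dot\varphi_z(n)\to0$, whence $c_\pm=0$ and $\|\tilde\varphi_z\|_S=\|H'\|_1$ with $H'=(1-\tfrac1q)\big(I-\tfrac\tau q\big)^{-1}H$. The first concrete step is to put $H'$ into closed form. Writing $\gamma=q^{2z-1}$ (so that $\gamma=(qw^2)^{-1}=qv^2$) and using $\big(I-\tfrac\tau q\big)^{-1}=\sum_{n\ge0}q^{-n}\tau^n$ with $\tau^n(\xi\odot\eta)=(S^n\xi)\odot(S^n\eta)$ — equivalently iterating $H'-\tfrac1q SH'S^*=(1-\tfrac1q)H$ along diagonals — one computes, for $a,b\in\N_0$ and $\gamma\neq1$ (i.e.\ $\Re(z)\neq\tfrac12$; the line $\Re(z)=\tfrac12$ is then recovered by continuity in $z$),
\[
H'_{a,b}=\frac{1-\tfrac1q}{1-\gamma}\Big(f(z)(1-w^2)\,w^{\max(a,b)}+f(1-z)(1-v^2)\,v^{\max(a,b)}\Big)\Big(w^{\min(a,b)}-\gamma\,v^{\min(a,b)}\Big).
\]
Thus $H'=\tfrac{1-1/q}{1-\gamma}M$, where $M_{a,b}=u_{\max(a,b)}\,s_{\min(a,b)}$ with $u=f(z)(1-w^2)\xi_w+f(1-z)(1-v^2)\xi_v$ and $s=\xi_w-\gamma\xi_v$, both in $\ell^2(\N_0)$; note $u$ and $s$ both solve $x_{n+1}=(w+v)x_n-q^{-1}x_{n-1}$, so $M$ is a ``Green's‑function type'' kernel and in fact the inverse of an (unbounded) tridiagonal operator.

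It remains to evaluate $\|H'\|_1=\tfrac{1-1/q}{|1-\gamma|}\|M\|_1$ and to match it with the stated expression
\[
\frac{(1-\tfrac1q)^2\,|1-q^{-2z}|\,|1-q^{2z-2}|}{(1-q^{-2\Re(z)})(1-q^{2\Re(z)-2})\,|1-q^{2i\Im(z)-1}|^2}.
\]
This exact trace‑norm computation is the main obstacle, since $M$ is infinite rank and not self‑adjoint (except on the critical line $\Re(z)=\tfrac12$, where one checks that $H'$ is real symmetric). The plan is to pin $\|H'\|_1$ down by two matching estimates. For the upper bound I would decompose $M=\sum_k\xi^{(k)}\odot\eta^{(k)}$ — equivalently exhibit explicit maps $P,Q\colon\X\to\Hil$ with $\tilde\varphi_z(x,y)=\ip{P(x)}{Q(y)}$, built from $u$ and $s$ along the chains $[x,\omega)$ using the vectors $\delta_{\cc^i(x)}'$ of Lemma~\ref{Smn} — choosing the polar decomposition of $M$, which can be read off from the tridiagonal structure of $M^{-1}$, so that $\sum_k\|\xi^{(k)}\|_2\|\eta^{(k)}\|_2$ equals the claimed value. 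For the matching lower bound I would produce a norm‑one element of $\cstar(U)$ — equivalently a bounded operator $B$ with $\|B\|=1$ and $\trace(H'B)$ equal to the claimed value — again from the polar decomposition; the cleanest way to guess and verify the relevant partial isometry is to treat first the critical line $\Re(z)=\tfrac12$ (where $v=\bar w$, $|\gamma|=1$, the eigenvalue $\eval_z$ is real and the formula collapses to $1$) and then propagate through the strip. Once $\|H'\|_1$ is found, a routine algebraic manipulation rewrites it in the displayed form, and, via $\eval_z=(1+\tfrac1q)^{-1}(q^{-z}+q^{z-1})$, into the form of Theorem~\ref{Theorem0.3}. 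I expect the factor $|1-q^{2i\Im(z)-1}|^2$ in the denominator to be exactly what the polar‑decomposition step produces, and getting that constant right to be the heart of the argument.
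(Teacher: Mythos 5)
Your reduction to Theorem~\ref{maintheorem}, your treatment of the boundary cases $z\in i\tfrac{\pi}{\ln q}\Z\cup\bigl(1+i\tfrac{\pi}{\ln q}\Z\bigr)$, and your identification of the admissible set of $z$ all follow the paper's route and are essentially fine (the paper rules out $\Re(z)=0$, $z\notin i\tfrac{\pi}{\ln q}\Z$ via the non-existence of $\lim_n\dot\varphi(2n)$, which is the rigorous version of your ``the $|w|=1$ term forces $1-w^2=0$'' remark). But for $0<\Re(z)<1$ the proposal stops exactly where the theorem lives: you reduce everything to evaluating $\|H'\|_1$ with $H'=\bigl(1-\tfrac1q\bigr)\bigl(I-\tfrac\tau q\bigr)^{-1}H$, declare that this ``exact trace-norm computation is the main obstacle, since $M$ is infinite rank,'' and then only sketch a plan of matching upper and lower estimates via a polar decomposition you do not produce. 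That is a genuine gap: the exact value of $\|\tilde\varphi_z\|_S$ is the content of the statement, and no part of your outline actually computes it.

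Moreover, the claim that your kernel $M_{a,b}=u_{\max(a,b)}\,s_{\min(a,b)}$ has infinite rank is false, and noticing this is precisely the missing idea. With $w=q^{-z}$, $v=q^{z-1}$, $wv=\tfrac1q$, one has $s_n=w^n-\tfrac{v}{w}v^n=\tfrac{w^{n+1}-v^{n+1}}{w}$ and $u_n=h_{n,0}=\bigl(1+\tfrac1q\bigr)^{-1}(1-w^2)(1-v^2)\,\tfrac{w^{n+1}-v^{n+1}}{w-v}$, so $u$ and $s$ are both multiples of $g_n=\tfrac{w^{n+1}-v^{n+1}}{w-v}$ and your closed form for $H'$ is (correctly) the rank-one operator $h'_{i,j}=\bigl(1-\tfrac1q\bigr)\bigl(1+\tfrac1q\bigr)^{-1}(1-w^2)(1-v^2)\,g_ig_j$ — this is exactly what the paper exhibits, obtained directly by checking $H'-\tfrac1q SH'S^*=\bigl(1-\tfrac1q\bigr)H$ using $wv=\tfrac1q$. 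Once $H'=\alpha\,\xi\odot\eta$ with $\xi_n=g_n=\bar\eta_n$, equation~\eqref{1x} gives $\|H'\|_1=|\alpha|\,\|\xi\|_2^2=|\alpha|\sum_n|g_n|^2$, and the explicit summation identity $\sum_{n\ge0}\tfrac{a^{n+1}-b^{n+1}}{a-b}\cdot\tfrac{c^{n+1}-d^{n+1}}{c-d}=\tfrac{1-abcd}{(1-ac)(1-bd)(1-ad)(1-bc)}$ with $c=\bar a$, $d=\bar b$ yields the displayed norm; no polar decomposition, lower-bound operator $B$, or ``propagation from the critical line'' is needed. (Two smaller points: the case $\Re(z)=\tfrac12$, i.e.\ $w=v$ up to the $\gamma=1$ degeneracy you exclude, should be handled by the direct replacement $g_n=(n+1)w^n$ as in the paper's footnote rather than by an unproved continuity of $z\mapsto\|\tilde\varphi_z\|_S$; and your separate-term argument on $\Re(z)=0$ should be phrased via summability of the diagonal of a trace-class matrix, since $\xi_w\odot\eta_w$ is not an operator when $|w|=1$.)
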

\begin{proof}
	Since $\eval_z=\eval_{1-z}$ for all $z\in\C$ it is enough to consider $z\in\C$ for which $\Re(z)\leq\tfrac{1}{2}$. The case $\Re(z)<0$ has already been considered above, with the conclusion that $\tilde\varphi_z$ is not bounded and hence not a Schur multiplier. Next we treat the case $\Re(z)=0$, so write $z=it$ with $t\in\R$. Theorem~\ref{maintheorem} tells us that if $\tilde\varphi_{it}$ is a Schur multiplier, then $\lim_{n\to\infty}\dot\varphi_{it}(2n)$ must exist (and equal $c_++c_-$). But it is easily seen from~\eqref{varphiz2} that
	\begin{equation*}
		\lim_{n\to\infty}(\dot\varphi_{it}(2n)-f(it)q^{-2n it})=0
	\end{equation*}
	and from~\eqref{hz} that $f(it)\neq0$.
	Hence, the only $t\in\R$ where $\lim_{n\to\infty}\dot\varphi_{it}(2n)$ exists is $t=\frac{k\pi}{\ln(q)}$ for $k\in\Z$.
	From~\eqref{varphiz2} and~\eqref{hz} we get that for $n\in\N_0$,
	\begin{equation*}
		\dot\varphi_{i\frac{k\pi}{\ln(q)}}(n)=\left\{
		\begin{array}{lll}
			1 & \mbox{if} & k\mbox{ is even}\\
			(-1)^n & \mbox{if} & k\mbox{ is odd}
		\end{array}
		\right..
	\end{equation*}
	Hence for $x,y\in\X$,
	\begin{equation*}
		\tilde\varphi_{i\frac{k\pi}{\ln(q)}}(x,y)=\left\{
		\begin{array}{lll}
			1 & \mbox{if} & k\mbox{ is even}\\
			(-1)^{\dd(x,y)} & \mbox{if} & k\mbox{ is odd}
		\end{array}
		\right..
	\end{equation*}
	Since $(-1)^{\dd(x,y)}=(-1)^{\dd(x,x_0)}(-1)^{\dd(y,x_0)}$ it follows that $\tilde\varphi_{i\frac{k\pi}{\ln(q)}}$ is a Schur multiplier with Schur norm $1$ for all $k\in\Z$.
	
	We are left with the case $0<\Re(z)\leq\tfrac{1}{2}$. In order to show that $\tilde\varphi_z$ is a Schur multiplier it is enough, according to Theorem~\ref{maintheorem} and Lemma~\ref{HH}, to verify that $H'$ given by $H'=\big(I-\tfrac{\tau}{q}\big)^{-1}H$, where $H$ is the Hankel matrix $H=(h_{i,j})_{i,j\in\N_0}$ with entries $h_{i,j}=\dot\varphi_z(i+j)-\dot\varphi_z(i+j+2)$, is of trace class. To find the corresponding Schur norm we must compute the actual trace class norm of $H'$ and also find $c_\pm$ from Theorem~\ref{maintheorem}. But since $\lim_{n\to\infty}\dot\varphi(n)=0$ when $0<\Re(z)\leq\tfrac{1}{2}$ we conclude that $c_\pm=0$, so we are left with just calculating $\|H'\|_1$.
	
	Fix a $z$ with $0<\Re(z)\leq\tfrac{1}{2}$, put $a=q^{-z}$ and $b=q^{z-1}$ and note that $|a|,|b|<1$. From~\eqref{varphiz2} we find, after some manipulations, that\footnote{Here we again run into the problem of a singularity whenever $a=b$, but all calculations can be done by replacing $\frac{a^{n+1}-b^{n+1}}{a-b}$ with $(n+1)a^n$ (for $n\in\N_0$) when this happens.}
	\begin{equation*}
		h_{i,j}=(1+\tfrac{1}{q})^{-1}(1-a^2)(1-b^2)\frac{a^{i+j+1}-b^{i+j+1}}{a-b}\qquad(i,j\in\N_0).
	\end{equation*}
	From this it is quite easy to verify that
	\begin{equation*}
		h'_{i,j}=(1-\tfrac{1}{q})(1+\tfrac{1}{q})^{-1}(1-a^2)(1-b^2)\frac{a^{i+1}-b^{i+1}}{a-b}\frac{a^{j+1}-b^{j+1}}{a-b}\qquad(i,j\in\N_0),
	\end{equation*}
	where $H'=(h'_{i,j})_{i,j\in\N_0}$. Now observe that
	\begin{equation*}
		H'=\alpha\xi\odot\eta,
	\end{equation*}
	where
	\begin{equation*}
		\alpha=(1-\tfrac{1}{q})(1+\tfrac{1}{q})^{-1}(1-a^2)(1-b^2)
	\end{equation*}
	and $\xi=(\xi_n)_{n\in\N_0},\eta=(\eta_n)_{n\in\N_0}\in\ell^2(\N_0)$ are given by
	\begin{equation*}
		\xi_n=\frac{a^{n+1}-b^{n+1}}{a-b}=\bar\eta_n\qquad(n\in\N_0).
	\end{equation*}
	From~\eqref{1x} we conclude that
	\begin{equation*}
		\|H'\|_1=|\alpha|\|\xi\|_2\|\eta\|_2=|\alpha|\sum_{n=0}^\infty\left|\frac{a^{n+1}-b^{n+1}}{a-b}\right|^2.
	\end{equation*}
	Simple, but tedious, computations shows that\footnote{This formula also holds for $a=b$ and $c=d$ when $\frac{a^{n+1}-b^{n+1}}{a-b}$ and $\frac{c^{n+1}-d^{n+1}}{c-d}$ are replaced by $(n+1)a^n$ and $(n+1)c^n$, respectively.}
	\begin{equation*}
		\sum_{n=0}^\infty\frac{a^{n+1}-b^{n+1}}{a-b}\frac{c^{n+1}-d^{n+1}}{c-d}=\frac{1-a b c d}{(1-a c)(1-b d)(1-a d)(1-b c)},
	\end{equation*}
	when $a,b,c,d\in\C$ satisfy $|a|,|b|,|c|,|d|<1$. Using this result with $c=\bar a$ and $d=\bar b$ we find that
	\begin{equation*}
		\|H'\|_1=\frac{(1-\tfrac{1}{q})^2|1-a^2||1-b^2|}{(1-a\bar a)(1-b\bar b)(1-a\bar b)(1-b\bar a)},
	\end{equation*}
	which is easily seen to finish the proof.
\end{proof}
We now reformulate the above theorem in terms of eigenvalues.
\begin{maintheorem}
	\label{sphSchurNormEigenvalue}
	Let $(\X,x_0)$ be a homogeneous tree of degree $q+1$ ($2\leq q<\infty$) with a distinguished vertex $x_0\in\X$. Let $\varphi:\X\to\C$ be a spherical function and let $\tilde\varphi:\X\times\X\to\C$ be the corresponding function as in Proposition~\ref{radial}. Then $\tilde\varphi$ is a Schur multiplier if and only if the eigenvalue $\eval$ corresponding to $\varphi$ is in the set
	\begin{equation*}
		\setw{\eval\in\C }{ \Re(\eval)^2+\big(\tfrac{q+1}{q-1}\big)^2\Im(\eval)^2<1 }\bigcup\seto{\pm1}.
	\end{equation*}
	The corresponding Schur norm is given by
	\begin{equation*}
		\|\tilde\varphi\|_S=\frac{|1-\eval^2|}{1-\Re(\eval)^2-\left(\frac{q+1}{q-1}\right)^2\Im(\eval)^2}\qquad(\Re(\eval)^2+\big(\tfrac{q+1}{q-1}\big)^2\Im(\eval)^2<1)
	\end{equation*}
	and
	\begin{equation*}
		\|\tilde\varphi\|_S=1\qquad(\eval=\pm1).
	\end{equation*}
\end{maintheorem}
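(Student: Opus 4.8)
The plan is to read off Theorem~\ref{sphSchurNormEigenvalue} from Theorem~\ref{sphSchurNorm} by re-expressing both the admissible set of parameters $z$ and the norm formula in terms of the eigenvalue $\eval=\eval_z$. Since $\eval_z=\eval_{1-z}$, I may, as in the proof of Theorem~\ref{sphSchurNorm}, restrict attention to $\Re(z)\le\tfrac12$. The two ingredients are: (a) computing the image under $z\mapsto\eval_z$ of the set appearing in Theorem~\ref{sphSchurNorm}; and (b) an algebraic identity turning the $z$-expression for $\|\tilde\varphi_z\|_S$ into the claimed $\eval$-expression.

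For (a) I would use the parametrization~\eqref{sz2}, $\eval_z=\tfrac{2\sqrt q}{q+1}\cosh\!\big(\ln(q)(z-\tfrac12)\big)$, and analyze $\cosh$ on the vertical strip $\setw{w\in\C}{|\Re(w)|<\tfrac{\ln(q)}{2}}$, which is where $w=\ln(q)(z-\tfrac12)$ lies when $0<\Re(z)<1$. Writing $w=u+iv$ one has $\cosh(w)=\cosh(u)\cos(v)+i\sinh(u)\sin(v)$, so for fixed $u\ne0$ the curve $v\mapsto\cosh(u+iv)$ is the ellipse with semi-axes $\cosh(u)$ and $|\sinh(u)|$, while for $u=0$ it is the segment $[-1,1]$; using that $\cosh$ is even in $u$ and letting $u$ vary over $\big[0,\tfrac{\ln(q)}{2}\big)$, these confocal ellipses sweep out exactly the open region bounded by the ellipse with semi-axes $\cosh(\tfrac{\ln(q)}{2})=\tfrac{q+1}{2\sqrt q}$ and $\sinh(\tfrac{\ln(q)}{2})=\tfrac{q-1}{2\sqrt q}$, so that in particular the horizontal extremes $\pm\tfrac{q+1}{2\sqrt q}$ are not attained. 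Scaling by $\tfrac{2\sqrt q}{q+1}$ turns this region into $\setw{\eval\in\C}{\Re(\eval)^2+\big(\tfrac{q+1}{q-1}\big)^2\Im(\eval)^2<1}$, which is therefore the image of $\setw{z}{0<\Re(z)<1}$. For the remaining parameters $z=i\tfrac{k\pi}{\ln(q)}$ and $z=1-i\tfrac{k\pi}{\ln(q)}$ from Theorem~\ref{sphSchurNorm}, I would check directly---e.g.\ from the values of $\dot\varphi_{ik\pi/\ln(q)}$ already computed in the proof of Theorem~\ref{sphSchurNorm} together with~\eqref{sph2}---that $\eval_z=(-1)^k=\pm1$; since $\pm1$ lies on the boundary ellipse and not inside it, it is not attained from the open strip, and so the total image is precisely $\setw{\eval\in\C}{\Re(\eval)^2+\big(\tfrac{q+1}{q-1}\big)^2\Im(\eval)^2<1}\cup\seto{\pm1}$.

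For (b), fix $z$ with $0<\Re(z)<1$, set $a=q^{-z}$ and $b=q^{z-1}$ as in the proof of Theorem~\ref{sphSchurNorm}, and record the relations $ab=\tfrac1q$, $a+b=\tfrac{q+1}{q}\eval$, $\bar a+\bar b=\tfrac{q+1}{q}\bar\eval$, $a\bar a=q^{-2\Re(z)}$, $b\bar b=q^{2\Re(z)-2}$, $a\bar b=\overline{b\bar a}=q^{-1}q^{-2i\Im(z)}$. The numerator of the formula in Theorem~\ref{sphSchurNorm} then simplifies via $(1-a^2)(1-b^2)=(1+\tfrac1q)^2-(a+b)^2=\tfrac{(q+1)^2}{q^2}(1-\eval^2)$, while a routine (if longer) expansion of $(1-a\bar a)(1-b\bar b)(1-a\bar b)(1-b\bar a)$ using only $a+b$, $\bar a+\bar b$ and $ab$ gives $\tfrac{(q-1)^2(q+1)^2}{q^4}\big(1-\Re(\eval)^2-(\tfrac{q+1}{q-1})^2\Im(\eval)^2\big)$. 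Since $(1-\tfrac1q)^2\cdot\tfrac{(q+1)^2}{q^2}=\tfrac{(q-1)^2(q+1)^2}{q^4}$, dividing gives $\|\tilde\varphi_z\|_S=\tfrac{|1-\eval^2|}{1-\Re(\eval)^2-(\frac{q+1}{q-1})^2\Im(\eval)^2}$ on $0<\Re(z)<1$; and the value $1$ at $\eval=\pm1$ is exactly what Theorem~\ref{sphSchurNorm} records for the boundary parameters. I would finish by noting that both expressions are symmetric under $a\leftrightarrow b$, i.e.\ under $z\leftrightarrow 1-z$, so they really are functions of $\eval$ alone, and that the apparent singularity $a=b$ (at $z=\tfrac12+i\tfrac{k\pi}{\ln(q)}$) is absorbed by the same limiting convention used in Theorem~\ref{sphSchurNorm}.

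The step I expect to be the main obstacle is (a): correctly identifying the image of the strip under $\cosh$ as an \emph{open} elliptical region, and checking that the two excluded boundary lines $\Re(z)\in\seto{0,1}$ contribute exactly the two points $\pm1$ and nothing more---in particular that no $z$ in the open strip maps to $\pm1$, which is what makes the union $\setw{\eval\in\C}{\dots<1}\cup\seto{\pm1}$ come out cleanly. By contrast, the algebra in (b) is mechanical once $a+b=\tfrac{q+1}{q}\eval$ and $ab=\tfrac1q$ are substituted systematically, so I do not anticipate genuine difficulty there.
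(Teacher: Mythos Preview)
Your proposal is correct and follows essentially the same route as the paper: reduce to Theorem~\ref{sphSchurNorm}, identify the image of the admissible $z$-set under $z\mapsto\eval_z$ as the stated ellipse together with $\seto{\pm1}$, and verify the algebraic identity between the two norm formulas via the substitutions $a=q^{-z}$, $b=q^{z-1}$ with $ab=\tfrac1q$ and $a+b=(1+\tfrac1q)\eval$. The paper simply asserts the image computation in (a) as an observation, whereas you supply an actual argument through the confocal-ellipse description of $\cosh$ on the strip; your algebra in (b) is the same as the paper's, only organized slightly differently.
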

\begin{proof}
	Observe that $z\mapsto\eval_z$ maps $\setw{z\in\C }{ 0<\Re(z)<1 }$ onto $\setw{\eval\in\C }{ \Re(\eval)^2+\big(\tfrac{q+1}{q-1}\big)^2\Im(\eval)^2<1 }$ and maps $\setw{i\frac{k\pi}{\ln(q)} }{ k\in\Z }\bigcup\setw{1-i\frac{k\pi}{\ln(q)} }{ k\in\Z }$ onto $\seto{\pm1}$. The Corollary now follows readily from Theorem~\ref{sphSchurNorm} once we have shown that
	\begin{eqnarray}
		& & \frac{(1-\tfrac{1}{q})^2|1-q^{-2z}||1-q^{2z-2}|}{(1-q^{-2\Re(z)})(1-q^{2\Re(z)-2})|1-q^{2i\Im(z)-1}|^2}\label{*N}\\
		& = & \frac{|1-\eval_z^2|}{1-\Re(\eval_z)^2-\left(\frac{q+1}{q-1}\right)^2\Im(\eval_z)^2}\label{**N}
	\end{eqnarray}
	for $z\in\C$ with $0<\Re(z)<1$.
	
	Fix a $z$ with $0<\Re(z)<1$ and (as in the proof of Theorem~\ref{sphSchurNorm}) put $a=q^{-z}$ and $b=q^{z-1}$ and note that $|a|,|b|<1$. In order to eliminate $a$ and $b$ (in exchange for expressions involving the eigenvalue) we observe the following relations
	\begin{equation}
		\label{abmu}
		a b=\tfrac{1}{q},\quad a+b=(1+\tfrac{1}{q})\eval_z\quad\mbox{and}\quad a^2+b^2=(1+\tfrac{1}{q})^2\eval_z^2-\tfrac{2}{q},
	\end{equation}
	where the first two relations are obvious, and the third one follows from the first two through $a^2+b^2=(a+b)^2-2a b$. First, look at the nominator in~\eqref{*N} and use~\eqref{abmu} (and a little work) to arrive at
	\begin{equation*}
		|1-q^{-2z}||1-q^{2z-2}|=|1-a^2-b^2+a^2b^2|=(1+\tfrac{1}{q})^2|1-\eval_z^2|.
	\end{equation*}
	Now, look at the denominator in~\eqref{*N} and use~\eqref{abmu} to find that
	\begin{eqnarray*}
		&&(1-q^{-2\Re(z)})(1-q^{2\Re(z)-2})|1-q^{2i\Im(z)-1}|^2\\
		& = & (1-\bar a a)(1-\bar b b)(1-a\bar b)(1-\bar a b)\\
		& = & (1+\tfrac{1}{q})^2(1-\tfrac{1}{q})^2+\tfrac{2}{q}(1+\tfrac{1}{q})^2\Re(\eval_z^2)-(1+\tfrac{1}{q^2})(1+\tfrac{1}{q})^2\bar\eval_z\eval_z.
	\end{eqnarray*}
	Use
	\begin{equation*}
		\Re(\eval_z^2)=\Re(\eval_z)^2-\Im(\eval_z)^2\quad\mbox{and}\quad \bar\eval_z\eval_z=\Re(\eval_z)^2+\Im(\eval_z)^2
	\end{equation*}
	to continue the above calculation and arrive at
	\begin{eqnarray*}
		&&(1-q^{-2\Re(z)})(1-q^{2\Re(z)-2})|1-q^{2i\Im(z)-1}|^2\\
		& = & (1+\tfrac{1}{q})^2(1-\tfrac{1}{q})^2(1-\Re(\eval_z)^2-\big(\tfrac{q+1}{q-1}\big)^2\Im(\eval_z)^2).
	\end{eqnarray*}
	Putting the calculations for the nominator together with the calculations for the denominator we easily arrive at~\eqref{**N}.
\end{proof}

\bigskip\noindent
To define the spherical functions on $(\X,x_0)$ when $\X$ has infinite degree, we generalize the \emph{Laplace operator} $L$ to the case of infinite degree. However, this only makes sense for radial functions. If $\varphi$ is a radial function on $\X$ let $L\varphi$ denote the radial function on $\X$ for which
\begin{equation*}
	L\varphi(x)=\dot\varphi(\dd(x,x_0)+1)\qquad(x\in\X),
\end{equation*}
where $\dot\varphi$ is connected to $\varphi$ as in Proposition~\ref{radial}.
\begin{definition}
	\label{sphericalinf}
	Let $(\X,x_0)$ be a homogeneous tree of infinite degree with a distinguished vertex $x_0$. A radial function $\varphi:\X\to\C$ is called a \emph{spherical function} (on $(\X,x_0)$) if it satisfies
	\begin{itemize}
		\item [(i)]$\varphi(x_0)=1$
		\item [(ii)]$L\varphi=\eval\varphi$ for some $\eval\in\C$.
	\end{itemize}
\end{definition}
The number $\eval$ is again called the \emph{eigenvalue} corresponding to the spherical function $\varphi$. Using that
\begin{equation*}
	\varphi(x)=\dot\varphi(\dd(x,x_0))
\end{equation*}
for a function $\dot\varphi:\N_0\to\C$ one can now rewrite {\rm (i)} and {\rm (ii)} as
\begin{equation}
	\label{sph1inf}
	\dot\varphi(0)=1
\end{equation}
\begin{equation}
	\label{sph2inf}
	\dot\varphi(1)=\eval
\end{equation}
\begin{equation}
	\label{sph3inf}
	\dot\varphi(n+1)=\eval\dot\varphi(n)\qquad(n\in\N).
\end{equation}
Note that~\eqref{sph1inf}--\eqref{sph3inf} can be considered as the limits for $q$ going to infinity of~\eqref{sph1}--\eqref{sph3}. Obviously, the spherical function corresponding to the eigenvalue $\eval\in\C$ is given by
\begin{equation}
	\label{varphiz3}
	\dot\varphi(n)=\eval^n\qquad(n\in\N_0).
\end{equation}
As a corollary to Theorem~\ref{maintheoreminf} we get the following theorem.
\begin{theorem}
	\label{sphSchurNorminf}
	Let $(\X,x_0)$ be a homogeneous tree of infinite degree with a distinguished vertex $x_0\in\X$. Let $\varphi:\X\to\C$ be a spherical function and let $\tilde\varphi:\X\times\X\to\C$ be the corresponding function as in Proposition~\ref{radial}. Then $\tilde\varphi$ is a Schur multiplier if and only if the eigenvalue $\eval$ corresponding to $\varphi$ is in the set
	\begin{equation*}
		\setw{\eval\in\C }{ |\eval|<1 }\bigcup\seto{\pm1}.
	\end{equation*}
	The corresponding norm is given by
	\begin{equation*}
		\|\tilde\varphi\|_S=\frac{|1-\eval^2|}{1-|\eval|^2}\qquad(|\eval|<1)
	\end{equation*}
	and
	\begin{equation*}
		\|\tilde\varphi\|_S=1\qquad(\eval=\pm1).
	\end{equation*}
\end{theorem}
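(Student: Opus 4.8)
The plan is to derive this directly from Theorem~\ref{maintheoreminf}, in the same spirit as Theorems~\ref{sphSchurNorm} and~\ref{sphSchurNormEigenvalue} were derived from Theorem~\ref{maintheorem}, but with far less computation: in the infinite-degree case the spherical function with eigenvalue $\eval$ is simply $\dot\varphi(n)=\eval^n$ by~\eqref{varphiz3}. First I would write down the Hankel matrix of Theorem~\ref{maintheoreminf},
\begin{equation*}
	h_{i,j}=\dot\varphi(i+j)-\dot\varphi(i+j+2)=(1-\eval^2)\eval^{i+j}\qquad(i,j\in\N_0),
\end{equation*}
and observe that $H=(h_{i,j})_{i,j\in\N_0}$ is the rank-one operator $\xi\odot\eta$ with $\xi_n=(1-\eval^2)\eval^n$ and $\eta_n=\bar\eval^n$. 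Theorem~\ref{maintheoreminf} then reduces everything to deciding when $H$ is of trace class and, when it is, computing $\|H\|_1$ and the constants $c_\pm$.

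Next I would split into cases according to $|\eval|$. If $|\eval|<1$, then $\xi,\eta\in\ell^2(\N_0)$ with $\|\xi\|_2=|1-\eval^2|(1-|\eval|^2)^{-1/2}$ and $\|\eta\|_2=(1-|\eval|^2)^{-1/2}$, so $H$ is of trace class and~\eqref{1x} gives
\begin{equation*}
	\|H\|_1=\|\xi\|_2\|\eta\|_2=\frac{|1-\eval^2|}{1-|\eval|^2};
\end{equation*}
since $\dot\varphi(n)=\eval^n\to0$, Theorem~\ref{maintheoreminf} forces $c_\pm=0$, so $\|\tilde\varphi\|_S$ equals this quantity. If $\eval=\pm1$, then $h_{i,j}\equiv0$, so $H=0$ is trivially trace class; here $\dot\varphi(2n)\to1$ while $\dot\varphi(2n+1)\to\eval$, giving $(c_+,c_-)=(1,0)$ or $(0,1)$, hence $\|\tilde\varphi\|_S=|c_+|+|c_-|+\|H\|_1=1$.

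It remains to exclude all other eigenvalues. For $|\eval|\geq1$ with $\eval\neq\pm1$ the factor $1-\eval^2$ is nonzero, so the diagonal entries satisfy $|h_{i,i}|=|1-\eval^2|\,|\eval|^{2i}\geq|1-\eval^2|>0$ for every $i\in\N_0$; since a trace class operator $A$ has $\sum_n|\ip{Ae_n}{e_n}|<\infty$ (as recalled in the proof of Theorem~\ref{maintheorem}), $H$ cannot be of trace class, and by Theorem~\ref{maintheoreminf} $\tilde\varphi$ is not a Schur multiplier. Assembling the three cases yields exactly the set $\setw{\eval\in\C}{|\eval|<1}\cup\seto{\pm1}$ together with the stated norms. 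I do not expect a genuine obstacle here — this is really a corollary of Theorem~\ref{maintheoreminf}; the only step needing a line of justification is the last one, ruling out $|\eval|\geq1$ with $\eval\neq\pm1$, and even there one could instead note that $\dot\varphi(n)=\eval^n$ is unbounded when $|\eval|>1$ while a Schur multiplier must be bounded, handling the remaining circle $|\eval|=1$, $\eval\neq\pm1$ by the diagonal-summability argument above.
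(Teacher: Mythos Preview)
Your proposal is correct and follows essentially the same route as the paper: both apply Theorem~\ref{maintheoreminf} to $\dot\varphi(n)=\eval^n$, recognise $H$ as a rank-one operator, and compute $\|H\|_1=\frac{|1-\eval^2|}{1-|\eval|^2}$ with $c_\pm=0$ when $|\eval|<1$, and $H=0$ with $|c_+|+|c_-|=1$ when $\eval=\pm1$. The only cosmetic difference is in excluding the remaining eigenvalues: the paper uses unboundedness of $\varphi$ for $|\eval|>1$ and non-existence of $\lim_n\dot\varphi(2n)$ for $|\eval|=1$, $\eval\neq\pm1$, whereas you give a unified argument via non-summability of the diagonal of $H$; both are valid and equally short.
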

\begin{proof}
	If $|\eval|>1$ then $\varphi$ is unbounded, and therefore $\tilde\varphi$ is not a Schur multiplier. If $|\eval|=1$ then $\lim_{n\to\infty}\dot\varphi_{it}(2n)$ only exits for $\eval=\pm1$, so these are the only eigenvalues with length one for which $\tilde\varphi$ can be a Schur multiplier. But it is easy to see that if $\varphi$ is the spherical function with eigenvalue $\eval=\pm1$, then $\tilde\varphi$ is a Schur multiplier with Schur norm $1$.
	
	Finally, we are left with the case $|\eval|<1$. Fix $\eval\in\C$ with $|\eval|<1$ and let $\varphi$ be the spherical function with eigenvalue $\eval$. In order to show that $\tilde\varphi$ is a Schur multiplier it is enough, according to Theorem~\ref{maintheoreminf}, to verify that the Hankel matrix $H=(h_{i,j})_{i,j\in\N_0}$ given by $h_{i,j}=\dot\varphi_z(i+j)-\dot\varphi_z(i+j+2)$ is of trace class. To find the corresponding Schur norm we must compute the actual trace class norm of $H$ and also find $c_\pm$ from Theorem~\ref{maintheoreminf}. But since $\lim_{n\to\infty}\dot\varphi(n)=0$ we conclude that $c_\pm=0$, so we are left with just calculating $\|H\|_1$. Start by noting that
	\begin{equation*}
		h_{i,j}=\eval^{i+j}-\eval^{i+j+2}=(1-\eval^2)\eval^i\eval^j\qquad(i,j\in\N_0).
	\end{equation*}
	Now observe that
	\begin{equation*}
		H=\alpha\xi\odot\eta,
	\end{equation*}
	where
	\begin{equation*}
		\alpha=(1-\eval^2)
	\end{equation*}
	and $\xi=(\xi_n)_{n\in\N_0},\eta=(\eta_n)_{n\in\N_0}\in\ell^2(\N_0)$ are given by
	\begin{equation*}
		\xi_n=\eval^n=\bar\eta_n\qquad(n\in\N_0).
	\end{equation*}
	From~\eqref{1x} we conclude that
	\begin{equation*}
		\|H\|_1=|\alpha|\|\xi\|_2\|\eta\|_2=|1-\eval^2|\sum_{n=0}^\infty|\eval^n|^2=\frac{|1-\eval^2|}{1-|\eval|^2}.
	\end{equation*}
\end{proof}

	\section{Integral representations of radial Schur multipliers}
\label{integral}
Let $(\kk_n)_{n=1}^\infty$ be a bounded sequence of complex numbers. Consider the Hankel matrix $H=(h_{i,j})_{i,j\in\N_0}$ given by
\begin{equation*}
	h_{i,j}=\kk_{i+j}\qquad(i,j\in\N_0)
\end{equation*}
and the analytic function $f$ on the open unit disc $\D$ in $\C$ given by
\begin{equation*}
	f(z)=\sum_{n=0}^\infty\kk_n z^n\qquad(z\in\D).
\end{equation*}
By a theorem of Peller~\cite[Theorem~1']{Pel:HankelOperatorsOfClassSpAndTheirApplications(rationalApproximationGaussianProcessesTheProblemOfMajorizationOfOperators)}, $H$ is of trace class if and only if the second derivative $f''$ of $f$ is in $L^1(\D)$, i.e.,
\begin{equation*}
	\|f''\|_1=\frac{1}{\pi}\int_\D|f''(z)|\dd z_1\dd z_2<\infty,
\end{equation*}
where $z_1=\Re(z)$ and $z_2=\Im(z)$. The following theorem is a slight variation of Peller's result, which tells that $H$ is of trace class if and only if the function
\begin{equation*}
	g(z)=\frac{\dd^2}{\dd z^2}(z^2f(z))=\sum_{n=0}^\infty(n+1)(n+2)\kk_n z^n
\end{equation*}
is in $L^1(\D)$. Moreover, we obtain upper and lower estimates for the $L^1(\D)$-norm of $g$ in terms of the trace class norm of $H$.

\begin{theorem}
	\label{int}
	Let $\D=\setw{z\in\C }{ |z|<1 }$ be the unit disc in the complex plane and assume that $(\kk_n)_{n=0}^\infty$ is a bounded sequence in $\C$. Consider the Hankel matrix $H=(h_{i,j})_{i,j\in\N_0}$ with entries $h_{i,j}=\kk_{i+j}$ for $i,j\in\N_0$ and the analytic function $g$ on $\D$ given by
	\begin{equation*}
		g(z)=\sum_{n=0}^\infty(n+2)(n+1)\kk_n z^n\qquad(z\in\D).
	\end{equation*}
	Then the following are equivalent:
	\begin{itemize}
		\item [(i)]$H$ is of trace class.
		\item [(ii)]$g\in L^1(\D)$.
	\end{itemize}
	If these two equivalent conditions are satisfied, then
	\begin{equation}
		\label{hic}
		h_{i,j}=\frac{1}{\pi}\int_\D g(\bar z)z^{i+j}(1-|z|^2)\dd z_1\dd z_2\qquad(i,j\in\N_0),
	\end{equation}
	where $z_1=\Re(z)$ and $z_2=\Im(z)$, and
	\begin{equation*}
		\|H\|_1\leq\|g\|_1\leq\frac{8}{\pi}\|H\|_1.
	\end{equation*}
\end{theorem}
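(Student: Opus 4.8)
The plan is to prove the four assertions in order: the equivalence (i)$\Leftrightarrow$(ii), the representation~\eqref{hic}, the easy estimate $\|H\|_1\le\|g\|_1$, and the harder estimate $\|g\|_1\le\tfrac8\pi\|H\|_1$. The computational heart is the elementary moment identity, obtained in polar coordinates,
\begin{equation*}
	\frac1\pi\int_\D z^m\bar z^n(1-|z|^2)\,dz_1\,dz_2=\frac{\delta_{m,n}}{(m+1)(m+2)}\qquad(m,n\in\N_0).
\end{equation*}
Writing $k_z=(z^i)_{i\in\N_0}\in\ell^2(\N_0)$ (so $\|k_z\|_2^2=(1-|z|^2)^{-1}$) and letting $k_z\odot\overline{k_z}$ be the rank one operator of~\eqref{1x} whose $(i,j)$ entry is $z^{i+j}$, the moment identity gives $\frac1\pi\int_\D g(\bar z)z^{i+j}(1-|z|^2)\,dz_1\,dz_2=\kk_{i+j}=h_{i,j}$. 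To legitimise interchanging the series defining $g$ with the integral, one first works with the dilate $g_\rho(z)=g(\rho z)$, $0<\rho<1$, whose Taylor series is uniformly convergent on $\overline\D$, and then lets $\rho\to1$; this uses that $g\in L^1(\D)$ forces $g_\rho\to g$ in $L^1(\D)$ (from $g_\rho\to g$ pointwise together with $\|g_\rho\|_1\to\|g\|_1$, via a Fatou argument) and that $|z^{i+j}(1-|z|^2)|\le1$. Thus~\eqref{hic} holds whenever $g\in L^1(\D)$, hence (once the equivalence is proved) whenever (i) or (ii) holds.

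For (ii)$\Rightarrow$(i) and the first norm bound: if $g\in L^1(\D)$ then, since $\|g(\bar z)(1-|z|^2)\,(k_z\odot\overline{k_z})\|_1=|g(\bar z)|$, the Bochner integral $\frac1\pi\int_\D g(\bar z)(1-|z|^2)\,(k_z\odot\overline{k_z})\,dz_1\,dz_2$ converges absolutely in $\traceclass(\ell^2(\N_0))$; evaluating the continuous functionals $A\mapsto\ip{Ae_j}{e_i}$ and using~\eqref{hic} shows this integral represents $H$. Hence $H$ is of trace class and $\|H\|_1\le\frac1\pi\int_\D|g(\bar z)|\,dz_1\,dz_2=\|g\|_1$.

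For (i)$\Rightarrow$(ii) I would invoke Peller's theorem~\cite{Pel:HankelOperatorsOfClassSpAndTheirApplications(rationalApproximationGaussianProcessesTheProblemOfMajorizationOfOperators)}, which asserts that $H$ is of trace class if and only if $f''\in L^1(\D)$, where $f(z)=\sum_{n=0}^\infty\kk_nz^n$, and then reduce to showing $f''\in L^1(\D)\Leftrightarrow g\in L^1(\D)$. Since $g=\tfrac{d^2}{dz^2}(z^2f)=2f+4zf'+z^2f''$, this follows from two elementary facts: (a) a holomorphic function with bounded Taylor coefficients lies in $L^1(\D)$ (indeed $\frac1\pi\int_\D|f|\le2\sup_n|\kk_n|$, using $M_2(r,f)\le(\sup_n|\kk_n|)(1-r^2)^{-1/2}$ and integrating in $r$), so $f\in L^1(\D)$ automatically; and (b) the primitive $G$ of a holomorphic $F\in L^1(\D)$ satisfies $M_1(\rho,G)\le\int_0^1M_1(s,F)\,ds<\infty$, so $G\in H^1\subset L^1(\D)$. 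Putting $w=zf'$ one has $z^2f''=zw'-w$ and $4zf'+z^2f''=\tfrac1{z^2}\tfrac{d}{dz}(z^3w)$, and applying (b) to $z^2(4zf'+z^2f'')$ shows that the conditions $g\in L^1(\D)$, $zf'\in L^1(\D)$ and $f''\in L^1(\D)$ are all equivalent; together with Peller's theorem this yields (i)$\Leftrightarrow$(ii).

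The estimate $\|g\|_1\le\tfrac8\pi\|H\|_1$ is the main obstacle. A natural route: conjugating $H$ by the diagonal contraction $\operatorname{diag}(\rho^i)$ and letting $\rho\to1$ reduces to the case of rapidly decaying $(\kk_n)$, where all series converge; then $\|g\|_1=\frac1\pi\int_\D|g|$ is computed by duality against $\phi\in L^\infty(\D)$ with $\|\phi\|_\infty\le1$, and one must produce, for each such $\phi$, a bounded operator $B_\phi$ on $\ell^2(\N_0)$ with $\trace(HB_\phi)=\frac1\pi\int_\D g\,\phi$ and $\|B_\phi\|\le\tfrac8\pi$. Since $\trace(HB)=\frac1\pi\int_\D g(w)(1-|w|^2)\,\ip{Bk_{\bar w}}{\overline{k_{\bar w}}}\,dw_1\,dw_2$, this amounts to showing that every holomorphic $\beta$ on $\D$ with $\sup_z(1-|z|^2)|\beta(z)|\le1$ is realised, with norm control by $\tfrac8\pi$, as the "anti-diagonal sum" $z\mapsto\sum_{i,j}(B)_{i,j}z^{i+j}=\ip{Bk_z}{\overline{k_z}}$ of a bounded operator $B$; building $B$ as a Bergman-type integral of rank-two operators and bounding its norm by an almost-orthogonality (Schur-test) argument — out of which the constant $\tfrac8\pi$ falls — is the delicate step. (Alternatively the same bound can be extracted from a quantitative version of Peller's estimate combined with the identities in the previous paragraph.) Once this is in place, the stated norm formula and the representation~\eqref{hic} are assembled as above.
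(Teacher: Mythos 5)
Your first half is fine and matches the paper: the moment identity in polar coordinates, the representation~\eqref{hic} (your dilation argument $g_\rho\to g$ in $L^1(\D)$ is a legitimate way to justify the interchange), and the bound $\|H\|_1\le\|g\|_1$ via the fact that $(z^{i+j})_{i,j}$ has trace norm $(1-|z|^2)^{-1}$. Your route to (i)$\implies$(ii) through Peller's theorem plus the comparison of $g=2f+4zf'+z^2f''$ with $f''$ (bounded coefficients give $f\in L^1(\D)$; primitives of Bergman-$L^1$ functions lie in $H^1\subset L^1(\D)$) is workable as a qualitative statement, though it is a different route from the paper, which never invokes Peller.

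The genuine gap is the estimate $\|g\|_1\le\tfrac{8}{\pi}\|H\|_1$, which you yourself call ``the delicate step'' and do not prove. The duality scheme is only a plan: for a given $\phi\in L^\infty(\D)$, matching moments forces $\beta(z)=\sum_n(n+1)(n+2)\big(\tfrac1\pi\int_\D w^n\phi\,dw_1dw_2\big)z^n$, i.e.\ $\beta$ is a weighted Bergman projection of $\phi$ with kernel $2(1-z\bar w)^{-3}$, and already the passage to the class $\sup_z(1-|z|^2)|\beta(z)|\le1$ costs a projection constant you have not tracked; the remaining claim, that every such $\beta$ is the anti-diagonal generating function $\sum_{i,j}b_{j,i}z^{i+j}$ of an operator of norm at most $\tfrac8\pi$, is essentially the dual form of the inequality to be proved, and asserting that an almost-orthogonality/Schur-test construction produces exactly this constant is not a proof. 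Nor can you cite ``a quantitative version of Peller'' with the constant $\tfrac8\pi$. Since this constant is precisely what Theorem~\ref{integralthm} and Remark~\ref{2.3.3} consume, it must be established. The paper does it primally and in a few lines: write $H=\sum_k\xi^{(k)}\odot\eta^{(k)}$ with $\|H\|_1=\sum_k\|\xi^{(k)}\|_2\|\eta^{(k)}\|_2$, so $\kk_{i+j}=\sum_k\xi_i^{(k)}\bar\eta_j^{(k)}$; let $\gamma_n$ be the Taylor coefficients of $(1-z)^{-3/2}$, so that $\sum_{i=0}^n\gamma_i\gamma_{n-i}=\tfrac{(n+1)(n+2)}2$ and $\gamma_n\le\tfrac2{\sqrt\pi}\sqrt{n+1}$ by log-convexity of $\Gamma$; then $\varphi_k(z)=\sum_n\gamma_n\xi_n^{(k)}z^n$ and $\psi_k(z)=\sum_n\gamma_n\bar\eta_n^{(k)}z^n$ lie in $L^2(\D)$ with $\|\varphi_k\|_2\le\tfrac2{\sqrt\pi}\|\xi^{(k)}\|_2$, $\|\psi_k\|_2\le\tfrac2{\sqrt\pi}\|\eta^{(k)}\|_2$ (using $\|z^n\|_2^2=\tfrac1{n+1}$), and $\tfrac12g=\sum_k\varphi_k\psi_k$, whence $\|g\|_1\le\tfrac8\pi\|H\|_1$. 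Note that this argument simultaneously proves (i)$\implies$(ii), so your entire Peller-based paragraph can be dispensed with.
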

\begin{proof}
	Assume first that $g\in L^1(\D)$. Using polar coordinates in $\D$ one finds that
	\begin{equation*}
		\frac{1}{\pi}\int_\D g(\bar z)z^{n}\dd z_1\dd z_2=\frac{(n+2)(n+1)}{n+1}\kk_n\qquad(n\in\N_0)
	\end{equation*}
	and
	\begin{equation*}
		\frac{1}{\pi}\int_\D g(\bar z)z^{n+1}\bar z\dd z_1\dd z_2=\frac{(n+2)(n+1)}{n+2}\kk_n\qquad(n\in\N_0),
	\end{equation*}
	from which~\eqref{hic} easily follows. Since the matrix whose $i,j$ th entries are given by $z^{i+j}$ for some $z\in\D$ has trace class norm given by $\frac{1}{1-|z|^2}$ (cf.~the proof of Theorem~\ref{sphSchurNorminf}) it follows from~\eqref{hic} that $H$ is of trace class and that
	\begin{equation*}
		\|H\|_1\leq\|g\|_1.
	\end{equation*}
	
	We now turn to the other implication of the theorem, and assume that $H$ is of trace class. By~\eqref{1*}--\eqref{1***} we can find sequences $(\xi^{(k)})_{k\in\N_0},(\eta^{(k)})_{k\in\N_0}$ in $\ell^2(\N_0)$ such that
	\begin{equation*}
		H=\sum_{k=0}^\infty\xi^{(k)}\odot\eta^{(k)}
	\end{equation*}
	and
	\begin{equation*}
		\|H\|_1=\sum_{k=0}^\infty\|\xi^{(k)}\|_2\|\eta^{(k)}\|_2,
	\end{equation*}
	from which it follows that
	\begin{equation}
		\label{Haga}
		\kk_n=\sum_{k=0}^\infty\xi_i^{(k)}\bar\eta_{n-i}^{(k)}\qquad(n\in\N_0),
	\end{equation}
	for any $i\in\{0,\cdots,n\}$.
	
	The following two Taylor expansions are easily verified:
	\begin{equation}
		\label{Taylor1}
		(1-z)^{-3}=\sum_{n=0}^\infty\frac{(n+1)(n+2)}{2}z^n\qquad(z\in\D)
	\end{equation}
	and
	\begin{equation}
		\label{Taylor2}
		(1-z)^{-3/2}=\sum_{n=0}^\infty\gamma_n z^n\qquad(z\in\D),
	\end{equation}
	where
	\begin{equation*}
		\gamma_n=\frac{\Gamma(n+\frac{3}{2})}{\Gamma(\frac{3}{2})\Gamma(n+1)}>0.
	\end{equation*}
	Using that the Gamma function is logarithmically convex on $\R^+$ (cf.~\cite[Chapter~5~(31)]{Ahl:ComplexAnalysis}) and therefore satisfy
	\begin{equation*}
		\Gamma(z+\tfrac{1}{2})\leq\Gamma(z)^{\frac{1}{2}}\Gamma(z+1)^{\frac{1}{2}}\qquad(z>0),
	\end{equation*}
	we have that
	\begin{equation*}
		\gamma_n\leq\frac{\Gamma(n+1)^{\frac{1}{2}}\Gamma(n+2)^{\frac{1}{2}}}{\Gamma(\frac{3}{2})\Gamma(n+1)}=\frac{2}{\sqrt\pi}\sqrt{n+1}\qquad(n\in\N_0).
	\end{equation*}
	Using $\big((1-z)^{-\tfrac{3}{2}}\big)^2=(1-z)^{-3}$ for $z\in\D$ together with~\eqref{Taylor1} and~\eqref{Taylor2} one finds that
	\begin{equation*}
		\sum_{i=0}^n\gamma_i\gamma_{n-i}=\frac{(n+1)(n+2)}{2}\qquad(n\in\N_0).
	\end{equation*}
	It is easy to check that the functions $(u_n)_{n\in\N_0}$ given by
	\begin{equation*}
		u_n(z)=z^n\qquad(z\in\D,\,n\in\N_0)
	\end{equation*}
	form an orthogonal sequence in $L^2(\D)$ with respect to the inner product
	\begin{equation*}
		\ip{\varphi}{\psi}=\frac{1}{\pi}\int_\D\varphi(z)\overline{\psi(z)}\dd z_1\dd z_2\qquad(\varphi,\psi\in L^2(\D)).
	\end{equation*}
	Moreover,
	\begin{equation*}
		\|u_n\|_2^2=\ip{u_n}{u_n}=\frac{1}{n+1}\qquad(n\in\N_0).
	\end{equation*}
	For $k\in\N_0$ put
	\begin{equation*}
		\varphi_k(z)=\sum_{n=0}^\infty\gamma_n\xi_n^{(k)}z^n\quad\mbox{and}\quad\psi_k(z)=\sum_{n=0}^\infty\gamma_n\bar\eta_n^{(k)}z^n\qquad(z\in\D).
	\end{equation*}
	Since
	\begin{equation*}
		\gamma_n^2\leq\frac{4}{\pi}(n+1)\qquad(n\in\N_0)
	\end{equation*}
	it follows that $\varphi_k,\psi_k\in L^2(\D)$. Moreover,
	\begin{equation*}
		\|\varphi_k\|_2^2\leq\frac{4}{\pi}\|\xi^{(k)}\|_2^2\quad\mbox{and}\quad\|\psi_k\|_2^2\leq\frac{4}{\pi}\|\eta^{(k)}\|_2^2
	\end{equation*}
	for all $k\in\N_0$. Hence, $\sum_{k=0}^\infty\varphi_k\psi_k\in L^1(\D)$ and
	\begin{equation*}
		\|\sum_{k=0}^\infty\varphi_k\psi_k\|_1\leq\frac{4}{\pi}\sum_{k=0}^\infty\|\xi^{(k)}\|_2\|\eta^{(k)}\|_2=\frac{4}{\pi}\|H\|_1.
	\end{equation*}
	For $z\in\D$,
	\begin{eqnarray*}
		\sum_{k=0}^\infty\varphi_k(z)\psi_k(z) & = & \sum_{i,j=0}^\infty\gamma_i\gamma_j\big(\sum_{k=0}^\infty\xi_i^{(k)}\bar\eta_j^{(k)}\big)z^{i+j}\\
		& = & \sum_{i,j=0}^\infty\gamma_i\gamma_j \kk_{i+j}z^{i+j}\\
		& = & \sum_{n=0}^\infty\big(\sum_{i=0}^\infty\gamma_i\gamma_{n-i}\big)\kk_n z^n\\
		& = & \frac{1}{2}\sum_{n=0}^\infty(n+1)(n+2)\kk_n z^n\\
		& = & \frac{1}{2}g(z).
	\end{eqnarray*}
	Hence $g\in L^1(\D)$ and
	\begin{equation*}
		\|g\|_1=2\|\sum_{k=0}^\infty\varphi_k\psi_k\|_1\leq\frac{8}{\pi}\|H\|_1.
	\end{equation*}
\end{proof}
\begin{theorem}
	\label{integralthm}
	Let $(\X,x_0)$ be a homogeneous tree of infinite degree with a distinguished vertex $x_0\in\X$. Let $\varphi:\X\to\C$ be a radial function and let $\dot\varphi:\N_0\to\C$ and $\tilde\varphi:\X\times\X\to\C$ be the corresponding functions as in Proposition~\ref{radial}. Then $\tilde\varphi$ is a Schur multiplier if and only if there exists constants $c_\pm\in\C$ and a complex Borel measure $\mu$ on $\D=\setw{z\in\C }{ |z|<1 }$ such that
	\begin{equation}
		\label{intform}
		\dot\varphi(n)=c_++c_-(-1)^n+\int_\D z^n\dd\mu(z)\qquad(n\in\N_0)
	\end{equation}
	and
	\begin{equation*}
		\int_\D \frac{|1-z^2|}{1-|z|^2}\dd|\mu|(z)<\infty.
	\end{equation*}
	Moreover,
	\begin{equation*}
		\|\tilde\varphi\|_S\leq|c_+|+|c_-|+\int_\D \frac{|1-z^2|}{1-|z|^2}\dd|\mu|(z)
	\end{equation*}
	and it is possible to choose $\mu$ such that
	\begin{equation}
		\label{2.19}
		|c_+|+|c_-|+\int_\D \frac{|1-z^2|}{1-|z|^2}\dd|\mu|(z)\leq\frac{8}{\pi}\|\tilde\varphi\|_S.
	\end{equation}
\end{theorem}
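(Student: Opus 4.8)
The plan is to obtain Theorem~\ref{integralthm} by coupling Theorem~\ref{maintheoreminf} — the trace-class criterion and exact Schur norm for radial multipliers on the tree of infinite degree — with the Peller-type Theorem~\ref{int}. The bridge is that the Hankel matrix attached to $\dot\varphi$ has entries $h_{i,j}=\kk_{i+j}$ with $\kk_n=\dot\varphi(n)-\dot\varphi(n+2)$, which is precisely the form to which Theorem~\ref{int} applies. The one elementary fact I will use repeatedly is $1-|z|^2\le|1-z^2|$ for $z\in\D$, equivalently $\big|\tfrac{1-|z|^2}{1-z^2}\big|\le1$ on $\D$ (and $1-z^2$ is nonvanishing there); this is the same inequality behind $\|\tilde\varphi_z\|_S=\tfrac{|1-z^2|}{1-|z|^2}\ge1$ in Theorem~\ref{sphSchurNorminf}.

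For necessity, suppose $\tilde\varphi$ is a Schur multiplier. Theorem~\ref{maintheoreminf} gives $\dot\varphi(n)=c_++c_-(-1)^n+\dot\psi(n)$ with $\dot\psi(n)\to0$, $\|\tilde\varphi\|_S=|c_+|+|c_-|+\|H\|_1$, and $\kk_n=\dot\psi(n)-\dot\psi(n+2)$; since $\dot\varphi$ is bounded, $(\kk_n)$ is bounded, so Theorem~\ref{int} supplies $g\in L^1(\D)$ with $\kk_n=\tfrac1\pi\int_\D g(\bar z)z^n(1-|z|^2)\,\dd z_1\,\dd z_2$ and $\|H\|_1\le\|g\|_1\le\tfrac8\pi\|H\|_1$. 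I would then take
\begin{equation*}
    \dd\mu(z)=\frac1\pi\,g(\bar z)\,\frac{1-|z|^2}{1-z^2}\,\dd z_1\,\dd z_2 ,
\end{equation*}
which by the inequality above is a \emph{finite} complex Borel measure on $\D$, with $\int_\D\tfrac{|1-z^2|}{1-|z|^2}\,\dd|\mu|=\|g\|_1$ after the area-preserving change $z\mapsto\bar z$. Expanding $\tfrac1{1-z^2}=\sum_{j\ge0}z^{2j}$ and integrating termwise (legitimate since a short estimate with $|g|$ bounds $\sum_{j\ge0}|\kk_{n+2j}|$ by $\|g\|_1$), then telescoping via $\dot\psi\to0$, gives $\int_\D z^n\,\dd\mu=\sum_{j\ge0}\kk_{n+2j}=\dot\psi(n)$; hence $\dot\varphi(n)=c_++c_-(-1)^n+\int_\D z^n\,\dd\mu(z)$. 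Moreover $|c_+|+|c_-|+\int_\D\tfrac{|1-z^2|}{1-|z|^2}\,\dd|\mu|=|c_+|+|c_-|+\|g\|_1\le\tfrac8\pi\big(|c_+|+|c_-|+\|H\|_1\big)=\tfrac8\pi\|\tilde\varphi\|_S$, which is~\eqref{2.19}.

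For sufficiency and the general bound $\|\tilde\varphi\|_S\le|c_+|+|c_-|+\int_\D\tfrac{|1-z^2|}{1-|z|^2}\,\dd|\mu|$, assume such $c_\pm,\mu$ are given. The constant terms cancel under the second difference, so the Hankel matrix of $\dot\varphi$ has $h_{i,j}=\int_\D(1-z^2)z^{i+j}\,\dd\mu(z)$. For $|z|<1$ put $\xi_z=(z^i)_{i\in\N_0}$ and $\eta_z=(\bar z^j)_{j\in\N_0}$ in $\ell^2(\N_0)$; then $(z^{i+j})_{i,j}=\xi_z\odot\eta_z$ and, by~\eqref{1x}, $\|(1-z^2)\,\xi_z\odot\eta_z\|_1=\tfrac{|1-z^2|}{1-|z|^2}$. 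The map $z\mapsto(1-z^2)\,\xi_z\odot\eta_z$ is continuous from $\D$ into $\traceclass(\ell^2(\N_0))$ with integrable trace norm, so the Bochner integral $H=\int_\D(1-z^2)\,\xi_z\odot\eta_z\,\dd\mu(z)$ exists in $\traceclass(\ell^2(\N_0))$; since entrywise evaluation is a bounded functional on $\traceclass(\ell^2(\N_0))$ it recovers the $h_{i,j}$ above, so $H$ is of trace class with $\|H\|_1\le\int_\D\tfrac{|1-z^2|}{1-|z|^2}\,\dd|\mu|$, and Theorem~\ref{maintheoreminf} yields that $\tilde\varphi$ is a Schur multiplier. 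Because $\mu$ is finite and carried by the open disc, dominated convergence gives $\lim_n\dot\varphi(2n)=c_++c_-$ and $\lim_n\dot\varphi(2n+1)=c_+-c_-$, so the constants produced by Theorem~\ref{maintheoreminf} coincide with the given $c_\pm$, whence $\|\tilde\varphi\|_S=|c_+|+|c_-|+\|H\|_1\le|c_+|+|c_-|+\int_\D\tfrac{|1-z^2|}{1-|z|^2}\,\dd|\mu|(z)$.

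I expect the delicate point to be the necessity direction: once Theorem~\ref{int} is in hand, one must check that the candidate density $g(\bar z)\tfrac{1-|z|^2}{1-z^2}$ really defines a \emph{finite} measure and genuinely reproduces $\dot\psi$ — both of which come down to $1-|z|^2\le|1-z^2|$ together with a routine interchange of summation and integration — and that the normalisation in Theorem~\ref{int} (there $g$ is $\tfrac{\dd^2}{\dd z^2}(z^2f(z))$, not $f''$) is exactly the one matched to the second-difference Hankel matrix of Theorem~\ref{maintheoreminf}.
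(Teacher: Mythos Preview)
Your proposal is correct and follows essentially the same route as the paper: both directions are obtained by combining Theorem~\ref{maintheoreminf} with Theorem~\ref{int}, the measure is defined by exactly the same density $\tfrac{1}{\pi}g(\bar z)\tfrac{1-|z|^2}{1-z^2}$, and the trace-class bound on $H$ in the sufficiency direction uses the same rank-one computation $\|(z^{i+j})\|_1=(1-|z|^2)^{-1}$. The only cosmetic differences are that you justify $\int_\D z^n\,\dd\mu=\dot\psi(n)$ by expanding $(1-z^2)^{-1}$ as a geometric series and summing termwise, whereas the paper telescopes the finite sums $\dot\psi(n)-\dot\psi(n+2k)$ and passes to the limit by dominated convergence, and you make explicit (via $\int_\D z^n\,\dd\mu\to0$) that the constants $c_\pm$ produced by Theorem~\ref{maintheoreminf} agree with the given ones---a point the paper leaves implicit.
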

\begin{proof}
	If $\varphi$ has the form~\eqref{intform}, then the Hankel matrix $H=(h_{i,j})_{i,j\in\N_0}$ from Theorem~\ref{maintheoreminf} is given by
	\begin{equation*}
		h_{i,j}=\dot\varphi(i+j)-\dot\varphi(i+j+2)=\int_\D z^{i+j}(1-z^2)\dd\mu(z)\qquad(i,j\in\N_0),
	\end{equation*}
	from which it follows that
	\begin{equation*}
		\|H\|_1\leq\int_\D\frac{|1-z^2|}{1-|z|^2}\dd|\mu|(z),
	\end{equation*}
	where we again used that the matrix whose $i,j$ th entries are given by $z^{i+j}$ for some $z\in\D$ has trace class norm given by $\frac{1}{1-|z|^2}$. By assumption this is finite, so it follows from Theorem~\ref{maintheoreminf} that $\tilde\varphi$ is a Schur multiplier with
	\begin{equation*}
		\|\tilde\varphi\|_S\leq|c_+|+|c_-|+\int_\D \frac{|1-z^2|}{1-|z|^2}\dd|\mu|(z).
	\end{equation*}
	
	To prove the remaining part of the theorem, assume that $\tilde\varphi$ is a Schur multiplier and let $c_\pm,\psi$ and $H$ be defined as in Theorem~\ref{maintheoreminf}. Then
	\begin{equation*}
		\dot\varphi(n)=c_++c_-(-1)^n+\dot\psi(n)\qquad(n\in\N_0),
	\end{equation*}
	where
	\begin{equation*}
		\lim_{n\to\infty}\dot\psi(n)=0.
	\end{equation*}
	Moreover, $H=(h_{i,j})_{i,j\in\N_0}$ is a Hankel matrix of trace class with entries
	\begin{equation*}
		h_{i,j}=\dot\varphi(i+j)-\dot\varphi(i+j+2)=\dot\psi(i+j)-\dot\psi(i+j+2)\qquad(i,j\in\N_0)
	\end{equation*}
	and
	\begin{equation}
		\label{2.20}
		\|\tilde\varphi\|_S=|c_+|+|c_-|+\|H\|_1.
	\end{equation}
	By Theorem~\ref{int}, there exists a function $g\in L^1(\D)$ such that $\|g\|_1\leq\frac{8}{\pi}\|H\|_1$ and
	\begin{equation*}
		\dot\psi(n)-\dot\psi(n+2)=\frac{1}{\pi}\int_\D z^n(1-|z|^2)g(\bar z)\dd z_1\dd z_2\qquad(n\in\N_0).
	\end{equation*}
	Hence, for $n\in\N_0$ and $k\in\N$,
	\begin{eqnarray*}
		\dot\psi(n)-\dot\psi(n+2k) & = & \frac{1}{\pi}\sum_{j=0}^{k-1}\int_\D z^{n+2j}(1-|z|^2)g(\bar z)\dd z_1\dd z_2\\
		& = & \frac{1}{\pi}\int_\D\frac{z^n(1-z^{2k})}{1-z^2}(1-|z|^2)g(\bar z)\dd z_1\dd z_2.
	\end{eqnarray*}
	In the limit $k$ going to infinity we get, by Lebesgue's dominated convergence theorem, that
	\begin{equation*}
		\dot\psi(n)=\frac{1}{\pi}\int_\D z^n\frac{1-|z|^2}{1-z^2}g(\bar z)\dd z_1\dd z_2\qquad(n\in\N_0).
	\end{equation*}
	Therefore, \eqref{intform} holds with respect to the complex measure
	\begin{equation*}
		\dd\mu(z)=\frac{1}{\pi}\frac{1-|z|^2}{1-z^2}g(\bar z)\dd z_1\dd z_2.
	\end{equation*}
	Moreover,
	\begin{equation*}
		\int_\D\frac{|1-z^2|}{1-|z|^2}\dd|\mu|(z)=\|g\|_1\leq\frac{8}{\pi}\|H\|_1.
	\end{equation*}
	Hence, by~\eqref{2.20}
	\begin{equation*}
		|c_+|+|c_-|+\int_\D\frac{|1-z^2|}{1-|z|^2}\dd|\mu|(z)\leq\frac{8}{\pi}\|\tilde\varphi\|_S.
	\end{equation*}
\end{proof}
\begin{remark}
	\label{2.3.3}
	Theorem~\ref{integralthm} also holds for homogeneous tress $(\X,x_0)$ of finite degree $q+1$ ($2\leq q<\infty$) if one replaces the right hand side of~\eqref{2.19} by $\frac{8}{\pi}\frac{q+1}{q-1}\|\tilde\varphi\|_S$. This is an immediate consequence of Corollary~\ref{sameDifference}.
\end{remark}

	\section{Applications to free groups}
\label{fn}
Throughout this section $\Gamma$ denotes a group of the form
\begin{equation}
	\label{freeconvolution}
	\Gamma=\FF,
\end{equation}
where $M,N\in\N_0\bigcup\{\infty\}$ and $q=M+2N-1\geq 2$. In particular, this includes the groups
\begin{equation*}
	\ast_{m=1}^M\Z/2\Z\qquad(3\leq M\leq\infty)
\end{equation*}
and the (non-abelian) free groups
\begin{equation*}
	\F_N=\ast_{n=1}^N\Z\qquad(2\leq N\leq\infty).
\end{equation*}
By~\cite[page~16--18]{FTN:HarmonicAnalysisAndRepresentationTheoryForGroupsActingOnHhomogeneousTrees} the Cayley graph of $\Gamma$ is a homogeneous tree of degree $q+1$. There is a canonical distinguished vertex $x_0$ in $\Gamma$, namely the identity element $e$. The results of section~\ref{homtree} and~\ref{sphfct} can all be reformulated as results about radial or spherical functions on $(\Gamma,e)$, but the concept of a Schur multiplier is perhaps more naturally replaced by the concept of a completely bounded Fourier multiplier (written $\MoA(\Gamma)$).
\begin{proposition}
	\label{equalnormsF}
	Consider a group $\Gamma$ of the form~\eqref{freeconvolution} with $2\leq q\leq\infty$. Let $\varphi$ be a radial function on $\Gamma$, then $\varphi$ is a completely bounded Fourier multiplier of $\Gamma$ if and only if the corresponding function $\tilde\varphi:\Gamma\times\Gamma\to\C$ given by Proposition~\ref{radial} is a Schur multiplier. Moreover,
	\begin{equation*}
		\|\varphi\|_{\MoA(\Gamma)}=\|\tilde\varphi\|_S.
	\end{equation*}
\end{proposition}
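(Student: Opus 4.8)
The plan is to reduce the statement to Proposition~\ref{Gilbert0} together with the definition of the Herz--Schur norm recalled in the introduction; the only point of substance is that radiality of $\varphi$ makes the function $\hat\varphi$ of~\eqref{new0.3} coincide with $\tilde\varphi$.

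First I would recall that, with the generating set of $\Gamma$ coming from the free product decomposition~\eqref{freeconvolution}, the Cayley graph of $\Gamma$ is a homogeneous tree of degree $q+1$ whose vertex set we identify with $\Gamma$, with distinguished vertex $x_0=e$ (cf.~\cite[page~16--18]{FTN:HarmonicAnalysisAndRepresentationTheoryForGroupsActingOnHhomogeneousTrees}). The graph metric $\dd$ on $\Gamma$ is left-invariant, and $\dd(e,g)$ is the word length of $g$. Hence, for $x,y\in\Gamma$,
\begin{equation*}
	\dd(x,y)=\dd(e,y^{-1}x),
\end{equation*}
and since $\varphi$ is radial, $\varphi(g)=\dot\varphi(\dd(e,g))$ for all $g\in\Gamma$. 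Combining these gives
\begin{equation*}
	\hat\varphi(x,y)=\varphi(y^{-1}x)=\dot\varphi(\dd(e,y^{-1}x))=\dot\varphi(\dd(x,y))=\tilde\varphi(x,y)\qquad(x,y\in\Gamma),
\end{equation*}
so $\hat\varphi$ and $\tilde\varphi$ are literally the same function on $\Gamma\times\Gamma$.

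Next I would apply Proposition~\ref{Gilbert0}. Since $\Gamma$ is discrete, every function on it is continuous, so the continuity hypotheses there are automatic; the equivalence of (i) and (ii) then states exactly that $\varphi\in\MoA(\Gamma)$ if and only if $\varphi$ is a Herz--Schur multiplier on $\Gamma$, and by the definition of Herz--Schur multiplier this holds if and only if $\hat\varphi$ is a Schur multiplier on $\Gamma\times\Gamma$, with $\|\varphi\|_{\MoA(\Gamma)}=\|\varphi\|_{HS}=\|\hat\varphi\|_S$. Substituting the identity $\hat\varphi=\tilde\varphi$ established above yields both the claimed equivalence and the norm equality $\|\varphi\|_{\MoA(\Gamma)}=\|\tilde\varphi\|_S$. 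I do not anticipate any real obstacle: all the work is done by Proposition~\ref{Gilbert0}, and the only thing to verify by hand is the elementary identification of the Cayley-tree metric on $\Gamma$ with the word-length metric, which is standard.
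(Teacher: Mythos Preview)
Your proof is correct and follows essentially the same route as the paper: you identify $\hat\varphi$ with $\tilde\varphi$ via the left-invariance of the Cayley-graph metric and then invoke Proposition~\ref{Gilbert0}. The paper's proof is slightly terser but makes exactly the same computation and appeals to the same result.
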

\begin{proof}
	By left invariance of the metric $\dd$ on $\Gamma$, we have
	\begin{equation*}
		\tilde\varphi(x,y)=\dot\varphi(\dd(x,y))=\dot\varphi(\dd(y^{-1}x,e))=\varphi(y^{-1}x)=\hat\varphi(x,y)\qquad(x,y\in\Gamma),
	\end{equation*}
	where $\hat\varphi:\Gamma\times\Gamma\to\C$ is given by~\eqref{new0.3}. Hence, Proposition~\ref{equalnormsF} follows from Proposition~\ref{Gilbert0} and the equalities
	\begin{equation*}
		\|\varphi\|_{\MoA(\Gamma)}=\|\varphi\|_{HS}=\|\hat\varphi\|_{S}.
	\end{equation*}
\end{proof}
Since the spherical functions on $\Gamma$ are simply the spherical functions on the homogeneous tree $(\Gamma,e)$, where we have identified (the vertices of) the Cayley graph with $\Gamma$, we can use Proposition~\ref{equalnormsF} to reformulate the main results from section~\ref{homtree}, \ref{sphfct} and~\ref{integral} (i.e., Theorem~\ref{maintheorem}, \ref{maintheoreminf}, \ref{sphSchurNormEigenvalue}, \ref{sphSchurNorminf} and \ref{integralthm} and Remark~\ref{2.3.3}).
\begin{theorem}
	\label{maintheoremgamma}
	Consider a group $\Gamma$ of the form~\eqref{freeconvolution} with $2\leq q\leq\infty$. Let $\varphi:\Gamma\to\C$ be a radial function and let $\dot\varphi:\N_0\to\C$ be the corresponding function as in Proposition~\ref{radial}. Finally, let $H=(h_{i,j})_{i,j\in\N_0}$ be the Hankel matrix given by $h_{i,j}=\dot\varphi(i+j)-\dot\varphi(i+j+2)$ for $i,j\in\N_0$. Then the following are equivalent:
	\begin{itemize}
		\item [(i)]$\varphi$ is a completely bounded Fourier multiplier of $\Gamma$.
		\item [(ii)]$H$ is of trace class.
	\end{itemize}
	If these two equivalent conditions are satisfied, then there exists unique constants $c_\pm\in\C$ and a unique $\dot\psi:\N_0\to\C$ such that
	\begin{equation*}
		\dot\varphi(n)=c_++c_-(-1)^n+\dot\psi(n)\qquad(n\in\N_0)
	\end{equation*}
	and
	\begin{equation*}
		\lim_{n\to\infty}\dot\psi(n)=0.
	\end{equation*}
	Moreover,
	\begin{equation*}
		\|\varphi\|_{\MoA(\Gamma)}=|c_+|+|c_-|+\left\{
		\begin{array}{lll}
			\|H\|_1 & \mbox{if} & q=\infty\\
			\big(1-\tfrac{1}{q}\big)\|\big(I-\tfrac{\tau}{q}\big)^{-1}H\|_1 & \mbox{if} & q<\infty,
		\end{array}
		\right.
	\end{equation*}
	where $\tau$ is the shift operator defined by~\eqref{tau}.
\end{theorem}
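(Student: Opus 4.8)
The plan is to deduce Theorem~\ref{maintheoremgamma} as an immediate consequence of the corresponding results for radial Schur multipliers on homogeneous trees, using the group structure only to pass from Schur multipliers to completely bounded Fourier multipliers. First I would recall that by~\cite[page~16--18]{FTN:HarmonicAnalysisAndRepresentationTheoryForGroupsActingOnHhomogeneousTrees} the Cayley graph of a group $\Gamma$ of the form~\eqref{freeconvolution} with $q = M+2N-1$ is a homogeneous tree of degree $q+1$, and after identifying the vertices of this Cayley graph with the elements of $\Gamma$ the identity $e$ becomes a distinguished vertex $x_0$. Thus $(\Gamma,e)$ is precisely a pointed homogeneous tree of the kind treated in Sections~\ref{homtree}, and a radial function $\varphi:\Gamma\to\C$ in the sense of~\eqref{0.1} is the same thing as a radial function on the tree.

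Next I would invoke Proposition~\ref{equalnormsF}, which (via left invariance of the word metric together with Proposition~\ref{Gilbert0} of Bo\.zejko--Fendler and Jolissaint) gives that $\varphi$ is a completely bounded Fourier multiplier of $\Gamma$ if and only if $\tilde\varphi:\Gamma\times\Gamma\to\C$ is a Schur multiplier on the tree $\Gamma$, with $\|\varphi\|_{\MoA(\Gamma)} = \|\tilde\varphi\|_S$. This reduces the entire statement to the already-proved tree results: the equivalence of (i) and (ii) and the existence and uniqueness of $c_\pm$ and $\dot\psi$, together with the norm formula, are then read off from Theorem~\ref{maintheorem} in the case $2\le q<\infty$ and from Theorem~\ref{maintheoreminf} in the case $q=\infty$. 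Since $c_\pm$ and $\tau$ here are defined exactly as in those theorems (equivalently, as in Theorem~\ref{Theorem0.2}), the two cases combine into the single displayed formula for $\|\varphi\|_{\MoA(\Gamma)}$.

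There is essentially no serious obstacle: the content of the theorem has already been established on the tree side, and the only thing to check is that the dictionary between $\Gamma$ and its Cayley tree is compatible with all the objects appearing in the statement (radial functions, the Hankel matrix $H$ built from $\dot\varphi$, the constants $c_\pm$, and the operator $\tau$ on $\traceclass(\ell^2(\N_0))$). The one point deserving a sentence of care is that Proposition~\ref{equalnormsF} is stated for arbitrary radial functions, so no boundedness hypothesis needs to be imposed a priori; the trace-class condition on $H$ already forces the limits $\lim_n\dot\varphi(2n)$ and $\lim_n\dot\varphi(2n+1)$ to exist, exactly as in the tree setting. Hence the proof is a short two-step reduction: identify $(\Gamma,e)$ with a pointed homogeneous tree, apply Proposition~\ref{equalnormsF}, and quote Theorems~\ref{maintheorem} and~\ref{maintheoreminf}.
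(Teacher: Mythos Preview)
Your proposal is correct and matches the paper's approach exactly: the paper states Theorem~\ref{maintheoremgamma} without a separate proof, noting just before it that Proposition~\ref{equalnormsF} allows one to reformulate Theorems~\ref{maintheorem} and~\ref{maintheoreminf} as statements about completely bounded Fourier multipliers on $\Gamma$. There is nothing to add.
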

\begin{theorem}
	Consider a group $\Gamma$ of the form~\eqref{freeconvolution} with $2\leq q\leq\infty$. Let $\varphi:\Gamma\to\C$ be a radial function and let $\dot\varphi:\N_0\to\C$ be the corresponding function as in Proposition~\ref{radial}. Then $\varphi$ is a completely bounded Fourier multiplier of $\Gamma$ if and only if there exists constants $c_\pm\in\C$ and a complex Borel measure $\mu$ on $\D=\setw{z\in\C }{ |z|<1 }$ such that
	\begin{equation*}
		\dot\varphi(n)=c_++c_-(-1)^n+\int_\D z^n\dd\mu(z)\qquad(n\in\N_0)
	\end{equation*}
	and
	\begin{equation*}
		\int_\D \frac{|1-z^2|}{1-|z|^2}\dd|\mu|(z)<\infty.
	\end{equation*}
	Moreover,
	\begin{equation*}
		\|\varphi\|_{\MoA(\Gamma)}\leq|c_+|+|c_-|+\int_\D \frac{|1-z^2|}{1-|z|^2}\dd|\mu|(z)
	\end{equation*}
	and it is possible to choose $\mu$ such that
	\begin{equation*}
		|c_+|+|c_-|+\int_\D \frac{|1-z^2|}{1-|z|^2}\dd|\mu|(z)\leq\frac{8}{\pi}\frac{q+1}{q-1}\|\varphi\|_{\MoA(\Gamma)},
	\end{equation*}
	where we set $\frac{q+1}{q-1}$ equal to $1$ when $q=\infty$.
\end{theorem}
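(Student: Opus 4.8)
The plan is to reduce the statement to the results on radial Schur multipliers on homogeneous trees that are already in hand, namely Theorem~\ref{integralthm} and Remark~\ref{2.3.3}, transported to $\Gamma$ by means of Proposition~\ref{equalnormsF}.

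First I would replace the group $\Gamma$ by its Cayley graph, which by~\cite[page~16--18]{FTN:HarmonicAnalysisAndRepresentationTheoryForGroupsActingOnHhomogeneousTrees} is a homogeneous tree of degree $q+1$ with distinguished vertex $x_0=e$. By Proposition~\ref{equalnormsF}, $\varphi\in\MoA(\Gamma)$ if and only if the function $\tilde\varphi$ associated to $\varphi$ in the sense of Proposition~\ref{radial} is a Schur multiplier on this tree, and in that case $\|\varphi\|_{\MoA(\Gamma)}=\|\tilde\varphi\|_S$. So it suffices to prove the displayed claim with $\|\varphi\|_{\MoA(\Gamma)}$ replaced throughout by the Schur norm $\|\tilde\varphi\|_S$, where $\tilde\varphi$ lives on a homogeneous tree of degree $q+1$.

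For $q=\infty$ this is precisely Theorem~\ref{integralthm}, and there is nothing more to do. For $2\leq q<\infty$ I would realize the tree of degree $q+1$ as a homogeneous subtree $\X_q$ (containing $x_0$) of a homogeneous tree $\X$ of infinite degree, retaining the symbol $\tilde\varphi$ for the corresponding function on $\X\times\X$. By Corollary~\ref{sameDifference}, $\tilde\varphi$ is a Schur multiplier on $\X\times\X$ exactly when $\tilde\varphi|_{\X_q\times\X_q}$ is, and
\begin{equation*}
	\tfrac{q-1}{q+1}\,\|\tilde\varphi\|_S\leq\|\tilde\varphi|_{\X_q\times\X_q}\|_S\leq\|\tilde\varphi\|_S .
\end{equation*}
Feeding this into Theorem~\ref{integralthm}, applied to $\X$: the equivalence of $\varphi\in\MoA(\Gamma)$ with the existence of the integral representation~\eqref{intform} transfers at once, since that representation is a condition on the sequence $\dot\varphi$ alone; the upper bound follows from $\|\tilde\varphi|_{\X_q\times\X_q}\|_S\leq\|\tilde\varphi\|_S$ combined with the upper estimate in Theorem~\ref{integralthm}; and for the remaining inequality, Theorem~\ref{integralthm} furnishes a measure $\mu$ with $|c_+|+|c_-|+\int_\D\frac{|1-z^2|}{1-|z|^2}\dd|\mu|(z)\leq\frac{8}{\pi}\|\tilde\varphi\|_S$, whence $\|\tilde\varphi\|_S\leq\frac{q+1}{q-1}\|\tilde\varphi|_{\X_q\times\X_q}\|_S=\frac{q+1}{q-1}\|\varphi\|_{\MoA(\Gamma)}$ yields the bound with constant $\frac{8}{\pi}\frac{q+1}{q-1}$, reducing to $\frac{8}{\pi}$ at $q=\infty$ under the stated convention.

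The hard analytic work is not in this argument at all; it is already carried out. The variant of Peller's trace-class criterion for Hankel operators, and the explicit comparison between the $L^1(\D)$-norm and the trace-class norm, sit entirely inside Theorem~\ref{int} and Theorem~\ref{integralthm}. The only thing that needs care here is the bookkeeping: the passage from the finite-degree tree to the infinite-degree one costs the multiplicative factor $\frac{q+1}{q-1}$ (through Corollary~\ref{sameDifference}, and ultimately Lemma~\ref{HH2} together with the sharp norm formula of Theorem~\ref{maintheorem}), and this factor must be carried consistently through both estimates and specialized correctly to $1$ when $q=\infty$.
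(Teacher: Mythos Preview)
Your proposal is correct and is exactly the paper's approach: the paper states this theorem without proof, as a direct reformulation of Theorem~\ref{integralthm} and Remark~\ref{2.3.3} via Proposition~\ref{equalnormsF}, and your argument spells out precisely that deduction (including the use of Corollary~\ref{sameDifference} that underlies Remark~\ref{2.3.3}).
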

\begin{theorem}
	\label{fnthm}
	Consider a group $\Gamma$ of the form~\eqref{freeconvolution} with $2\leq q\leq\infty$. Let $\varphi:\Gamma\to\C$ be a spherical function, then $\varphi$ is a completely bounded Fourier multiplier of $\Gamma$ if and only if the eigenvalue $\eval$ corresponding to $\varphi$ is in the set
	\begin{equation*}
		\setw{\eval\in\C }{ \Re(\eval)^2+\big(\tfrac{q+1}{q-1}\big)^2\Im(\eval)^2<1 }\bigcup\seto{\pm1}.
	\end{equation*}
	The corresponding norm is given by
	\begin{equation*}
		\|\varphi\|_{\MoA(\Gamma)}=\frac{|1-\eval^2|}{1-\Re(\eval)^2-\left(\frac{q+1}{q-1}\right)^2\Im(\eval)^2}\qquad(\Re(\eval)^2+\big(\tfrac{q+1}{q-1}\big)^2\Im(\eval)^2<1)
	\end{equation*}
	and
	\begin{equation*}
		\|\varphi\|_{\MoA(\Gamma)}=1\qquad(\eval=\pm1),
	\end{equation*}
	where we set $\frac{q+1}{q-1}$ equal to $1$ when $q=\infty$.
\end{theorem}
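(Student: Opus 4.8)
The plan is to obtain this theorem as a direct translation of the tree statements Theorem~\ref{sphSchurNormEigenvalue} (for $2\leq q<\infty$) and Theorem~\ref{sphSchurNorminf} (for $q=\infty$) via Proposition~\ref{equalnormsF}. Recall from the beginning of this section that the Cayley graph of $\Gamma$ is a homogeneous tree of degree $q+1$ with canonical distinguished vertex $x_0=e$, and that under this identification the spherical functions on $\Gamma$ are exactly the spherical functions on the homogeneous tree $(\Gamma,e)$ in the sense of Definition~\ref{spherical} (when $q<\infty$) or Definition~\ref{sphericalinf} (when $q=\infty$), carrying the same eigenvalue $\eval$. In particular every spherical function $\varphi$ on $\Gamma$ is radial, so Proposition~\ref{equalnormsF} applies and yields the key identity $\|\varphi\|_{\MoA(\Gamma)}=\|\tilde\varphi\|_S$ together with the equivalence ``$\varphi\in\MoA(\Gamma)$ if and only if $\tilde\varphi$ is a Schur multiplier'', where $\tilde\varphi:\Gamma\times\Gamma\to\C$ is associated to $\varphi$ as in Proposition~\ref{radial}.

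With this in hand, the case $2\leq q<\infty$ is immediate: Theorem~\ref{sphSchurNormEigenvalue} says that $\tilde\varphi$ is a Schur multiplier precisely when $\eval\in\setw{\eval\in\C}{\Re(\eval)^2+(\tfrac{q+1}{q-1})^2\Im(\eval)^2<1}\cup\seto{\pm1}$ and records $\|\tilde\varphi\|_S$ by exactly the two displayed formulas claimed for $\|\varphi\|_{\MoA(\Gamma)}$. For $q=\infty$ I would instead invoke Theorem~\ref{sphSchurNorminf}, whose characterization is $|\eval|<1$ (together with $\seto{\pm1}$) with norm $\tfrac{|1-\eval^2|}{1-|\eval|^2}$; it then remains only to observe that, with the convention $\tfrac{q+1}{q-1}=1$ prescribed for $q=\infty$, the set $\setw{\eval\in\C}{\Re(\eval)^2+(\tfrac{q+1}{q-1})^2\Im(\eval)^2<1}$ collapses to $\setw{\eval\in\C}{|\eval|^2<1}$ and the fraction $\tfrac{|1-\eval^2|}{1-\Re(\eval)^2-(\tfrac{q+1}{q-1})^2\Im(\eval)^2}$ collapses to $\tfrac{|1-\eval^2|}{1-|\eval|^2}$, so both cases are subsumed in the single displayed statement.

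The argument carries no real obstacle; the only point worth spelling out is the identification of the group-theoretic spherical functions on $\Gamma$ with the combinatorial ones on the tree $(\Gamma,e)$ together with the matching of eigenvalues, which follows from the defining recursions~\eqref{sph1}--\eqref{sph3} (respectively~\eqref{sph1inf}--\eqref{sph3inf}) and the left invariance of the word metric on $\Gamma$, the latter being precisely the fact that makes Proposition~\ref{equalnormsF} applicable. All of the analytic substance has already been discharged in Sections~\ref{homtree} and~\ref{sphfct}, so the proof is pure bookkeeping.
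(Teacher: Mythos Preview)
Your proposal is correct and matches the paper's approach exactly: the paper presents Theorem~\ref{fnthm} (together with Theorems~\ref{maintheoremgamma} and the integral-representation theorem) as a direct reformulation of Theorems~\ref{sphSchurNormEigenvalue} and~\ref{sphSchurNorminf} via Proposition~\ref{equalnormsF}, invoking the identification of spherical functions on $\Gamma$ with spherical functions on the tree $(\Gamma,e)$, and gives no separate argument beyond that.
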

\begin{remark}
	The case $q=\infty$ and $M=0$ of Theorem~\ref{fnthm} was proved by Pytlik and Szwarc in~\cite[Corollary~4]{PS:AnAnalyticFamilyOfUniformlyBoundedRepresentationsOfFreegroups}.
\end{remark}
\begin{corollary}
	\label{MainThmCorollary}
	Consider a group $\Gamma$ of the form~\eqref{freeconvolution} with $2\leq q\leq\infty$. There is no uniform bound on the ${\MoA(\Gamma)}$-norm of the spherical functions on $(\Gamma,e)$ which are completely bounded Fourier multipliers.
\end{corollary}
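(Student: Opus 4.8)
The plan is to exhibit, for each group $\Gamma$ of the form~\eqref{freeconvolution}, an explicit sequence of spherical functions on $(\Gamma,e)$ which are completely bounded Fourier multipliers but whose $\MoA(\Gamma)$-norms tend to infinity. The tool is Theorem~\ref{fnthm}: the spherical functions that are completely bounded Fourier multipliers of $\Gamma$ are exactly those whose eigenvalue $\eval$ lies in $\setw{\eval\in\C}{\Re(\eval)^2+\big(\tfrac{q+1}{q-1}\big)^2\Im(\eval)^2<1}\cup\seto{\pm1}$, and on the open ellipse the norm equals $\tfrac{|1-\eval^2|}{1-\Re(\eval)^2-(\frac{q+1}{q-1})^2\Im(\eval)^2}$. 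The geometric point is that the numerator $\eval\mapsto|1-\eval^2|$ vanishes on the bounding ellipse only at the two points $\eval=\pm1$; at any other boundary point it stays bounded below by a positive constant, whereas the denominator can be driven to $0$ by approaching the boundary from inside. Hence the $\MoA(\Gamma)$-norm cannot be uniformly bounded.

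Concretely, first I would recall that every $\eval\in\C$ occurs as the eigenvalue of a spherical function on $(\Gamma,e)$: for $q<\infty$ this is the surjectivity of $z\mapsto\eval_z$ onto $\C$ recorded after~\eqref{sz2}, and for $q=\infty$ it is immediate from~\eqref{varphiz3}; moreover the spherical functions on $\Gamma$ are precisely those on the tree $(\Gamma,e)$. So eigenvalues may be chosen freely and Theorem~\ref{fnthm} applies directly.

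Next I would put $t_n=\tfrac{q-1}{q+1}\big(1-\tfrac1n\big)$ (with the convention $\tfrac{q-1}{q+1}=1$ when $q=\infty$) and let $\varphi_n$ be the spherical function on $\Gamma$ with eigenvalue $\eval_n=i t_n$. Since $\Re(\eval_n)=0$ and $\big(\tfrac{q+1}{q-1}\big)^2\Im(\eval_n)^2=\big(1-\tfrac1n\big)^2<1$, the eigenvalue $\eval_n$ lies strictly inside the ellipse, so each $\varphi_n$ is a completely bounded Fourier multiplier. Moreover $1-\eval_n^2=1+\big(\tfrac{q-1}{q+1}\big)^2\big(1-\tfrac1n\big)^2$ is a positive real, so Theorem~\ref{fnthm} yields
\begin{equation*}
	\|\varphi_n\|_{\MoA(\Gamma)}=\frac{1+\big(\tfrac{q-1}{q+1}\big)^2\big(1-\tfrac1n\big)^2}{1-\big(1-\tfrac1n\big)^2},
\end{equation*}
whose numerator tends to $1+\big(\tfrac{q-1}{q+1}\big)^2>0$ and whose denominator tends to $0$ as $n\to\infty$; thus $\|\varphi_n\|_{\MoA(\Gamma)}\to\infty$ and the Corollary follows.

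I do not expect a genuine obstacle. The one point deserving a line of verification is that the chosen eigenvalues lie in the \emph{open} admissibility region of Theorem~\ref{fnthm}, so that the closed-form norm applies with a strictly positive denominator that nevertheless tends to $0$; the purely imaginary choice makes this transparent, but any sequence $\eval_n$ converging to a boundary point of the ellipse other than $\pm1$ would serve just as well.
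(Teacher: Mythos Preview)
Your proposal is correct and is precisely the argument the paper has in mind: the corollary is stated without proof, as an immediate consequence of the explicit norm formula in Theorem~\ref{fnthm}, and your choice of purely imaginary eigenvalues approaching the boundary of the ellipse makes the blow-up of the norm transparent.
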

\begin{lemma}
	\label{MAG}
	Let $\varphi$ be a radial function on $\Gamma$. If
	\begin{equation*}
		\sum_{n=0}^\infty(n+1)^2|\dot\varphi(n)|^2<\infty,
	\end{equation*}
	then $\varphi\in\MA(\Gamma)$. Moreover,
	\begin{equation*}
		\|\varphi\|_{\MA(\Gamma)}\leq\big(\sum_{n=0}^\infty(n+1)^2|\dot\varphi(n)|^2\big)^{\frac{1}{2}}.
	\end{equation*}
\end{lemma}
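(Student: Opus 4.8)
The plan is to bound the operator norm of the Fourier multiplier $M_\varphi$ on the group von Neumann algebra $VN(\Gamma)$ directly on the dense $\adjoint$-subalgebra $\C[\Gamma]$ of finitely supported functions, the essential input being Haagerup's inequality on the homogeneous tree; the weight $(n+1)^2$ in the statement is exactly the square of the linear constant in that inequality, combined with Cauchy--Schwarz.

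First I would recall, via the characterisation of $\MA(\Gamma)$ in~\cite[Proposition~1.2]{DCH:MultipliersOfTheFourierAlgebrasOfSomeSimpleLieGroupsAndTheirDiscreteSubgroups} quoted above, that it suffices to prove
\begin{equation*}
\|M_\varphi(a)\|_{VN(\Gamma)}\le\Big(\sum_{n=0}^\infty(n+1)^2|\dot\varphi(n)|^2\Big)^{\tfrac{1}{2}}\,\|a\|_{VN(\Gamma)}\qquad(a\in\C[\Gamma]);
\end{equation*}
a bounded Fourier multiplier on the weak-$\adjoint$ dense $\adjoint$-subalgebra $\C[\Gamma]$ extends to a normal multiplier of $VN(\Gamma)$ of the same norm, so this gives $\varphi\in\MA(\Gamma)$ with the asserted estimate. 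For $a=\sum_g a_g\lambda(g)\in\C[\Gamma]$ write, for $n\in\N_0$, $a_n=\sum_{\dd(g,e)=n}a_g\lambda(g)$ for the part of $a$ supported on the sphere of radius $n$ about $e$ (only finitely many $a_n$ are nonzero). Since $\varphi$ is radial, $\varphi(g)=\dot\varphi(\dd(g,e))$, so $M_\varphi(a)=\sum_{n=0}^\infty\dot\varphi(n)a_n$.

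Next I would apply Haagerup's inequality: for $f\in\C[\Gamma]$ supported on $\setw{g\in\Gamma}{\dd(g,e)=n}$,
\begin{equation*}
\|\lambda(f)\|_{\linbeg(\ell^2(\Gamma))}\le(n+1)\Big(\sum_{\dd(g,e)=n}|f(g)|^2\Big)^{\tfrac{1}{2}}.
\end{equation*}
This is classical for the free groups $\F_N$, and it holds for every $\Gamma$ of the form~\eqref{freeconvolution} (including $q=\infty$), since the Cayley graph of $\Gamma$ is a homogeneous tree and the estimate is really a statement about convolution on such a tree (a sharp instance of property (RD) for free products). Granting this, $\|a_n\|_{VN(\Gamma)}\le(n+1)\|a_n\|_2$ with $\|a_n\|_2=\big(\sum_{\dd(g,e)=n}|a_g|^2\big)^{1/2}$, and the proof is finished by the triangle inequality, Cauchy--Schwarz and Parseval:
\begin{align*}
\|M_\varphi(a)\|_{VN(\Gamma)}&\le\sum_{n=0}^\infty|\dot\varphi(n)|\,\|a_n\|_{VN(\Gamma)}\le\sum_{n=0}^\infty(n+1)|\dot\varphi(n)|\,\|a_n\|_2\\
&\le\Big(\sum_{n=0}^\infty(n+1)^2|\dot\varphi(n)|^2\Big)^{\tfrac{1}{2}}\Big(\sum_{n=0}^\infty\|a_n\|_2^2\Big)^{\tfrac{1}{2}}=\Big(\sum_{n=0}^\infty(n+1)^2|\dot\varphi(n)|^2\Big)^{\tfrac{1}{2}}\,\|a\|_2,
\end{align*}
where $\|a\|_2=\|a\delta_e\|_{\ell^2(\Gamma)}\le\|a\|_{VN(\Gamma)}$.

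The hard part will be the Haagerup-inequality step --- in particular pinning down the sharp constant $n+1$ uniformly for all groups~\eqref{freeconvolution}, which one either quotes from the literature on property (RD) / homogeneous trees or re-derives by the combinatorial splitting of the convolution product on the tree. By contrast, the decomposition $M_\varphi(a)=\sum_n\dot\varphi(n)a_n$, the Cauchy--Schwarz/Parseval bookkeeping, and the extension from $\C[\Gamma]$ to $VN(\Gamma)$ are routine. It is also worth noting why this argument yields only $\MA(\Gamma)$ and not $\MoA(\Gamma)$: the step $\sum_n\|a_n\|_2^2=\|a\|_2^2\le\|a\|_{VN(\Gamma)}^2$ is special to the action on $\ell^2(\Gamma)$ and has no matrix-valued (completely bounded) counterpart --- precisely what leaves room for radial functions in $\MA(\Gamma)\setminus\MoA(\Gamma)$.
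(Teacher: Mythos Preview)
Your proof is correct and follows essentially the same route as the paper's: reduce via \cite[Proposition~1.2]{DCH:MultipliersOfTheFourierAlgebrasOfSomeSimpleLieGroupsAndTheirDiscreteSubgroups} to an operator-norm estimate on finitely supported functions, decompose into spheres, apply Haagerup's inequality with the linear constant $n+1$, then Cauchy--Schwarz and $\|a\|_2\le\|a\|_{VN(\Gamma)}$. The only substantive difference is that where you invoke property~(RD) on homogeneous trees generically, the paper pins down explicit references for the inequality~$\|\lambda(f\unit_n)\|\le(n+1)\|f\unit_n\|_2$ on all groups of the form~\eqref{freeconvolution}: \cite[Lemma~1.4]{Haa:AnExampleOfANonnuclearC*-algebraWhichHasTheMetricApproximationProperty} for $\F_N$, \cite[Theorem~5.1]{BP:HarmonicAnalysisForGroupsActingOnTrees} for finite $q$, and an inductive-limit argument for $q=\infty$.
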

\begin{proof}
	According to~\cite[Proposition~1.2]{DCH:MultipliersOfTheFourierAlgebrasOfSomeSimpleLieGroupsAndTheirDiscreteSubgroups} we have to show that $\varphi$ is bounded (which is obvious from the assumption in the lemma) and that
	\begin{equation*}
		\|\lambda(\varphi f)\|\leq\big(\sum_{n=0}^\infty(n+1)^2|\dot\varphi(n)|^2\big)^{\frac{1}{2}}\|\lambda(f)\|\qquad(f\in\ell^1(\Gamma)),
	\end{equation*}
	where $\lambda:\ell^2(\Gamma)\to\linbeg(\ell^2(\Gamma))$ is the left regular representation. Following~\cite{Haa:AnExampleOfANonnuclearC*-algebraWhichHasTheMetricApproximationProperty} we let $\unit_n$ denote the characteristic function of the set $\setw{x\in\Gamma }{ \dd(x,x_0)=n }$ for $n\in\N_0$. If $\Gamma$ is the free group $\F_N$ on $N$ generators ($2\leq N<\infty$), then by~\cite[Lemma~1.4]{Haa:AnExampleOfANonnuclearC*-algebraWhichHasTheMetricApproximationProperty}
	\begin{equation}
		\label{3.1.5}
		\|\lambda(\varphi f)\|\leq\sum_{n=0}^\infty(n+1)|\dot\varphi(n)|\|f\unit_n\|_2\qquad(f\in\cont_\cpt(\Gamma)).
	\end{equation}
	Using~\cite[Theorem~5.1]{BP:HarmonicAnalysisForGroupsActingOnTrees}, the same inequality holds for $\Gamma$ of the form~\eqref{freeconvolution} when $q<\infty$, and by a simple inductive limit argument, \eqref{3.1.5} also holds when $q=\infty$. By the Cauchy--Schwarz, inequality~\eqref{3.1.5} implies that
	\begin{eqnarray}
		\label{3.1.75}
		\|\lambda(\varphi f)\| & \leq & \big(\sum_{n=0}^\infty(n+1)^2|\dot\varphi(n)|^2\big)^{\frac{1}{2}}\big(\sum_{n=0}^\infty\|f\unit_n\|_2^2 \big)^{\frac{1}{2}}\\
		\nonumber & = & \big(\sum_{n=0}^\infty(n+1)^2|\dot\varphi(n)|^2\big)^{\frac{1}{2}}\|f\|_2\\
		\nonumber & \leq & \big(\sum_{n=0}^\infty(n+1)^2|\dot\varphi(n)|^2\big)^{\frac{1}{2}}\|\lambda(f)\|
	\end{eqnarray}
	for $f\in\cont_\cpt(\Gamma)$, because $\|f\|_2=\|\lambda(f)\delta_{x_0}\|_2\leq\|\lambda(f)\|$. Since $\cont_\cpt(\Gamma)$ is dense in $\ell^1(\Gamma)$, \eqref{3.1.75} holds for all $f\in\ell^1(\Gamma)$. This finishes the prof of the lemma.
\end{proof}
\begin{proposition}
	\label{existsradial}
	Consider a group $\Gamma$ of the form~\eqref{freeconvolution} with $2\leq q\leq\infty$. There exists a radial function $\varphi:\Gamma\to\C$ such that $\varphi$ is a Fourier multiplier of $\Gamma$, but not a completely bounded Fourier multiplier, i.e., $\varphi\in\MA(\Gamma)\setminus\MoA(\Gamma)$.
\end{proposition}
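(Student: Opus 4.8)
The plan is to pit Lemma~\ref{MAG} against Theorem~\ref{maintheoremgamma}. Lemma~\ref{MAG} yields $\varphi\in\MA(\Gamma)$ whenever $\sum_{n\ge0}(n+1)^2|\dot\varphi(n)|^2<\infty$, while Theorem~\ref{maintheoremgamma} says that $\varphi\in\MoA(\Gamma)$ if and only if the Hankel matrix $H=(h_{i,j})$ with $h_{i,j}=\dot\varphi(i+j)-\dot\varphi(i+j+2)$ is of trace class. Hence it suffices to produce one radial $\dot\varphi:\N_0\to\C$ with (a) $\sum_n(n+1)^2|\dot\varphi(n)|^2<\infty$ and (b) $H$ not of trace class; the corresponding $\varphi$ then lies in $\MA(\Gamma)\setminus\MoA(\Gamma)$, and since neither condition involves $q$, the same $\dot\varphi$ serves every $\Gamma$ of the form~\eqref{freeconvolution}.

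I would take $\dot\varphi$ lacunary: fix $k_0$ large, set $a_k=\tfrac{2^{-k}}{k}$, and put $\dot\varphi(2^k)=a_k$ for $k\ge k_0$ and $\dot\varphi(n)=0$ for every other $n$. Then $\sum_n(n+1)^2|\dot\varphi(n)|^2\le\sum_{k\ge k_0}(2^k+1)^2\tfrac{4^{-k}}{k^2}\le 4\sum_k k^{-2}<\infty$, giving (a) at once. For (b), note that $h_{i,j}$ equals $a_k$ when $i+j=2^k$, equals $-a_k$ when $i+j=2^k-2$, and vanishes otherwise, so $H$ is supported on the (pairwise disjoint, for $k\ge k_0$) antidiagonals $\{i+j=2^k\}$ and $\{i+j=2^k-2\}$.

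The heart of the proof is a divergent lower bound for $\|H\|_1$. For each $k\ge k_0$ choose intervals $R_k,C_k\subset\N_0$ of common length $m_k:=2^{k-3}$, both centred near $2^{k-1}$ and placed (via $C_k=[s_k,s_k+m_k)$ and $R_k=[2^k-s_k-m_k+1,\,2^k-s_k]$ with $s_k\approx 2^{k-1}-2^{k-4}$) so that $j\mapsto 2^k-j$ is a bijection $C_k\to R_k$. On $R_k\times C_k$ the quantity $i+j$ then ranges over an interval inside $[\,7\cdot2^{k-3},\,9\cdot2^{k-3}\,]$, which meets the support of $H$ only in the two antidiagonals attached to the single index $k$; hence, with $P_k,Q_k$ the coordinate projections onto $R_k$ and $C_k$, we get $P_kHQ_k=a_k(F_k-F_k')$, where $F_k$ is an $m_k\times m_k$ permutation (flip) matrix and $F_k'$ a partial flip of rank $m_k-2$ on a disjoint antidiagonal. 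Testing $F_k-F_k'$ against $W=F_k^{*}-F_k'^{*}$, which has operator norm $\le 2$, gives $\trace\big((F_k-F_k')W\big)=\trace(F_kF_k^{*})+\trace(F_k'F_k'^{*})=2m_k-2$ (the cross terms vanish by disjointness of supports), whence $\|P_kHQ_k\|_1=|a_k|\,\|F_k-F_k'\|_1\ge|a_k|(m_k-1)$. With $m_k=2^{k-3}$ one also checks that the $R_k$ are pairwise disjoint and the $C_k$ are pairwise disjoint, since the centres $2^{k-1}$ are spaced by $2^{k-1}$, which far exceeds the radii $\sim 2^{k-4}$. Now, for pairwise orthogonal projections $\{P_k\}$ and $\{Q_k\}$ one has $\sum_k\|P_kHQ_k\|_1\le\|H\|_1$ (a standard fact about the trace norm: compression to a block pattern is a contraction, which follows by averaging over phase unitaries $\sum_k e^{i\theta_k}P_k$ and $\sum_k e^{i\theta_k}Q_k$). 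So if $H$ were of trace class we would obtain $\sum_k|a_k|(m_k-1)\le\|H\|_1<\infty$, whereas $\sum_k|a_k|(m_k-1)\ge\sum_k\big(\tfrac{1}{8k}-\tfrac{2^{-k}}{k}\big)=\infty$. This contradiction gives (b), and with it the proposition.

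I expect the main obstacle to be precisely this lower estimate for $\|H\|_1$: one must extract an infinite amount of trace norm from a matrix whose entries are minuscule, and the mechanism is that a lacunary $\dot\varphi$ spreads its small mass over long antidiagonals on which $H$ behaves like a scalar times a flip, whose trace norm is its length rather than its entrywise magnitude; the gap between the weighted-$\ell^2$ smallness needed for Lemma~\ref{MAG} and the weighted-$\ell^1$ largeness needed to destroy trace-class-ness is exactly what leaves room for the example. (One can alternatively deduce (b) from the variant of Peller's theorem in Theorem~\ref{int}: the function $g$ attached there to $H$ is a lacunary power series whose coefficient sum $\sum_k a_k^2\,8^k=\sum_k\tfrac{2^k}{k^2}$ diverges, and for lacunary series membership in $L^1(\D)$ forces that sum to converge; but the direct estimate above uses only tools already developed in the paper.)
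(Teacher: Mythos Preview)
Your argument is correct and follows essentially the same strategy as the paper: the very same lacunary function $\dot\varphi(2^k)=\tfrac{1}{k\,2^k}$ is used, Lemma~\ref{MAG} gives $\varphi\in\MA(\Gamma)$, and non--trace-classness of $H$ is shown by extracting disjoint square blocks centred near the diagonal indices $2^{k-1}$ and summing their trace norms. The paper's version of the block estimate is a little simpler than yours: it takes symmetric blocks $E_k=\sspan\{e_i:3\cdot2^{k-3}\le i\le5\cdot2^{k-3}\}$ (so the same projection for rows and columns) and tests $P_kHP_k$ against the single antidiagonal flip, which has operator norm $1$ and picks out exactly the antidiagonal $i+j=2^k$, yielding $\|P_kHP_k\|_1\ge(2^{k-2}+1)\alpha_k\ge\tfrac{1}{4k}$ without having to keep track of the neighbouring antidiagonal $i+j=2^k-2$ or use $\|W\|\le2$.
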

\begin{proof}
	Let $\varphi:\Gamma\to\C$ be the radial function given by
	\begin{equation*}
		\dot\varphi(n)=\left\{
		\begin{array}{lll}
			\alpha_k & \mbox{if} & n=2^k\mbox{ for some }k\in\N\\
			0 & \mbox{if} & n=2^k\mbox{ for all }k\in\N,
		\end{array}
		\right.
	\end{equation*}
	where
	\begin{equation*}
		\alpha_k=\frac{1}{k\cdot2^k}\qquad(k\in\N).
	\end{equation*}
	To show that $\varphi\in\MA(\Gamma)$ use Lemma~\ref{MAG} and verify that
	\begin{equation*}
		\sum_{n=0}^\infty(n+1)^2|\dot\varphi(n)|^2=\sum_{k=1}^\infty\Big(\frac{2^k+1}{k\cdot2^k}\Big)^2\leq\frac{9}{4}\sum_{k=1}^\infty\frac{1}{k^2}=\frac{3}{8}\pi^2<\infty.
	\end{equation*}
	In order to see that $\varphi$ is not a completely bounded Fourier multiplier, we have to show that the Hankel matrix $H$ of Theorem~\ref{maintheoremgamma} is not of trace class.
	
	Assume that $H$ is of trace class. Let $\setw{e_i }{ i\in\N_0 }$ be the standard basis of $\ell^2(\N_0)$ and put
	\begin{equation*}
		E_k=\sspan\setw{e_i }{ 3\cdot2^{k-3}\leq i\leq5\cdot2^{k-3}}\qquad(k\geq3).
	\end{equation*}
	Note that $(E_k)_{k=3}^\infty$ is a sequence of mutually orthogonal subspaces of $\ell^2(\N_0)$. Let $P_k$ denote the orthogonal projection of $\ell^2(\N_0)$ onto $E_k$. Then
	\begin{equation*}
		\|H\|_1\geq\|\sum_{k=3}^\infty P_kHP_k\|_1=\sum_{k=3}^\infty \|P_kHP_k\|_1.
	\end{equation*}
	However, $\|P_kHP_k\|_1$ is the trace class norm of the $(2^{k-2}+1)\times(2^{k-2}+1)$ sub-matrix of $H$ corresponding to row and column indices $i,j$ satisfying
	\begin{equation*}
		3\cdot2^{k-3}\leq i,j\leq5\cdot2^{k-3}\qquad(i,j\in\N_0).
	\end{equation*}
	Note that all the entries $h_{i,j}$ of the non-main (or anti-) diagonal of this sub-matrix are equal to
	\begin{equation*}
		\dot\varphi(2^k)-\dot\varphi(2^k+2)=\alpha_k-0=\frac{1}{k\cdot2^k}\qquad(k\geq3).
	\end{equation*}
	Hence,
	\begin{equation*}
		\|P_kHP_k\|_1\geq\frac{2^{k-2}+1}{k\cdot2^k}\geq\frac{1}{4k}\qquad(k\geq3)
	\end{equation*}
	and therefore
	\begin{equation*}
		\sum_{k=3}^\infty\|P_kHP_k\|_1=\infty,
	\end{equation*}
	which contradicts the fact that $H$ is of trace class. Therefore $\varphi\notin\MoA(\Gamma)$.
\end{proof}

	\section{\texorpdfstring{Applications to $\PGLQ$}{Applications to PGL2(Qq)}}
\label{pgl2qq}
Let $q$ be some prime number and let $|\cdot|_q:\Q\to\R^+_0$ be the \emph{p-adic norm} (corresponding to $q$) given by
\begin{equation*}
	|0|_q=0\quad\mbox{and}\quad |q^n\frac{s}{t}|_q=q^{-n}\qquad(n\in\Z),
\end{equation*}
when $s,t\in\Z$ are not divisible by $q$. The following relations are well known:
\begin{itemize}
	\item [(i)]$|x y|_q=|x|_q|y|_q\qquad(x,y\in\Q)$.
	\item [(ii)]$|x+y|_q\leq\max\seto{|x|_q,|y|_q}\qquad(x,y\in\Q)$.
\end{itemize}
Property {\rm (ii)} is referred to as the \emph{ultrametric inequality} since it implies the triangle inequality. The \emph{p-adic metric} $\dd_q:\Q\times\Q\to\R^+_0$ (corresponding to $q$) is defined by
\begin{equation*}
	\dd_q(x,y)=|x-y|_q\qquad(x,y\in\Q).
\end{equation*}
The completion of $\Q$ in this metric is written $\padicnum$ and referred to as the \emph{p-adic numbers} (corresponding to $q$). The p-adic norm and the p-adic metric have natural extensions to $\padicnum$ and the properties {\rm (i)} and {\rm (ii)} hold for all $x,y\in\padicnum$.

We now list some standard properties of the p-adic numbers and subsets thereof (we refer to \cite[Appendix~\S1 and \S2]{FTN:HarmonicAnalysisAndRepresentationTheoryForGroupsActingOnHhomogeneousTrees} for the proofs). Let $\padicnums$ denote the group of invertible elements in $\padicnum$, that is, the non-zero p-adic numbers. Each $a\in\padicnums$ can be written uniquely as the (formal) sum
\begin{equation*}
	a=\sum_{i=k}^\infty a_i q^i\qquad(k\in\Z,\,a_i\in\{0,1,\ldots,q-1\},\,a_k\neq0),
\end{equation*}
where we note that $|a|_q=q^{-k}$.

By $\padicint$ we denote the subring of $\padicnum$ consisting of \emph{p-adic integers} (corresponding to $q$), that is, elements $a\in\padicnum$ with $|a|_q\leq1$. Let $\padicints$ denote the invertible elements in $\padicint$, i.e., $a\in\padicints$ if an only if $a\in\padicint\setminus\{0\}$ and $a^{-1}\in\padicint$. Hence, $\padicints$ is the set of p-adic numbers $a\in\padicnum$ for which $|a|_q=1$. These elements are referred to as \emph{p-adic units} (corresponding to $q$), and they obviously form a subgroup of $\padicnum$. We note that if $a$ is a p-adic unit and $n\in\Z$ then $|q^n a|_q=q^{-n}$. Therefore, $\padicnums$ is the disjoint union
\begin{equation}
	\label{QZ}
	\padicnums=\bigsqcup_{n=-\infty}^\infty q^n\padicints.
\end{equation}

Denote by $\GLQ$ the set of $2\times2$ matrices with entries from $\padicnum$ and non-zero determinant, and denote by $\SLZ$ the set of $2\times2$ matrices with entries from $\padicint$ and unit determinant. Given $A\in\GLQ$ it is a fact, which will be used frequently without further mentioning, that $A\padicint^2=\padicint^2$ if and only if $A\in\SLZ$, where $\padicint^2$ is shorthand notation for $\padicint\oplus\padicint$. Let $\PGLQ$ denote the quotient of $\GLQ$ by its center $\padicnums I$, where $I$ denotes the ($2\times2$) identity matrix. Similarly, we let $\PSLZ$ denote the quotient of $\SLZ$ by its center $\padicints I$. Let $\pi:\GLQ\to\PGLQ$ be the quotient map given by
\begin{equation*}
	\pi(A)=\padicnums A\qquad(A\in\GLQ).
\end{equation*}
We claim that the map
\begin{equation*}
	\pi(V)\mapsto\padicints V
\end{equation*}
is a well defined bijection from $\pi(\SLZ)$ to $\PSLZ$, thereby showing that these two sets are isomorphic. Henceforth, we consider $\PSLZ$ as a subset of $\PGLQ$ and we note that it is both compact and open (cf.~\cite{Mau:SphericalFunctionsOverP-adicFields.I}). The only non-trivial part is to show that the map is well defined. To this end, assume that $V,W\in\SLZ$ with $\pi(V)=\pi(W)$, which implies the existence of some $a\in\padicnums$ such that $W=aV$ and therefore that $\det(W)=a^2\det(V)$. But $V$ and $W$ both have unit determinant, so we conclude that $a^2$ is a p-adic unit from which it follows that $a$ is also a p-adic unit. This finishes the argument.

A \emph{lattice} (of $\padicnum^2$) is a set of the form
\begin{equation*}
	\padicint\vec e_1+\padicint\vec e_2,
\end{equation*}
where $\vec e_1,\vec e_2\in\padicnum^2$ form a basis for $\padicnum^2$. This set can also be written $(\vec e_1\ \vec e_2)\padicint^2$, where $(\vec e_1\ \vec e_2)$ denotes the matrix with column vectors $\vec e_1$ and $\vec e_2$. Two lattices $L,L'$ are called \emph{equivalent} (written $L\sim L'$) if there exists $a\in\padicnums$ such that $L'=aL$. Since obviously $aL=L$ for any lattice $L$ when $a\in\padicints$, one concludes from~\eqref{QZ} that two lattices $L,L'$ are equivalent if and only if there exists $n\in\Z$ such that $L'=q^nL$. Denote the set of lattices by $\cL$, and denote the set of equivalence classes of lattices by $\cL/\sim$.
\begin{lemma}
	\label{3X}
	There are natural bijective maps between the following three sets\footnote{The sets in {\it (i)} and {\it (ii)} are the sets of left cossets.}
	\begin{itemize}
		\item [(i)]$\PGLQ/\PSLZ$
		\item [(ii)]$\GLQ/\bigsqcup_{n=-\infty}^\infty q^n\SLZ$
		\item [(iii)]$\cL/\sim$.
	\end{itemize}
	More specifically, the following two maps give rise to bijections from {\it (ii)} to {\it (i)} and {\it (ii)} to {\it (iii)}, respectively:
	\begin{equation*}
		A\mapsto[\padicnums A]\quad\mbox{and}\quad A\mapsto[A\padicint^2]\qquad(A\in\GLQ),
	\end{equation*}
	where the brackets denote the corresponding equivalence classes in the quotient $\PGLQ/\PSLZ$ and $\cL/\sim$, respectively.
\end{lemma}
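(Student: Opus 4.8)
The plan is to verify that the two displayed maps of the statement descend to \emph{bijections} $\Psi\colon A\mapsto[A\padicint^2]$ from {\it (ii)} onto {\it (iii)} and $\Phi\colon A\mapsto[\padicnums A]$ from {\it (ii)} onto {\it (i)}, and then to read off the correspondence between {\it (i)} and {\it (iii)} as $\Psi\circ\Phi^{-1}$. Write $H=\bigsqcup_{n\in\Z}q^n\SLZ$. Comparing $q$-adic absolute values of determinants shows that the union is indeed disjoint, and $q^nV\cdot q^mW=q^{n+m}VW$ together with $(q^nV)^{-1}=q^{-n}V^{-1}$ (using $VW,V^{-1}\in\SLZ$) shows that $H$ is a subgroup of $\GLQ$ containing the centre $\padicnums I$; hence the left cosets in {\it (ii)} make sense. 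Two facts will be used repeatedly: the decomposition $\padicnums=\bigsqcup_{n\in\Z}q^n\padicints$ from~\eqref{QZ}, and the recorded criterion that for $B\in\GLQ$ one has $B\padicint^2=\padicint^2$ if and only if $B\in\SLZ$.

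First I would handle $\Psi$. That $A\padicint^2$ is a lattice is clear, the columns of $A$ being a basis of $\padicnum^2$. For well-definedness on left cosets, if $A'=Aq^nV$ with $n\in\Z$ and $V\in\SLZ$, then $V\padicint^2=\padicint^2$ gives $A'\padicint^2=q^nA\padicint^2\sim A\padicint^2$. Surjectivity is immediate, since every lattice is $(\vec e_1\ \vec e_2)\padicint^2$ with $(\vec e_1\ \vec e_2)\in\GLQ$. For injectivity, $[A\padicint^2]=[B\padicint^2]$ means $B\padicint^2=q^nA\padicint^2$ for some $n\in\Z$, i.e. $(q^{-n}A^{-1}B)\padicint^2=\padicint^2$; by the criterion $V:=q^{-n}A^{-1}B\in\SLZ$, so $B=Aq^nV\in AH$. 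Thus $\Psi$ induces a bijection from {\it (ii)} onto {\it (iii)}.

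Next I would handle $\Phi$. Since the quotient map $\pi\colon\GLQ\to\PGLQ$ is surjective, every left coset of $\PSLZ$ in $\PGLQ$ has the form $[\padicnums A]$, so $\Phi$ is surjective once well-definedness is checked. Under the identification of $\PSLZ$ with $\pi(\SLZ)=\{\padicnums V:V\in\SLZ\}\subseteq\PGLQ$ made earlier in the section, the coset $[\padicnums A]$ is exactly $\{\padicnums(AV):V\in\SLZ\}$; hence $[\padicnums A]=[\padicnums B]$ if and only if $B=aAV$ for some $a\in\padicnums$ and $V\in\SLZ$. Writing $a=q^nu$ with $u\in\padicints$ via~\eqref{QZ} and using $uV\in\SLZ$, this is equivalent to $B=Aq^n(uV)\in AH$; the converse implication $B\in AH\Rightarrow[\padicnums B]=[\padicnums A]$ holds by absorbing the scalar $q^n$ into the quotient defining $\PGLQ$. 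So $\Phi$ descends to a bijection from {\it (ii)} onto {\it (i)}, and $\Psi\circ\Phi^{-1}$ is the asserted bijection from {\it (i)} onto {\it (iii)}, sending the coset of $\padicnums A$ to $[A\padicint^2]$.

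The whole argument is essentially bookkeeping, and the only point that needs attention is reconciling the three distinct ``scalings'' in play: the $q$-power-plus-$\SLZ$ factor allowed by the subgroup $H$, the scalar equivalence $\sim$ on $\cL$, and the scalar quotients defining $\PGLQ$ and $\PSLZ$. All three are matched up by~\eqref{QZ} — which lets one slide a unit scalar through $\SLZ$ — and by the fixed-lattice criterion $B\padicint^2=\padicint^2\iff B\in\SLZ$; no further input is required.
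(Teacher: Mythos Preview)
Your proof is correct and follows essentially the same approach as the paper. The paper's own proof is in fact just a sketch: it declares the result elementary, leaves the details to the reader, and only indicates that the bijection {\it (i)}$\leftrightarrow${\it (ii)} amounts to checking that the kernel of the composite $\GLQ\to\PGLQ\to\PGLQ/\PSLZ$ equals $H=\bigsqcup_{n}q^n\SLZ$, and that the bijection {\it (ii)}$\leftrightarrow${\it (iii)} can be obtained by first identifying $\GLQ/\SLZ$ with $\cL$ and then passing to the quotient by scalars. Your explicit well-definedness/surjectivity/injectivity verifications carry out precisely these computations; in particular, your injectivity argument for $\Phi$ is exactly the kernel check the paper alludes to.
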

\begin{proof}
	This is elementary, and the details of the proof will be left to the reader. For the bijection between {\it (i)} and {\it (ii)}, one just have to check that the kernel of the composition of the two quotient maps:
	\begin{equation*}
		\GLQ\to\PGLQ\to\PGLQ/\PSLZ
	\end{equation*}
	is equal to $\bigsqcup_{n=-\infty}^\infty q^n\SLZ$. And for the bijection between {\it (ii)} and {\it (iii)}, one shows first that the map $A\mapsto[\padicnums A]$ gives rise to a bijection of the quotient $\GLQ/\SLZ$ onto $\cL$.
\end{proof}
Let $\X$ denote the set from Lemma~\ref{3X} and notice that $\X$ is discrete since $\PSLZ$ is open. The characterization {\it (i)} is useful since spherical functions on the Gelfand pair $(\PGLQ,\PSLZ)$ have been studied elsewhere (cf.~\cite{Mau:SphericalFunctionsOverP-adicFields.I}). The characterization {\it (iii)} is used for introducing the tree structure to $\X$ (cf.~\cite[Appendix~\S4 and \S5]{FTN:HarmonicAnalysisAndRepresentationTheoryForGroupsActingOnHhomogeneousTrees} and \cite[Chapter~II~\S1]{Ser:ArbresAmalgamesSL2}). Finally, the characterization {\it (ii)} is useful for doing actual calculations. We denote the elements of $\X$ by $\Lambda$. Unless we explicitly specify an element of $\X$, by writing up its equivalence class, we let our choice of representative reveal which of the three pictures we are working in. For instance, we let $\Lambda_0$ be the element in $\X$ which has (canonical) representatives $\padicnums I\in\PGLQ$, $I\in\GLQ$ and $I\padicint^2\in\cL$.

In the following we use the notation $\G=\PGLQ$ and $\K=\PSLZ$ ($\Lambda_0$ is $\K$ in the characterization {\it (i)}). Obviously, $\G$ induces a left action on $\X$ which is compatible with the different characterizations from Lemma~\ref{3X}, in fact, if $\Lambda\in X$ is represented by $A\in\GLQ$ and $g\in\G$ is represented by $B\in\GLQ$, then $BA\in\GLQ$ represents $g\Lambda$. It is well known (cf.~\cite[Chapter~II~\S1]{Ser:ArbresAmalgamesSL2}) that $\X$ can be interpreted as a homogeneous tree of degree $q+1$ by introducing a certain metric $\dd$ on $\X$, which will be described below.

Let $\Lambda,\Lambda'\in X$ with representatives $L,L'\in\cL$. According to \cite[Appendix Theorem~4.3 and~\S5]{FTN:HarmonicAnalysisAndRepresentationTheoryForGroupsActingOnHhomogeneousTrees} there exists linearly independent vectors $\vec e_1,\vec e_2\in\padicnum^2$ and $i,j\in\Z$ such that
\begin{equation}
	\label{5.2}
	\padicint\vec e_1+\padicint\vec e_2=L\quad\mbox{and}\quad q^i\padicint\vec e_1+q^j\padicint\vec e_2=L'.
\end{equation}
Moreover, the number $|i-j|$ only depend on $\Lambda$ and $\Lambda'$. The metric $\dd$ on $\X$, which turns $\X$ into a homogeneous tree of degree $q+1$, is given by
\begin{equation}
	\label{5.25}
	\dd(\Lambda,\Lambda')=|i-j|.
\end{equation}
In particular, $\Lambda,\Lambda'\in\X$ are connected by an edge if an only if $\dd(\Lambda,\Lambda')=1$. Note that in~\eqref{5.2} one can always assume that $i\geq j$ (by replacing $(\vec e_1,\vec e_2)$ with $(\vec e_2,\vec e_1)$ if $i<j$). In this case, \eqref{5.2} can be rewritten as
\begin{equation}
	\label{5.3}
	\padicint\vec e_1+\padicint\vec e_2=L\quad\mbox{and}\quad q^n\padicint\vec e_1+\padicint\vec e_2=q^mL',
\end{equation}
where $n=i-j=\dd(\Lambda,\Lambda')$ and $m=-j$. From this we get:
\begin{lemma}
	Let $\Lambda,\Lambda'\in X$ with representatives $A,A'\in\GLQ$. Then there exists linearly independent vectors $\vec e_1,\vec e_2\in\padicnum^2$ and $m\in\Z$ such that
	\begin{equation}
		\label{5.4}
		\padicint\vec e_1+\padicint\vec e_2=A\padicint^2\quad\mbox{and}\quad q^n\padicint\vec e_1+\padicint\vec e_2=q^mA'\padicint^2,
	\end{equation}
	where $n=\dd(\Lambda,\Lambda')$.
\end{lemma}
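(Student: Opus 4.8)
The plan is to read the statement straight off the lattice normal form~\eqref{5.3} by way of the dictionary in Lemma~\ref{3X}. \textbf{Step 1 (pass to lattices).} By the bijection from picture \textit{(ii)} to picture \textit{(iii)} in Lemma~\ref{3X}, namely $A\mapsto[A\padicint^2]$, the lattices $L:=A\padicint^2$ and $L':=A'\padicint^2$ are representatives of the classes $\Lambda$ and $\Lambda'$ in $\cL/\sim$. \textbf{Step 2 (normal form).} Apply~\eqref{5.2} to $L,L'$: there are linearly independent $\vec e_1,\vec e_2\in\padicnum^2$ and $i,j\in\Z$ with $\padicint\vec e_1+\padicint\vec e_2=L$ and $q^i\padicint\vec e_1+q^j\padicint\vec e_2=L'$, and after interchanging $\vec e_1$ and $\vec e_2$ if necessary we may assume $i\geq j$. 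Multiplying the second identity by $q^{-j}$ — which is precisely the passage to~\eqref{5.3} — gives $\padicint\vec e_1+\padicint\vec e_2=L$ and $q^n\padicint\vec e_1+\padicint\vec e_2=q^mL'$, where $n=i-j$ and $m=-j$.

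\textbf{Step 3 (identify $n$).} By~\eqref{5.25} the distance is $\dd(\Lambda,\Lambda')=|i-j|$, and since $i\geq j$ this equals $i-j=n$. Substituting $L=A\padicint^2$ and $L'=A'\padicint^2$ into the two identities produced in Step~2 yields exactly~\eqref{5.4} with this $m\in\Z$ and with $n=\dd(\Lambda,\Lambda')$, which finishes the argument.

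I do not expect any genuine obstacle here: the entire content is already packaged in the cited facts~\eqref{5.2}, the orientation remark preceding~\eqref{5.3}, the metric formula~\eqref{5.25}, and the bijections of Lemma~\ref{3X}. The only point deserving a moment's care is the orientation — ensuring that the higher power of $q$ sits in front of $\vec e_1$, so that the exponents of the coefficients are $(n,0)$ rather than $(0,n)$ — and this is harmless, since swapping $\vec e_1$ with $\vec e_2$ preserves linear independence together with all the lattice equalities; this is exactly why the reduction $i\geq j$ is recorded in the text just before~\eqref{5.3}.
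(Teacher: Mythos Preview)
Your proof is correct and follows essentially the same route as the paper's own argument: pass from $A,A'$ to the lattices $A\padicint^2,A'\padicint^2$ via Lemma~\ref{3X}, invoke~\eqref{5.2} with the orientation $i\geq j$, set $n=i-j=\dd(\Lambda,\Lambda')$ and $m=-j$, and read off~\eqref{5.4} from~\eqref{5.3}. The paper's proof is terser but identical in substance.
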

\begin{proof}
	Let $\vec e_1,\vec e_2,i,j$ be as in~\eqref{5.2} with $i\geq j$, and put $n=i-j=\dd(\Lambda,\Lambda')$ and $m=-j$. Since $A\padicint^2,A'\padicint^2$ are representatives of $\Lambda,\Lambda'$ in $\cL$, \eqref{5.4} follows immediately from~\eqref{5.3}.
\end{proof}
The action of $\G$ on $\X$ is an isometry, so we have that
\begin{equation*}
	\dd(\Lambda,\Lambda_0)=\dd(k\Lambda,\Lambda_0)\qquad(\Lambda\in\X,\,k\in\K)
\end{equation*}
because $k\Lambda_0=\Lambda_0$ for all $k\in\K$. We also have the converse.
\begin{lemma}
	\label{eqdist}
	If $\Lambda,\Lambda'\in X$ satisfy
	\begin{equation*}
		\dd(\Lambda,\Lambda_0)=\dd(\Lambda',\Lambda_0),
	\end{equation*}
	then there exists $k\in\K$ satisfying $k\Lambda=\Lambda'$.
\end{lemma}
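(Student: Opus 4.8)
The plan is to deduce the lemma from the stronger statement that $\K$ acts transitively on each ``sphere'' $\setw{\Lambda\in\X}{\dd(\Lambda,\Lambda_0)=n}$, $n\in\N_0$. Granting this, if $\dd(\Lambda,\Lambda_0)=\dd(\Lambda',\Lambda_0)=n$ I would fix any reference point $\Lambda_n$ on that sphere and write $\Lambda=k\Lambda_n$ and $\Lambda'=k'\Lambda_n$ with $k,k'\in\K$; then $\Lambda_n=k^{-1}\Lambda$, so $\Lambda'=(k'k^{-1})\Lambda$ with $k'k^{-1}\in\K$, which is exactly what is claimed. As the reference point I would take $\Lambda_n\in\X$ with representative the diagonal matrix $\mathrm{diag}(1,q^n)\in\GLQ$, equivalently the lattice $\padicint\oplus q^n\padicint$; taking $\vec e_1,\vec e_2$ to be the standard basis in~\eqref{5.2} and using~\eqref{5.25} shows at once that $\dd(\Lambda_n,\Lambda_0)=n$.

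For the transitivity, let $\Lambda\in\X$ with $\dd(\Lambda,\Lambda_0)=n$ and pick a representative $A\in\GLQ$. The preceding lemma, applied with $\Lambda'=\Lambda_0$ (so that $A'=I$), yields linearly independent $\vec e_1,\vec e_2\in\padicnum^2$ and $m\in\Z$ with
\[
\padicint\vec e_1+\padicint\vec e_2=A\padicint^2
\qquad\text{and}\qquad
q^n\padicint\vec e_1+\padicint\vec e_2=q^m\padicint^2 .
\]
I would then put $M=(\vec e_1\ \vec e_2)\in\GLQ$ and $D=\mathrm{diag}(q^n,1)$. The first relation gives $M\padicint^2=A\padicint^2$, hence $M^{-1}A\in\SLZ$, so $M$ represents $\Lambda$ as well; the second relation reads $MD\,\padicint^2=q^m\padicint^2$, hence $V:=q^{-m}MD\in\SLZ$. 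Thus $M=q^m V D^{-1}$, and since scalars are trivial in $\PGLQ$ and $D^{-1}=q^{-n}\mathrm{diag}(1,q^n)$, we get $\pi(M)=\pi(V)\,\pi(\mathrm{diag}(1,q^n))$. As $V\in\SLZ$, its image $\pi(V)$ lies in $\pi(\SLZ)=\K$, so $\Lambda=\pi(M)\K=\pi(V)\Lambda_n=k\Lambda_n$ with $k=\pi(V)\in\K$. This establishes transitivity of $\K$ on the $n$-sphere, and the lemma follows as in the first paragraph.

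The one place I expect to need care is the passage between the three pictures of $\X$ (lattices, $\GLQ$-cosets, $\PGLQ$-cosets) and, in particular, keeping track of the scalar factors $q^m$ and $q^{-n}$ when descending from $\GLQ$ to $\PGLQ=\GLQ/\padicnums I$. Once those are handled the argument is essentially formal, since the genuinely substantive step---bringing the pair of lattices $A\padicint^2$ and $\padicint^2$ into the normal form displayed above---is exactly the content of the preceding lemma, which we may invoke.
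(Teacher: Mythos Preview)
Your proof is correct and follows essentially the same approach as the paper: both invoke the preceding lemma~\eqref{5.4} to put the relevant lattices in normal form and then extract an element of $\SLZ$ from the resulting relations. The only difference is organizational---the paper applies~\eqref{5.4} to $(\Lambda_0,\Lambda)$ and $(\Lambda_0,\Lambda')$ and takes $V$ to be the change-of-basis matrix between the two resulting $\padicint$-bases of $\padicint^2$, which sends $\Lambda$ directly to $\Lambda'$, whereas you route both points through the reference vertex $\Lambda_n$; this is a harmless variation.
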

\begin{proof}
	Put $n=\dd(\Lambda,\Lambda_0)=\dd(\Lambda',\Lambda_0)$, let $A,A'\in\GLQ$ be representatives for $\Lambda,\Lambda'$ and use $I\in\GLQ$ as a representative for $\Lambda_0$. Applying~\eqref{5.4} to $\Lambda_0,\Lambda$ and $\Lambda_0,\Lambda'$, respectively, we find vectors $\vec e_1,\vec e_2,\vec f_1,\vec f_2\in\padicnum^2$ such that
	\begin{equation*}
		\padicint\vec e_1+\padicint\vec e_2=I\padicint^2=\padicint^2\quad\mbox{and}\quad q^n\padicint\vec e_1+\padicint\vec e_2=q^mA\padicint^2
	\end{equation*}
	\begin{equation*}
		\padicint\vec f_1+\padicint\vec f_2=I\padicint^2=\padicint^2\quad\mbox{and}\quad q^n\padicint\vec f_1+\padicint\vec f_2=q^{m'}A'\padicint^2,
	\end{equation*}
	for some $m,m'\in\Z$. Let $V$ denote the matrix representing the change of basis sending $\vec e_i$ to $\vec f_i$ for $i=1,2$. From the above expressions we conclude that $V\padicint^2=\padicint^2$ and therefore $V\in\SLZ$. Observe that
	\begin{equation*}
		q^{m-m'}VA\padicint^2=A'\padicint^2,
	\end{equation*}
	and conclude that $k \Lambda=\Lambda'$ when
	\begin{equation*}
		k=\padicnums V\in\K.
	\end{equation*}
\end{proof}
A function $f$ on $\G$ is called \emph{$\K$-bi-invariant} if
\begin{equation*}
	f(k g k')=f(g)\qquad(g\in\G,\,k,k'\in\K).
\end{equation*}
Using the above lemma and that $\K$ is an open subgroup of $\G$, we conclude that there is a bijective correspondence between continuous $\K$-bi-invariant functions $\varphi_{\G}$ on $\G$ and radial functions $\varphi_{\X}$ on $\X$ given by
\begin{equation}
	\label{phipsi}
	\varphi_{\G}(g)=\varphi_{\X}(g\Lambda_0)\qquad(g\in\G).
\end{equation}
\begin{lemma}
	\label{BF*}
	If $\varphi_{\G}$ and $\varphi_\X$ are related as in~\eqref{phipsi}, then
	\begin{equation*}
		\varphi_{\G}(g^{-1}g')=\tilde\varphi_{\X}(g'\Lambda_0,g\Lambda_0)\qquad(g,g'\in\G).
	\end{equation*}
\end{lemma}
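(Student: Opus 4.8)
The plan is to reduce the asserted identity to the fact---already recorded in the excerpt---that $\G$ acts on $\X$ by isometries, so that essentially no computation is needed. Write $\dot\varphi:\N_0\to\C$ for the function attached to the radial function $\varphi_\X$ by Proposition~\ref{radial}, so that $\varphi_\X(x)=\dot\varphi(\dd(x,\Lambda_0))$ and $\tilde\varphi_\X(x,y)=\dot\varphi(\dd(x,y))$ for all $x,y\in\X$. Applying \eqref{phipsi} with $g$ replaced by $g^{-1}g'$ gives
\begin{equation*}
	\varphi_\G(g^{-1}g')=\varphi_\X\big((g^{-1}g')\Lambda_0\big)=\dot\varphi\big(\dd((g^{-1}g')\Lambda_0,\Lambda_0)\big)\qquad(g,g'\in\G).
\end{equation*}

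First I would observe, using only associativity of the left $\G$-action on $\X$, that $g\cdot(g^{-1}g')\Lambda_0=g'\Lambda_0$ while $g\cdot\Lambda_0=g\Lambda_0$. Then, since the action of $\G$ on $\X$ preserves the metric $\dd$, applying $g$ to the pair of vertices $\big((g^{-1}g')\Lambda_0,\Lambda_0\big)$ yields $\dd((g^{-1}g')\Lambda_0,\Lambda_0)=\dd(g'\Lambda_0,g\Lambda_0)$. Substituting this into the display above and recalling that $\tilde\varphi_\X(g'\Lambda_0,g\Lambda_0)=\dot\varphi(\dd(g'\Lambda_0,g\Lambda_0))$ finishes the proof.

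There is no genuine obstacle here: the statement is a formal consequence of left-invariance of $\dd$, exactly parallel to the computation in the proof of Proposition~\ref{equalnormsF}. The only points requiring any care are bookkeeping the left action correctly---checking that $g$ sends $(g^{-1}g')\Lambda_0$ to $g'\Lambda_0$ and $\Lambda_0$ to $g\Lambda_0$---and invoking the already-established fact that this action is isometric; both are immediate.
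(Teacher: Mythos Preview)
Your proof is correct and follows essentially the same route as the paper's: apply \eqref{phipsi}, unwind the definitions of $\varphi_\X$ and $\tilde\varphi_\X$ via $\dot\varphi$, and use that $\G$ acts isometrically on $\X$ to pass from $\dd((g^{-1}g')\Lambda_0,\Lambda_0)$ to $\dd(g'\Lambda_0,g\Lambda_0)$. The paper's proof is simply the four-line chain of equalities you described.
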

\begin{proof}
	For $g,g'\in\G$ we find that
	\begin{eqnarray*}
		\varphi_{\G}(g^{-1}g') & = & \varphi_{\X}(g^{-1}g'\Lambda_0)\\
		& = & \dot\varphi_{\X}(\dd(g^{-1}g'\Lambda_0,\Lambda_0))\\
		& = & \dot\varphi_{\X}(\dd(g'\Lambda_0,g\Lambda_0))\\
		& = & \tilde\varphi_{\X}(g'\Lambda_0,g\Lambda_0).
	\end{eqnarray*}
\end{proof}
\begin{proposition}
	\label{equalnorms}
	Let $\varphi_{\G}$ be a continuous $\K$-bi-invariant function on $\G$, then $\varphi_{\G}$ is a completely bounded Fourier multiplier of $\G$ if and only if $\tilde\varphi_{\X}$ is a Schur multiplier. Moreover,
	\begin{equation*}
		\|\varphi_{\G}\|_{\MoA(\G)}=\|\tilde\varphi_{\X}\|_S.
	\end{equation*}
\end{proposition}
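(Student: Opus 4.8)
The plan is to obtain Proposition~\ref{equalnorms} by transporting the Gilbert-type factorizations between the two settings. Recall that Proposition~\ref{Grothendieck} characterizes Schur multipliers on $\X\times\X$ by a bounded factorization $\tilde\varphi_\X(\Lambda,\Lambda')=\ip{P(\Lambda)}{Q(\Lambda')}$ with $\|P\|_\infty\|Q\|_\infty$ controlling the Schur norm, while Proposition~\ref{Gilbert0} characterizes completely bounded Fourier multipliers of $\G$ by a bounded \emph{continuous} factorization $\varphi_\G(y^{-1}x)=\ip{P(x)}{Q(y)}$. Lemma~\ref{BF*} says precisely that $\varphi_\G(g^{-1}g')=\tilde\varphi_\X(g'\Lambda_0,g\Lambda_0)$, which is the identity that lets one pass between the two pictures. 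The one structural fact I will use repeatedly is that $\G$ acts transitively on $\X$ with $\Lambda_0$ fixed by $\K$, so $\X\cong\G/\K$, and that, $\K$ being open, $\X$ is discrete and the orbit map $g\mapsto g\Lambda_0$ is locally constant.

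First I would prove $\|\tilde\varphi_\X\|_S\le\|\varphi_\G\|_{\MoA(\G)}$. Assuming $\varphi_\G$ is a completely bounded Fourier multiplier with $\|\varphi_\G\|_{\MoA(\G)}\le k$, Proposition~\ref{Gilbert0} supplies a Hilbert space $\Hil$ and bounded continuous $P,Q:\G\to\Hil$ with $\varphi_\G(y^{-1}x)=\ip{P(x)}{Q(y)}$ and $\|P\|_\infty\|Q\|_\infty\le k$. I would then fix, for each $\Lambda\in\X$, a representative $g_\Lambda\in\G$ with $g_\Lambda\Lambda_0=\Lambda$ (possible by transitivity) and set $\bar P(\Lambda)=P(g_\Lambda)$, $\bar Q(\Lambda)=Q(g_\Lambda)$. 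Lemma~\ref{BF*} then yields $\tilde\varphi_\X(\Lambda,\Lambda')=\varphi_\G(g_{\Lambda'}^{-1}g_\Lambda)=\ip{\bar P(\Lambda)}{\bar Q(\Lambda')}$, with $\|\bar P\|_\infty\|\bar Q\|_\infty\le\|P\|_\infty\|Q\|_\infty\le k$, so Proposition~\ref{Grothendieck} gives that $\tilde\varphi_\X$ is a Schur multiplier with $\|\tilde\varphi_\X\|_S\le k$; an infimum over admissible $k$ finishes this inequality. Note that no continuity of $\bar P,\bar Q$ is required, which is fortunate because a section $\Lambda\mapsto g_\Lambda$ of $\G\to\X$ need not be continuous.

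For the reverse inequality I would start from a Schur-multiplier factorization $\tilde\varphi_\X(\Lambda,\Lambda')=\ip{P(\Lambda)}{Q(\Lambda')}$ with $\|P\|_\infty\|Q\|_\infty\le k=\|\tilde\varphi_\X\|_S$, obtained from Proposition~\ref{Grothendieck}, and pull it back along the orbit map by setting $\hat P(g)=P(g\Lambda_0)$, $\hat Q(g)=Q(g\Lambda_0)$. By Lemma~\ref{BF*}, $\varphi_\G(y^{-1}x)=\tilde\varphi_\X(x\Lambda_0,y\Lambda_0)=\ip{\hat P(x)}{\hat Q(y)}$, and clearly $\|\hat P\|_\infty\le\|P\|_\infty$, $\|\hat Q\|_\infty\le\|Q\|_\infty$. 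Since $g\mapsto g\Lambda_0$ is locally constant, $\hat P$ and $\hat Q$ are continuous, and $\varphi_\G$ is continuous by hypothesis, so Proposition~\ref{Gilbert0} applies and gives $\|\varphi_\G\|_{\MoA(\G)}\le k$. Combining the two inequalities yields both the asserted equivalence and the equality $\|\varphi_\G\|_{\MoA(\G)}=\|\tilde\varphi_\X\|_S$.

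The one point that needs care—and the only place the argument could stall—is the continuity clause of Proposition~\ref{Gilbert0}: in the second direction one must check that the pulled-back maps $\hat P,\hat Q$ are genuinely continuous on $\G$. This is exactly where openness of $\K$ enters, forcing $\X$ to be discrete and the orbit map to be locally constant; with that in hand the verification is automatic, regardless of any (non-)continuity of $P,Q$ on the discrete space $\X$. Everything else is bookkeeping with Lemma~\ref{BF*} and the two factorization theorems, plus the elementary observation that surjectivity of the orbit map makes the passages $\|\bar P\|_\infty\le\|P\|_\infty$ and $\|\hat P\|_\infty\le\|P\|_\infty$ in fact equalities, which is harmless for the norm identity.
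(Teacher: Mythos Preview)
Your proposal is correct and follows essentially the same argument as the paper: both directions are obtained by transporting Gilbert-type factorizations along the orbit map $g\mapsto g\Lambda_0$ and a section $\Lambda\mapsto g_\Lambda$, using Lemma~\ref{BF*} to match the two formulas. The paper presents the two implications in the opposite order and leaves the continuity verification implicit, whereas you spell out explicitly that openness of $\K$ makes the orbit map locally constant; otherwise the arguments coincide.
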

\begin{proof}
	Assume that $\tilde\varphi_{\X}$ is a Schur multiplier and use Proposition~\ref{Grothendieck} to find a Hilbert space $\Hil$ and bounded maps $\PX,\QX:\X\to\Hil$ such that
	\begin{equation*}
		\tilde\varphi_{\X}(x',x)=\ip{\PX(x')}{\QX(x)}\qquad(x,x'\in\X)
	\end{equation*}
	and
	\begin{equation*}
		\|\PX\|_\infty\|\QX\|_\infty=\|\tilde\varphi_{\X}\|_S.
	\end{equation*}
	Define bounded maps $\PG,\QG:\G\to\Hil$ by
	\begin{equation}
		\label{xieta}
		\PG(g')=\PX(g'\Lambda_0)\quad\mbox{and}\quad\QG(g)=\QX(g\Lambda_0)\qquad(g,g'\in\G)
	\end{equation}
	and use Lemma~\ref{BF*} to show that
	\begin{equation}
		\label{xietacalc}
		\varphi_{\G}(g^{-1}g')=\tilde\varphi_\X(g'\Lambda_0,g\Lambda_0)=\ip{\PX(g'\Lambda_0)}{\QX(g\Lambda_0)}=\ip{\PG(g')}{\QG(g)}
	\end{equation}
	for all $g,g'\in\G$. Using Proposition~\ref{Gilbert0} we conclude that $\varphi_{\G}$ is a completely bounded Fourier multiplier of $\G$, with
	\begin{equation*}
		\|\varphi_{\G}\|_{\MoA(\G)}\leq\|\PG\|_\infty\|\QG\|_\infty=\|\PX\|_\infty\|\QX\|_\infty=\|\tilde\varphi_{\X}\|_S.
	\end{equation*}
	
	Now assume that $\varphi_{\G}$ is a completely bounded Fourier multiplier of $\G$ and use Proposition~\ref{Gilbert0} to find a Hilbert space $\Hil$ and bounded maps $\PG,\QG:\G\to\Hil$ such that
	\begin{equation*}
		\varphi_{\G}(g^{-1}g')=\ip{\PG(g')}{\QG(g)}\qquad(g,g'\in G)
	\end{equation*}
	and
	\begin{equation*}
		\|\PG\|_\infty\|\QG\|_\infty=\|\varphi_{\G}\|_{\MoA(\G)}.
	\end{equation*}
	Let $\psi:\X\to\G$ be a \emph{cross section} of the map $g\mapsto g\Lambda_0$ of $\G$ onto $\X$, i.e., $\psi$ satisfies
	\begin{equation*}
		\psi(x)\Lambda_0=x\qquad(x\in\X).
	\end{equation*}
	Define bounded maps $\PX,\QX:\X\to\Hil$ by
	\begin{equation*}
		\PX(x')=\PG(\psi(x'))\quad\mbox{and}\quad \QX(x)=\QG(\psi(x))\qquad(x,x'\in\X)
	\end{equation*}
	and use Lemma~\ref{BF*} to show that
	\begin{equation*}
		\tilde\varphi_\X(x',x)=\tilde\varphi_\X(\psi(x')\Lambda_0,\psi(x)\Lambda_0)=\varphi_\G(\psi(x)^{-1}\psi(x'))=\ip{\PX(x')}{\QX(x)}
	\end{equation*}
	for all $x,x'\in\X$. Using Proposition~\ref{Grothendieck} we conclude that $\tilde\varphi_{\X}$ is a Schur multiplier, with
	\begin{equation*}
		\|\tilde\varphi_{\X}\|_S\leq\|\PX\|_\infty\|\QX\|_\infty\leq\|\PG\|_\infty\|\QG\|_\infty=\|\varphi_{\G}\|_{\MoA(\G)}.
	\end{equation*}
\end{proof}
Using Proposition~\ref{equalnorms} we obtain the following from Theorem~\ref{maintheorem}.
\begin{maintheorem}
	\label{2.5.6}
	Let $q$ be a prime number and consider the groups $\G=\PGLQ$ and $\K=\PSLZ$ and their quotient $\X=\G/\K$. Let $\varphi_{\G}:\G\to\C$ be a continuous $\K$-bi-invariant function and let $\varphi_{\X}:\X\to\C$ be the corresponding function as in~\eqref{phipsi} and $\dot\varphi_{\X}:\N_0\to\C$ the corresponding function as in Proposition~\ref{radial}. Finally, let $H=(h_{i,j})_{i,j\in\N_0}$ be the Hankel matrix given by $h_{i,j}=\dot\varphi(i+j)-\dot\varphi(i+j+2)$ for $i,j\in\N_0$. Then the following are equivalent:
	\begin{itemize}
		\item [(i)]$\varphi_{\G}$ is a completely bounded Fourier multiplier of $\G$.
		\item [(ii)]$H$ is of trace class.
	\end{itemize}
	If these two equivalent conditions are satisfied, then there exists unique constants $c_\pm\in\C$ and a unique $\dot\psi:\N_0\to\C$ such that
	\begin{equation*}
		\dot\varphi_{\X}(n)=c_++c_-(-1)^n+\dot\psi(n)\qquad(n\in\N_0)
	\end{equation*}
	and
	\begin{equation*}
		\lim_{n\to\infty}\dot\psi(n)=0.
	\end{equation*}
	Moreover,
	\begin{equation*}
		\|\varphi_{\G}\|_{\MoA(\G)}=|c_+|+|c_-|+\big(1-\tfrac{1}{q}\big)\|\big(I-\tfrac{\tau}{q}\big)^{-1}H\|_1,
	\end{equation*}
	where $\tau$ is the shift operator defined by~\eqref{tau}.
\end{maintheorem}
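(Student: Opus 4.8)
The plan is to deduce Theorem~\ref{2.5.6} directly from Theorem~\ref{maintheorem} by means of the isometric identification established in Proposition~\ref{equalnorms}. Recall from the discussion leading up to Lemma~\ref{3X} and from~\eqref{5.2}--\eqref{5.25} that, since $q$ is a prime number and hence in particular a finite integer with $2\le q<\infty$, the quotient $\X=\G/\K$ equipped with the metric $\dd$ of~\eqref{5.25} is a homogeneous tree of degree $q+1$, with distinguished vertex $\Lambda_0$ (the class of $\padicnums I$). Given the continuous $\K$-bi-invariant function $\varphi_{\G}$, the correspondence~\eqref{phipsi} produces a radial function $\varphi_{\X}$ on $(\X,\Lambda_0)$, and Proposition~\ref{radial} then yields $\dot\varphi_{\X}:\N_0\to\C$ and $\tilde\varphi_{\X}:\X\times\X\to\C$; the Hankel matrix $H$ in the statement is exactly the one attached to $\dot\varphi_{\X}$ in the sense of Theorem~\ref{maintheorem}.

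First I would invoke Proposition~\ref{equalnorms}: $\varphi_{\G}$ is a completely bounded Fourier multiplier of $\G$ if and only if $\tilde\varphi_{\X}$ is a Schur multiplier on $\X\times\X$, and in that case $\|\varphi_{\G}\|_{\MoA(\G)}=\|\tilde\varphi_{\X}\|_S$. Next I would apply Theorem~\ref{maintheorem} to the pair $(\X,\Lambda_0)$ and the radial function $\varphi_{\X}$; this gives the equivalence of {\it (i)} and {\it (ii)}, the existence and uniqueness of constants $c_\pm\in\C$ and of $\dot\psi:\N_0\to\C$ with $\dot\varphi_{\X}(n)=c_++c_-(-1)^n+\dot\psi(n)$ and $\lim_{n\to\infty}\dot\psi(n)=0$, and the formula $\|\tilde\varphi_{\X}\|_S=|c_+|+|c_-|+\big(1-\tfrac1q\big)\|\big(I-\tfrac\tau q\big)^{-1}H\|_1$. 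Combining this with the equality of norms from Proposition~\ref{equalnorms} yields the claimed value of $\|\varphi_{\G}\|_{\MoA(\G)}$ and completes the argument.

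There is no genuine obstacle here: the substance is entirely contained in Theorem~\ref{maintheorem} and Proposition~\ref{equalnorms}, and what remains is routine bookkeeping. The only points deserving (brief) care are that the distinguished vertex $x_0$ of Theorem~\ref{maintheorem} is to be taken as $\Lambda_0$; that the radial function feeding the Hankel matrix is $\dot\varphi_{\X}$ (so that ``$\dot\varphi$'' in the statement means $\dot\varphi_{\X}$); and that the primality of $q$ enters only through the fact that it forces $q$ to be a finite integer $\ge 2$, so that the finite-degree Theorem~\ref{maintheorem}, rather than Theorem~\ref{maintheoreminf}, is the one to apply.
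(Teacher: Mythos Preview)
Your proposal is correct and follows exactly the paper's own argument: the paper simply states that Theorem~\ref{2.5.6} is obtained from Theorem~\ref{maintheorem} via Proposition~\ref{equalnorms}, which is precisely the reduction you carry out.
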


We now turn to the task of finding out which spherical functions on the Gelfand pair $(\G,\K)$ are completely bounded Fourier multipliers of $\G$, and find the explicit norms. According to~\cite{GV:HarmonicAnalysisOfSphericalFunctionsOnRealReductiveGroups} a continuous $\K$-bi-invariant function $\varphi_{\G}$ on $\G$ (which is not identically zero) is a spherical function on the Gelfand pair $(\G,\K)$ if and only if
\begin{equation*}
	f_{\G}\mapsto\int_{\G}f_{\G}(g)\varphi_{\G}(g)\dd\mu(g)
\end{equation*}
is multiplicative on the convolution algebra of compactly supported continuous $\K$-bi-invariant functions on $\G$, where $\mu$ is the Haar measure on $\G$ normalized such that $\mu(\K)=1$.
\begin{proposition}
	\label{sphfctcoinside}
	If $\varphi_{\G}$ and $\varphi_{\X}$ are related as in~\eqref{phipsi}, then $\varphi_{\G}$ is a spherical function on the Gelfand pair $(\G,\K)$ if and only if $\varphi_{\X}$ is a spherical function on $(\X,\Lambda_0)$.
\end{proposition}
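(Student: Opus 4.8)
The plan is to run the proof through the Hecke algebra $\cH=\cH(\G,\K)$ of compactly supported continuous $\K$-bi-invariant functions on $\G$, with convolution $\ast$ taken relative to the Haar measure $\mu$ normalised by $\mu(\K)=1$, and to use the classical dictionary between $\cH$ and radial averaging operators on the tree $\X$. Since $\G=\PGLQ$ is unimodular and $\K=\PSLZ$ is compact and open with $\mu(\K)=1$, the map $g\mapsto g\Lambda_0$ pushes $\mu$ forward to counting measure on $\X$, so, writing $\dot f:\N_0\to\C$ for the radial companion (Proposition~\ref{radial}, applied to $\X$) of a $\K$-bi-invariant $f_\G$,
\[
	\omega(f_\G):=\int_\G f_\G(g)\varphi_\G(g)\,\dd\mu(g)=\sum_{n=0}^\infty|S_n|\,\dot f(n)\,\dot\varphi_\X(n),\qquad S_n:=\setw{x\in\X}{\dd(x,\Lambda_0)=n},
\]
with $|S_0|=1$ and $|S_n|=(q+1)q^{n-1}$ for $n\ge1$; it is Lemma~\ref{eqdist} (transitivity of $\K$ on the spheres $S_n$) that legitimises this radialisation. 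Next, $\cH$ acts on the space $\C^\X$ of all functions on $\X\cong\G/\K$: for $f_\G\in\cH$ the convolution operator $T_{f_\G}$ on $\C^\X$ has the $\G$-invariant (hence $\K$-invariant, hence radiality-preserving) kernel $(x,y)\mapsto\dot f(\dd(x,y))$, and since $(\G,\K)$ is a Gelfand pair, $\cH$ is commutative and $f_\G\mapsto T_{f_\G}$ is an algebra homomorphism. Writing $\chi_n\in\cH$ for the indicator of $\setw{g\in\G}{\dd(g\Lambda_0,\Lambda_0)=n}$, one has $\chi_0=\unit_\K$ (the convolution unit), $T_{\chi_0}=I$, $T_{\chi_1}=(q+1)L$ ($L$ the Laplace operator of Definition~\ref{spherical}), and --- from $\chi_1\ast\chi_1=\chi_2+(q+1)\chi_0$ and $\chi_1\ast\chi_n=\chi_{n+1}+q\chi_{n-1}$ ($n\ge2$) --- $\cH=\C[\chi_1]$, so the operators $T_{f_\G}$ ($f_\G\in\cH$) form the commutative algebra $\C[A]$ generated by $A:=T_{\chi_1}$. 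All of this is classical, cf.~\cite[Appendix \S\S4--5]{FTN:HarmonicAnalysisAndRepresentationTheoryForGroupsActingOnHhomogeneousTrees} and \cite{Mau:SphericalFunctionsOverP-adicFields.I}. Note finally that $(T_{f_\G}\varphi_\X)(\Lambda_0)=\omega(f_\G)$, immediate from the kernel formula and the displayed equality.

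With this dictionary both implications are formal. For $(\Leftarrow)$: if $\varphi_\X$ is spherical with $L\varphi_\X=\eval\varphi_\X$, then $\varphi_\X$ is an $A$-eigenvector, hence a joint eigenvector of $\C[A]$, say $T_{f_\G}\varphi_\X=\lambda(f_\G)\varphi_\X$ with $\lambda$ multiplicative on $\cH$; evaluating at $\Lambda_0$ and using $\varphi_\X(\Lambda_0)=1$ gives $\lambda(f_\G)=(T_{f_\G}\varphi_\X)(\Lambda_0)=\omega(f_\G)$, so $\omega$ is multiplicative, and nonzero since $\omega(\chi_0)=\varphi_\G(e)=\varphi_\X(\Lambda_0)=1$; hence $\varphi_\G$ is spherical on $(\G,\K)$.

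For $(\Rightarrow)$: if $\varphi_\G$ is spherical, then $\omega$ is multiplicative and nonzero (were $\omega\equiv0$, then $\omega(\chi_n)=|S_n|\dot\varphi_\X(n)=0$ for all $n$, forcing $\varphi_\G\equiv0$), so $\omega(\chi_0)\omega(f_\G)=\omega(\chi_0\ast f_\G)=\omega(f_\G)$ forces $\omega(\chi_0)=1$, i.e.\ $\varphi_\X(\Lambda_0)=1$. For $f_\G,h_\G\in\cH$, using $(T_{h_\G}\varphi_\X)(\Lambda_0)=\omega(h_\G)$, $T_{h_\G}T_{f_\G}=T_{h_\G\ast f_\G}$ and linearity,
\[
	\big(T_{h_\G}\big(T_{f_\G}\varphi_\X-\omega(f_\G)\varphi_\X\big)\big)(\Lambda_0)=\omega(h_\G\ast f_\G)-\omega(h_\G)\omega(f_\G)=0 ;
\]
since $T_{f_\G}\varphi_\X-\omega(f_\G)\varphi_\X$ is radial and a radial $\rho$ with $(T_{h_\G}\rho)(\Lambda_0)=\sum_n|S_n|\dot h(n)\dot\rho(n)=0$ for every $h_\G\in\cH$ must vanish (let $\dot h$ run over all finitely supported sequences), it follows that $T_{f_\G}\varphi_\X=\omega(f_\G)\varphi_\X$; with $f_\G=\chi_1$ this reads $(q+1)L\varphi_\X=\omega(\chi_1)\varphi_\X$, so $\varphi_\X$ is spherical on $(\X,\Lambda_0)$, with eigenvalue $\omega(\chi_1)/(q+1)$.

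The only genuine work is assembling the classical p-adic/tree dictionary with the correct normalisations --- that $\mu(\K)=1$ turns integration over $\G$ into summation over $\X$, that $\chi_1$ is exactly $(q+1)L$, and that $\cH=\C[\chi_1]$; after that the argument is pure algebra. (Alternatively one could invoke Godement's functional-equation characterisation of spherical functions on a Gelfand pair, cf.~\cite{GV:HarmonicAnalysisOfSphericalFunctionsOnRealReductiveGroups}, and verify via Lemma~\ref{eqdist} --- which identifies $\int_\K\varphi_\G(xky)\,\dd\mu(k)$ with a uniform sphere-average on $\X$ --- that the instance with $y$ a neighbour of $\Lambda_0$ is precisely the spherical recursion \eqref{sph1}--\eqref{sph3}; the Hecke-algebra route seems cleanest.)
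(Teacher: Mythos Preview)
Your argument is correct and takes a genuinely different route from the paper's proof. The paper does not argue abstractly through the Hecke algebra at all: instead it quotes Mautner's explicit parametrisation $(\Phi_z)_{z\in\C}$ of the spherical functions on the Gelfand pair $(\G,\K)$ (formula~\eqref{UH*} from~\cite{Mau:SphericalFunctionsOverP-adicFields.I}), writes down the explicit tree spherical functions $(\varphi_z)_{z\in\C}$ from~\eqref{varphiz2}--\eqref{hz}, checks that $\dd(\rho(y^n),\Lambda_0)=n$ via the lattice description~\eqref{5.2}--\eqref{5.25}, and then verifies by direct computation that the two closed formulas agree, i.e.\ $\Phi_z=\varphi_z\circ\rho$ for every $z$. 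Since both families exhaust the spherical functions on their respective sides, the bijection follows.

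Your Hecke-algebra proof is more conceptual and more self-contained: it does not rely on Mautner's classification, only on the standard fact that $\cH(\G,\K)$ is generated by $\chi_1$ together with the identification $T_{\chi_1}=(q+1)L$, and it would transfer unchanged to any group acting strongly transitively on a regular tree with vertex stabiliser $\K$. What the paper's computational proof buys, by contrast, is the explicit matching of parameters $\Phi_z\leftrightarrow\varphi_z$, which is exactly what is needed for the subsequent Remark (relating Mautner's $z$ to the eigenvalue $\eval_z$ in Theorem~\ref{pgl2qqthm}); your argument yields the eigenvalue as $\omega(\chi_1)/(q+1)$ but does not by itself recover Mautner's $z$-labelling.
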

\begin{proof}
	According to~\cite{Mau:SphericalFunctionsOverP-adicFields.I} the spherical functions on $(\G,\K)$ can be described in the following way:
	
	Let $\tau\in\padicnum$ be an element of order $1$, i.e., $|\tau|_q=\frac{1}{q}$. Define $y\in\G$ by
	\begin{equation*}
		y=\pi\left(\left(
		\begin{array}{cc}
			\tau & 0\\
			0 & 1
		\end{array}
		\right)\right),
	\end{equation*}
	where $\pi:\GLQ\to\PGLQ$ is the quotient map. Then
	\begin{equation*}
		\G=\bigsqcup_{n\in\N_0}\K y^n\K
	\end{equation*}
	and every spherical function $\Phi$ on $(\G,\K)$ is of the form $\Phi_z$ for a $z\in\C$, where
	\begin{equation}
		\label{UH*}
		\Phi_z(k_1y^n k_2)=\frac{q^{n(z-\tfrac{1}{2})}\big(q^{\tfrac{3}{2}+z}-q^{\tfrac{3}{2}-z}\big)-q^{-n(z-\tfrac{1}{2})}\big(q^{\tfrac{5}{2}-z}-q^{\tfrac{1}{2}+z}\big)}
		{(q+1)q^{\tfrac{n}{2}+1}\big(q^{z-\tfrac{1}{2}}-q^{\tfrac{1}{2}-z}\big)}
	\end{equation}
	for $k_1,k_2\in\K$ and $n\in\N_0$, cf.~\cite[(1.1), (2.2), (2.5) and~(8.2)]{Mau:SphericalFunctionsOverP-adicFields.I}. Note that if $\tau_1,\tau_2\in\padicnum$ satisfy $|\tau_1|_q=\frac{1}{q}$ and $|\tau_2|_q=\frac{1}{q}$ then $\tau_2=u\tau_1$ for some $u\in\padicints$. Therefore the definition of $\Phi_z$ does not depend on the choice of $\tau$. Since $|q|_q=\frac{1}{q}$ we can in the following put $\tau=q$.
	
	Let $(\varphi_z)_{z\in\C}$ be the spherical functions on the tree $\X=\G/\K$ (of degree $q+1$) given by~\eqref{varphiz2} and~\eqref{hz}. We claim that $\Phi_z$ and $\varphi_z$ are related as in~\eqref{phipsi}, i.e.,
	\begin{equation*}
		\Phi_z=\varphi_z\circ\rho\qquad(z\in\C)
	\end{equation*}
	where $\rho:\G\to\G/\K$ is the quotient map. Since $\Phi_z$ is $\K$-bi-invariant and $\varphi_z$ is radial, it is sufficient to check that
	\begin{equation*}
		\Phi_z(y^n)=\varphi_z(\rho(y^n))\qquad(n\in\N_0)
	\end{equation*}
	for $z\in\C$. Recall from Lemma~\ref{3X} that we can identify $\X=\G/\K$ with $\cL/\sim$, where the distance on $\cL/\sim$ is given by~\eqref{5.25}. Put $\Lambda_n=\rho(y^n)$ for $n\in\N_0$ considered as elements in $\cL/\sim$. Then $\Lambda_0=\rho(e)$ is the distinguished element in $\cL/\sim$ and $\Lambda_n=L_n/\sim$, where
	\begin{equation*}
		L_n=q^n\padicint\vec e_1+\padicint\vec e_2\qquad(n\in\N_0)
	\end{equation*}
	and where
	\begin{equation*}
		\vec e_1=\left(\begin{array}{c}1\\0\\\end{array}\right)\quad\mbox{and}\quad\vec e_2=\left(\begin{array}{c}0\\1\\\end{array}\right)
	\end{equation*}
	are the standard basis elements in $\padicnum^2$. In particular, $L_0=\padicint\vec e_1+\padicint\vec e_2$, so by~\eqref{5.2} and~\eqref{5.25}, $\dd(\Lambda_n,\Lambda_0)=n$. Thus, by~\eqref{varphiz2} and~\eqref{hz}
	\begin{equation}
		\label{UH**}
		\varphi_z(\rho(y^n))=f(z)q^{-n z}+f(1-z)q^{n (z-1)}\qquad(n\in\N_0),
	\end{equation}
	where
	\begin{equation*}
		f(z)=(q+1)^{-1}\frac{q^{1-z}-q^{z-1}}{q^{-z}-q^{z-1}}\qquad(z\in\C).
	\end{equation*}
	A simple computation shows that the right hand side of~\eqref{UH*} and~\eqref{UH**} coincide. Therefore,
	\begin{equation*}
		\Phi_z=\varphi_z\circ\rho\qquad(z\in\C),
	\end{equation*}
	which proves Proposition~\ref{sphfctcoinside}.
\end{proof}
Using Proposition~\ref{equalnorms} and~\ref{sphfctcoinside} and Theorem~\ref{sphSchurNormEigenvalue} we conclude the following.
\begin{theorem}
	\label{pgl2qqthm}
	Let $q$ be a prime number and consider the groups $\G=\PGLQ$ and $\K=\PSLZ$ and their quotient $\X=\G/\K$. Let $\varphi$ be a spherical function on the Gelfand pair $(\G,\K)$, then $\varphi$ is a completely bounded Fourier multiplier of $\G$ if and only if the eigenvalue $\eval$ of the corresponding spherical function on $\X$, is in the set
	\begin{equation*}
		\setw{\eval\in\C }{ \Re(\eval)^2+\big(\tfrac{q+1}{q-1}\big)^2\Im(\eval)^2<1 }\bigcup\seto{\pm1}.
	\end{equation*}
	The corresponding norm is given by
	\begin{equation*}
		\|\varphi\|_{\MoA(\G)}=\frac{|1-\eval^2|}{1-\Re(\eval)^2-\left(\frac{q+1}{q-1}\right)^2\Im(\eval)^2}\qquad(\Re(\eval)^2+\big(\tfrac{q+1}{q-1}\big)^2\Im(\eval)^2<1)
	\end{equation*}
	and
	\begin{equation*}
		\|\varphi\|_{\MoA(\G)}=1\qquad(\eval=\pm1).
	\end{equation*}
\end{theorem}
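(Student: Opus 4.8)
The plan is to deduce Theorem~\ref{pgl2qqthm} by composing three results already established in the excerpt: the identification of spherical functions under the dictionary between $\G/\K$ and the tree $\X$ (Proposition~\ref{sphfctcoinside}), the isometric identification of completely bounded Fourier multipliers of $\G$ with radial Schur multipliers on $\X$ (Proposition~\ref{equalnorms}), and the explicit description, in terms of the eigenvalue, of which spherical functions on a homogeneous tree of finite degree $q+1$ give rise to Schur multipliers, together with the closed formula for the Schur norm (Theorem~\ref{sphSchurNormEigenvalue}). Since $q$ is a prime number, in particular $2\le q<\infty$, so it is the finite-degree statement Theorem~\ref{sphSchurNormEigenvalue} that is relevant here, not the infinite-degree variant Theorem~\ref{sphSchurNorminf}.

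First I would recall that a spherical function $\varphi$ on the Gelfand pair $(\G,\K)$ is, in particular, a continuous $\K$-bi-invariant function on $\G$; hence by the discussion around~\eqref{phipsi} it corresponds to a unique radial function $\varphi_\X$ on $(\X,\Lambda_0)$, and by Proposition~\ref{sphfctcoinside} this $\varphi_\X$ is a spherical function on the tree $(\X,\Lambda_0)$. Let $\eval\in\C$ denote its eigenvalue. Next, Proposition~\ref{equalnorms} applies verbatim to $\varphi$ and gives that $\varphi$ is a completely bounded Fourier multiplier of $\G$ if and only if the associated function $\tilde\varphi_\X\colon\X\times\X\to\C$ (formed from $\varphi_\X$ as in Proposition~\ref{radial}) is a Schur multiplier, and moreover $\|\varphi\|_{\MoA(\G)}=\|\tilde\varphi_\X\|_S$ in that case.

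Finally I would invoke Theorem~\ref{sphSchurNormEigenvalue} for the spherical function $\varphi_\X$ on the homogeneous tree $(\X,\Lambda_0)$ of degree $q+1$: $\tilde\varphi_\X$ is a Schur multiplier precisely when $\eval$ lies in $\setw{\eval\in\C}{\Re(\eval)^2+(\tfrac{q+1}{q-1})^2\Im(\eval)^2<1}\cup\seto{\pm1}$, with $\|\tilde\varphi_\X\|_S$ equal to $\tfrac{|1-\eval^2|}{1-\Re(\eval)^2-(\frac{q+1}{q-1})^2\Im(\eval)^2}$ on the open elliptic region and equal to $1$ at $\eval=\pm1$. Chaining these facts with the equivalence and the norm identity from Proposition~\ref{equalnorms} yields exactly the statement of Theorem~\ref{pgl2qqthm}.

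There is no genuine obstacle here; all the substantive work has been carried out in the earlier sections, and this theorem is purely a matter of assembling them. The only points that deserve a moment's care are bookkeeping ones: verifying that $\varphi$ being a spherical function on the Gelfand pair $(\G,\K)$ really does entail the continuity and $\K$-bi-invariance needed to invoke Proposition~\ref{equalnorms}, and confirming that the parameter $\eval$ in the statement is indeed the eigenvalue of the tree spherical function $\varphi_\X$ — which is the content of the identification $\Phi_z=\varphi_z\circ\rho$ in the proof of Proposition~\ref{sphfctcoinside}, since $\varphi_z$ has eigenvalue $\eval_z$ by~\eqref{sz}, so the correspondence of spherical functions is eigenvalue-preserving.
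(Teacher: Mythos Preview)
Your proposal is correct and follows exactly the paper's own approach: the paper's proof consists of a single sentence invoking Proposition~\ref{equalnorms}, Proposition~\ref{sphfctcoinside}, and Theorem~\ref{sphSchurNormEigenvalue}, which is precisely the chain of deductions you spell out. Your additional remarks about continuity, $\K$-bi-invariance, and the eigenvalue-preserving nature of the correspondence are accurate and simply make explicit what the paper leaves implicit.
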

\begin{remark}
	It follows from~\eqref{sz}, and the proof of Proposition~\ref{sphfctcoinside}, that if $\varphi=\Phi_z$ (according to Mautner's parametrization~\eqref{UH*}), then $\eval$ in Theorem~\ref{pgl2qqthm} is given by $\eval_z=(1+\tfrac{1}{q})^{-1}(q^{-z}+q^{z-1})$.
\end{remark}
\begin{corollary}
	\label{MainThmCorollaryQ}
	Let $q$ be a prime number and consider the groups $\G=\PGLQ$ and $\K=\PSLZ$. There is no uniform bound on the ${\MoA(\G)}$-norm of the spherical functions on the Gelfand pair $(\G,\K)$ which are completely bounded Fourier multipliers.
\end{corollary}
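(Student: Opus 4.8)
The plan is to read the statement off directly from the explicit norm formula in Theorem~\ref{pgl2qqthm}, in exactly the same way that Corollary~\ref{MainThmCorollary} follows from Theorem~\ref{fnthm}. Fix the prime $q$ and set $\kappa=\tfrac{q-1}{q+1}\in(0,1)$. First I would recall, using Proposition~\ref{sphfctcoinside} together with the parametrization of spherical functions discussed in its proof, that the spherical functions on the Gelfand pair $(\G,\K)$ correspond under~\eqref{phipsi} to the spherical functions $\varphi_z$ on the tree $\X$, and that the associated eigenvalue $\eval_z=(1+\tfrac{1}{q})^{-1}(q^{-z}+q^{z-1})$ runs over all of $\C$ as $z$ runs over $\C$ (this follows from~\eqref{sz2} and the surjectivity of $\cosh$ onto $\C$). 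Consequently every $\eval\in\C$ is the eigenvalue of the tree spherical function corresponding to some spherical function on $(\G,\K)$.

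Next I would choose the one-parameter family of eigenvalues $\eval=it$ with $t\in\R$, $0<t<\kappa$, and let $\varphi^{(t)}$ be a spherical function on $(\G,\K)$ realizing it. Since $\Re(\eval)^2+\big(\tfrac{q+1}{q-1}\big)^2\Im(\eval)^2=t^2/\kappa^2<1$, Theorem~\ref{pgl2qqthm} applies and shows that $\varphi^{(t)}$ is a completely bounded Fourier multiplier of $\G$ with
\[
\|\varphi^{(t)}\|_{\MoA(\G)}=\frac{|1-(it)^2|}{1-t^2/\kappa^2}=\frac{1+t^2}{1-t^2/\kappa^2}.
\]
Letting $t\uparrow\kappa$, the numerator tends to $1+\kappa^2>0$ while the denominator tends to $0$, so $\|\varphi^{(t)}\|_{\MoA(\G)}\to\infty$, which gives the claimed absence of a uniform bound. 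Equivalently, one may take any sequence of eigenvalues inside the open ellipse $\Re(\eval)^2+\big(\tfrac{q+1}{q-1}\big)^2\Im(\eval)^2<1$ converging to a boundary point different from $\pm1$: the numerator $|1-\eval^2|$ then stays bounded away from $0$ while the denominator vanishes in the limit.

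I do not expect a genuine obstacle here: once Theorem~\ref{pgl2qqthm} is available the corollary is a one-line computation. The only mildly delicate point, which I would dispatch with a single sentence, is to confirm that the chosen eigenvalues $\{it : 0<t<\kappa\}$ are actually attained by honest spherical functions on the Gelfand pair $(\G,\K)$; this is exactly what the parametrization recalled in the first step provides.
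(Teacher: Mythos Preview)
Your proposal is correct and takes essentially the same approach as the paper: the corollary is stated there without proof, as an immediate consequence of the explicit norm formula in Theorem~\ref{pgl2qqthm}, and your argument simply spells out one concrete way to see the norms blow up.
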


	\bibliography{troelsBibliography}
	\contrib{Uffe Haagerup}{haagerup@imada.sdu.dk}\smallskip\\
Department of Mathematics and Computer Science, University of Southern Denmark, Campusvej~55, DK--5230~Odense~M, Denmark.\\

\bigskip\noindent
\contrib{Troels Steenstrup}{troelssj@imada.sdu.dk}\smallskip\\
Department of Mathematics and Computer Science, University of Southern Denmark, Campusvej~55, DK--5230~Odense~M, Denmark.\\

\bigskip\noindent
\contrib{Ryszard Szwarc}{szwarc2@gmail.com}\smallskip\\
Institute of Mathematics, University of Wroc{\l}aw, pl.\ Grunwaldzki~2/4, 50--384~Wroc{\l}aw, Poland.\smallskip\\\noindent and\smallskip\\ Institute of Mathematics and Computer Science, University of Opole, ul.\ Oleska~48, 45--052~Opole, Poland.\\

\end{document}